\newcommand{\ds}{\displaystyle}
\newtheorem{prop}{Proposition}[section]
\newtheorem{lemma}{Lemma}[section]
\newtheorem{remark}{Remark}[section]
\newtheorem{notations}{Notations}[section]
\newcommand{\R}{\mathbb R}
\newcommand{\N}{\mathbb N}
\newcommand{\Dx}{\Delta x}
\newcommand{\Dt}{\Delta t}
\def\maxd{\mathop{{\max}}\limits}
\def\sumd{\mathop{{\sum}}\limits}
\newcommand{\KK}{\eqref{jc}}
\newcommand{\Jmr}{\eqref{eq:junc1}}
\newcommand{\Jrs}{\eqref{eq:1in1_Rpb}}
\begin{document}


\title[A relaxation scheme for isentropic gas dynamics on networks]{A relaxation scheme for the equations of isentropic gas dynamics on a network with jump transmission conditions}


\author{Maya Briani}
\address{Istituto per le Applicazioni del Calcolo, Consiglio Nazionale delle Ricerche}
\email{maya.briani@cnr.it}

\author{Roberto Natalini}
\address{Istituto per le Applicazioni del Calcolo, Consiglio Nazionale delle Ricerche}
\email{roberto.natalini@cnr.it}

\author{Magali Ribot}
\address{Institut Denis Poisson, 
Universit\'e d'Orl\'eans, CNRS, Universit\'e de Tours \& Université Paris-Saclay, MaIAGE, INRAE Jouy-en-Josas }
\email{magali.ribot@univ-orleans.fr}

\begin{abstract}
In this paper we propose a new numerical scheme of relaxation type to approximate the Euler equations of isentropic gas dynamics on the arcs of a network. At the junction mass conservation and a jump transmission condition on the density are given, and a new solver is introduced to deal with both subsonic and supersonic cases. Consistency properties of the solver are proven and numerical tests are displayed to show its good performance also with respect to other possible solvers. 
\end{abstract}

\maketitle

\section{Introduction}

In this article, we consider the Euler equations of isentropic gas dynamics set on the one dimensional arcs of a network. On these arcs the Euler equations are, as usual, given by:
\begin{equation}\label{GasDynamic}
\left\{\begin{array}{l}
\ds{\partial_{t}\rho+ \partial_{x} (\rho u)=0,}\\
\ds{\partial_{t}(\rho u)+\partial_{x} (\rho u^2+ p(\rho))=0,}
\end{array}\right.
\end{equation}
where $x$ is a spatial coordinate, $t>0$ is the time,  $\rho$ is the density of a fluid, so we will assume all the time $\rho>0$, and  $q=\rho u$ its momentum.  Also,  we shall assume that  the pressure $p$ is given by  the pressure law for isentropic gases, that is to say 
\begin{equation}\label{eq:pfunct}
p(\rho) = p_0\ \rho^\gamma \text{ with } \gamma>1.
\end{equation}
Usually, for isentropic fluids, the value of $\gamma$ is taken such that $1<\gamma<5/3$. However, here, we have in mind various applications, and in particular to biology, see for instance \cite{preziosi1, preziosi2, NRT}, where the value of $\gamma$ is not clearly defined, so in the following we consider all the values  $\gamma>1$.

While on the external boundary of the network we can assign, for instance, standard no-flux conditions to the gas, at the junction we have to impose some physically motivated transmission conditions. In this paper we  address a special class of transmission conditions that, on a 2-arc network -- i.e.: a network composed by just two arcs which meet at just one junction point, see Figure\ref{fig:network_2arcs} -- at the junction point \(x\) reads as: 
\begin{equation}\label{KK1}
(\rho u)_{\ell} (t, x_{\ell})=(\rho u)_{r}(t, x_r)= \kappa (\rho_{\ell}(t, x_{\ell})-\rho_{r}(t, x_r)), \text{ for all time } t>0,
\end{equation}
where the subscripts \({\ell}\) and \(r\) stand respectively for the left and right side of the junction point and \(\kappa\) is a given positive constant. In the following we shall refer to these conditions as the Jump Transmission Condition (JTC). 

 In fact, (JTC) implies that the density flux incoming at the junction is equal to the one outgoing and so the total mass is conserved at the junction. This is similar to the Kirchoff’s law for an electrical circuit. This feature is common to most of the models of flows on networks. However almost all of these models impose   continuity condition for the density at the junction, as assumed for instance in \cite{DM, CG, DZ, VZ}, both for parabolic and hyperbolic models, see also \cite{garavello_review, BCGHP14} for some informed reviews in the hyperbolic case. This is the point where 
 our condition (JTC), given by \eqref{KK1}, differs, since it allows jump discontinuities at the junction.

Transmission conditions as those appearing in \eqref{KK1} were introduced, only for diffusive problems, in the simple case of two regions by Kedem and Katchalsky in \cite{KK}, as permeability conditions in the description of passive transport through a biological membrane, see also \cite{QVZ, CZ, CN} for various mathematical and related formulations. More recently they were generalized to hyperbolic problems and proposed as transmission conditions for hyperbolic-parabolic and hyperbolic-elliptic 
systems describing biological situations such as the movement of microorganisms on networks driven by chemotaxis \cite{GN15, GN21, G1,G2,GPS}, see also \cite{BNR14,BN18,BBN21} for other models and the numerical treatment of these conditions in related situations.

In general, these density jump conditions like (JTC) seem to be more appropriate than continuity conditions when dealing with mathematical models for movements of individuals, as in traffic flows \cite{GP, BNP06, GNPT07}  or in  biological phenomena involving bacteria, eukariotic cells, or other microorganism movements \cite{BNR14,GN15}, where, even if the flux is continuous and the mass is conserved, discontinuities  at the inner nodes  for the density functions are expected, as a sort of a mathematical counterpart of bottleneck phenomena.

The purpose of this article is to propose a numerical scheme for this system, based on the vector BGK approach \cite{AN2000, Bou04}, which guarantees that condition \eqref{KK1} is fulfilled at each junction of the network without solving the Riemann problem. Actually, we propose a way to solve the (JTC) relation with a  scheme  which also satisfies a momentum equality junction condition, namely  the fact that at the discrete level the equality of the momenta  $q=\rho u$ will be enforced both as the unknown of the second equation of system \eqref{GasDynamic} and as the flux of the first equation of system \eqref{GasDynamic}. 

This approach, as long as it is possible, will be compared with a more classical solver based on the Riemann invariants at the junction, which in the following will be referred as "Riemann invariant junction condition". Unfortunately, as we will discuss in this paper, it is not clear how to deal with this approach in the general "non-subsonic" case, so the comparison will be restricted to the situations where both approaches work. 

The article is organized as follows~: in Section~\ref{2-arcs_case} we study the mathematical formulation of equations \eqref{GasDynamic} on a 2-arc network, with the conditions \eqref{KK1} at the junction.  In Section~\ref{RIJunctionCondition}, we study how to solve the Riemann problem for \eqref{GasDynamic} on a 2-arc network, with the conditions \eqref{KK1} at the junction using the classical approach with the Riemann invariants. We are able to find a solution in the subsonic case, and for some special initial conditions in the supersonic case. 

Then, in Section~\ref{DEJunctionCondition}, we introduce our relaxation scheme for the same problem, which can be written for all initial data. In Section~\ref{NumericalTests}, we present some numerical tests, and in particular a comparison between the relaxation scheme and the cases where we have an explicit solution for the Riemann problem at the junction. Finally we extend the previous study to the case of a general network in Section~\ref{Extensions}.

\section{Isentropic gas dynamics on a 2-arc network}\label{2-arcs_case}
Consider the  1D system \eqref{GasDynamic}
on a simple network, composed of two arcs, as displayed in Figure \ref{fig:network_2arcs}. The quantities linked to the arc on the left, called arc $\ell$ (resp. on the right, called arc $r$ ) are denoted with subscript $\ell$ (resp. $r$).
\begin{figure}[htbp!]
\begin{center}
\begin{tikzpicture}
\draw  (-3,0) -- (3,0) ;
\draw (-3,0) -- (-1.5,0) ;
\draw (0,0) -- (1.5,0) ;
\draw (0,0) node {$\bullet$} ;
\draw (-.1,0) node[below]{$L_{\ell}$} ;
\draw (.2,0) node[above]{$0$} ;
\draw (-3,0) node {$\bullet$} ;
\draw (-3,0) node[below]{$0$} ;
\draw (3,0) node {$\bullet$} ;
\draw (3,0) node[above]{$L_{r}$} ;
\draw (-1.5,0.3) node[above]{arc $\ell$} ;
\draw (1.5,0.3) node[above]{arc $r$} ;
\draw (0.,0.8) node[above]{Junction point} ;
\draw[-{Latex[length=2mm]}] (0,0.8) -- (0,0.1) ;
\end{tikzpicture}
\end{center}
\caption{The 2-arc network : the quantities linked to the arc $[0,L_{\ell}]$ on the left (resp. $[0,L_{r}]$ on the right) are denoted with subscript $\ell$ (resp. $r$)} 
\label{fig:network_2arcs}
\end{figure}
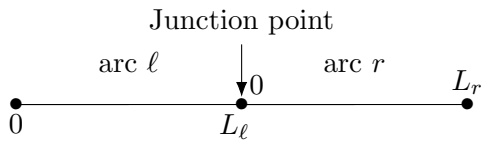

We are therefore dealing with the two following systems~:
\begin{equation}\label{GasDynamicLeft}
\left\{\begin{array}{l}
\ds{\partial_{t}\rho_{\ell}(t,x)+ \partial_{x} (\rho_{\ell} u_{\ell})(t,x)=0,}\\
\ds{\partial_{t}(\rho_{\ell} u_{\ell})(t,x)+\partial_{x} (\rho_{\ell} u_{\ell}^2+ p(\rho_{\ell}))(t,x)=0,}
\end{array}\right. \text{ for } x \in [0,L_{\ell}],
\end{equation}
and
\begin{equation}\label{GasDynamicRight}
\left\{\begin{array}{l}
\ds{\partial_{t}\rho_{r}(t,x)+ \partial_{x} (\rho_{r} u_{r})(t,x)=0,}\\
\ds{\partial_{t}(\rho_{r} u_{r})(t,x)+\partial_{x} (\rho_{r} u_{r}^2+ p(\rho_{r}))(t,x)=0,}
\end{array}\right. \text{ for } x \in [0,L_{r}],
\end{equation}
complemented with no-flux boundary conditions, at least for smooth solutions, at the external points of the two intervals, namely, always setting for the flux $q=\rho u$ in the following~:
\begin{equation}\label{bc}
\left\{\begin{array}{l}
\partial_{x} \rho_{\ell} (t, 0)=0, \quad q_{\ell} (t, 0)=0, \text{ for all } t \geq 0 \text{ on the left arc,}
\\
 \partial_{x} \rho_{r} (t, L_{r})=0 ,\quad q_{r}(t, L_{r}) =0, \text{ for all } t \geq 0 \text{ on the right arc}.
 \end{array}\right.
 \end{equation}
 Clearly the boundary conditions, as usual for hyperbolic problems, have to be intended in a proper sense, since sometimes, due to the direction of the characteristics with respect to the boundary, they need to be interpreted, see \cite{serre} and references therein for more details.
 
Finally, we define transmission conditions at the junction, which couple the unknowns of both systems. For that purpose, we consider the equality of  fluxes in order to have the mass conservation at the junction, i.e.~:
\begin{equation}\label{jcegal}
q_{\ell} (t, L_{\ell})=q_{r}(t, 0), \text{ for all } t \geq 0 .
\end{equation}
We supplement this condition with a jump transmission  condition (JTC) for the density at the junction, that is to say~:
\begin{equation}\label{jc}
q_{\ell} (t, L_{\ell})=q_{r}(t, 0)= \kappa (\rho_{\ell}(t, L_{\ell})-\rho_{r}(t, 0)), \text{ for all } t \geq 0,
\end{equation}
where $\kappa$ is a {\it permeability coefficient } that we suppose to be constant. 
These previous conditions represent some transport conditions through a jump (given by a membrane or a change of domain), and  the fluxes are linked with the difference of densities on both sides of the jump.

 This condition (JTC)  is the hyperbolic analogous of the parabolic Kedem-Katchalsky transmission condition, see for instance \cite{KK,QVZ,CZ,CN}. The connection between condition \eqref{jc} and the Kedem-Katchalsky condition can be seen in the diffusive limit of the damped version of equation \eqref{GasDynamic}. By scaling this equation as in \cite{DiFM02}, we obtain the perturbed system 
\begin{equation*}
\left\{\begin{array}{l}
\ds{\partial_{t}\rho+ \partial_{x} (\rho u)=0,}\\
\ds{\partial_{t}(\epsilon^2\rho u)+\partial_{x} (\epsilon^2\rho u^2+ p(\rho))=-\rho u,}
\end{array}\right.
\end{equation*}
where the inertial terms $\partial_{t}(\rho u)+\partial_{x} (\rho u^2)$ are supposed to be negligible. 

So, as $\epsilon$ tends to zero, the momentum $\rho u$ approaches $-\partial_x p(\rho)$ and, as shown in \cite{DiFM02},  we obtain the porous medium equation
\begin{equation*}
\partial_{t}\rho- \partial_{xx} p(\rho)=0,
\end{equation*}
see also \cite{NRT2} in the framework of asymptotic preserving numerical schemes for isentropic Euler equations coupled with chemotaxis. 

Replacing $q=\rho u$ by $-\partial_x p(\rho)$  in \eqref{jc}, we obtain the condition
\begin{equation*}
-\partial_x p(\rho_{\ell})  (t, L_{\ell})=-\partial_x p(\rho_{r})(t, 0) = \kappa (\rho_{\ell} (t, L_{\ell})-\rho_{r}(t, 0))
\end{equation*}
which is in the form of the diffusive Kedem-Katchalsky condition treated for instance in \cite{CGLP, CDP24}.

Clearly, condition  \eqref{jcegal} together with condition \eqref{bc} ensure the mass conservation of the system, that is to say, working with smooth solutions~:
\begin{equation}\label{consmass2}
 \frac{d}{dt}m (t)=0, \text{ with } m(t)= \int_{0}^{L_{\ell}} \rho_{\ell}(t, x) \, dx+\int_{0}^{L_{r}} \rho_{r}(t, x) \, dx.
  \end{equation}

\subsection{Entropy dissipation}\label{sec:entropy}
Let us  prove now that, at least for smooth solutions,  system \eqref{GasDynamicLeft}-\eqref{jc} is endowed with an entropy dissipation property at the continuous level under the condition that the density jump is small enough at the junction.

First, on an interval, there exists at least an entropy--entropy flux pair $(\eta,G)$ for system \eqref{GasDynamic},  see \cite{Dafermos} p.212, namely the functions 
\begin{equation}\label{eq:entropy_pair}
 \eta(\rho, \rho u)=\frac 1 2 \rho u^2+ \frac{p_{0}}{\gamma-1} \rho^\gamma \text{ and }G(\rho, \rho u)=\left(\frac 1 2 \rho u^2+ \frac{p_{0} \gamma}{\gamma-1} \rho^\gamma\right)u.
\end{equation}
The function $\eta$ is convex and, for smooth solutions $(\rho,q= \rho u)$ to system \eqref{GasDynamic} we have
\begin{equation*}
\partial_{t} \eta(\rho, q)+\partial_{x} G(\rho, q) = 0.
\end{equation*}
For weak solutions, the admissible (entropy) solutions have to verify (in the weak sense) the inequality
\begin{equation}\label{eq:entropy_inequ}
\partial_{t} \eta(\rho, q)+\partial_{x} G(\rho, q) \leq 0,
\end{equation}
and we can say that the admissible solutions dissipate the entropy (see again \cite{Dafermos}). In general, for one single interval $[0,L]$ with no flux conditions, the quantity 
 \[\int_{0}^{L} \eta(\rho(t,x), q (t,x))dx	\]
is decreasing in time.

Now, we want to investigate if condition (JTC) in \eqref{jc} preserves this dissipation property, at least in some range of solutions. Let us consider smooth solutions to equation  \eqref{eq:entropy_inequ} on the $2$ arcs of the previous  network and integrate it with respect to time and space~: 
\begin{equation*}
\begin{aligned}
S(t):= \int_{0}^{L_{\ell}}& \eta(\rho_{\ell}(t,x), q_{\ell} (t,x))dx+ \int_{0}^{L_{r}} \eta(\rho_{r}(t,x), q_{r} (t,x))dx  \\
&\leq  \int_{0}^{L_{\ell}} \eta(\rho_{\ell}(0,x), q_{\ell} (0,x))dx+ \int_{0}^{L_{r}} \eta(\rho_{r}(0,x), q_{r} (0,x))dx \\
 &-\int_{0}^t  \left(G (\rho_{\ell}(s,L_{\ell}), q_{\ell} (s,L_{\ell}))-G (\rho_{\ell}(s,0), q_{\ell} (s,0))\right) ds\\
 &
 -\int_{0}^t \left(G (\rho_{r}(s,L_{r}), q_{r} (s,L_{r}))-G (\rho_{r}(s,0), q_{r} (s,0))\right)ds. 
\end{aligned}
\end{equation*}

Thanks to boundary conditions on the external domain, we find that since $\gamma>1$, we have 
\[\ds G (\rho_{r}(s,L_{r}), q_{r} (s,L_{r}))=G (\rho_{\ell}(s,0), q_{\ell} (s,0))=0,\] which leads to the inequality~:
\begin{equation}\label{eq:entropy_inequ_estimate}
\begin{aligned}
 S(t)=\int_{0}^{L_{\ell}}& \eta(\rho_{\ell}(t,x), q_{\ell} (t,x))dx+ \int_{0}^{L_{r}} \eta(\rho_{r}(t,x), q_{r} (t,x))dx  \\
&\leq  \int_{0}^{L_{\ell}} \eta(\rho_{\ell}(0,x), q_{\ell} (0,x))dx+ \int_{0}^{L_{r}} \eta(\rho_{r}(0,x), q_{r} (0,x))dx \\
 &-\int_{0}^t  \left(G (\rho_{\ell}(s,L_{\ell}), q_{\ell} (s,L_{\ell}))-G (\rho_{r}(s,0), q_{r} (s,0))\right)ds. 
\end{aligned}
\end{equation}
Therefore, if the quantity $\ds \Delta G(t):= G (\rho_{\ell}(t,L_{\ell}), q_{\ell} (t,L_{\ell}))-G (\rho_{r}(t,0), q_{r} (t,0)) \geq 0$ for all $t>0$, we have entropy dissipation, that is to say  for all $t>0$ it holds
\begin{equation*}
S(t)\leq S(0)
\end{equation*}
for all $t>0$.

Now, using the conditions at the junction, we can compute $\ds  \Delta G$
 as follows. Using that $\rho>0$, we rewrite $G(\rho, q)$ as~: $$ G(\rho, q)=G(\rho, \rho u)=\frac 1 2 \rho u^3+ \frac{p_{0} \gamma}{\gamma-1} \rho^\gamma u =\frac 1 2 \frac{q^3}{\rho^2}+ \frac{p_{0} \gamma}{\gamma-1} \rho^{\gamma-1} q $$
and therefore 
\begin{equation*}
\begin{aligned}
 &\Delta G(t)= G (\rho_{\ell}(t,L_{\ell}), q_{\ell} (t,L_{\ell}))-G (\rho_{r}(t,0), q_{r} (t,0))\\
& = \frac 1 2 \left(\frac{(q_{\ell} (t,L_{\ell}))^3}{(\rho_{\ell}(t,L_{\ell}))^2}-\frac{(q_{r} (t,0))^3}{(\rho_{r}(t,0))^2}\right)+ \frac{p_{0} \gamma}{\gamma-1} \left((\rho_{\ell}(t,L_{\ell}))^{\gamma-1} q_{\ell} (t,L_{\ell})-(\rho_{r}(t,0))^{\gamma-1} q_{r} (t,0) \right)
\end{aligned}
\end{equation*}
Using the conditions \eqref{jc} at the junction, the previous expression can be simplified as~:
\begin{equation*}
\begin{aligned}
\Delta G(t)= G (\rho_{\ell}(t,L_{\ell}), q_{\ell} (t,L_{\ell}))&-G (\rho_{r}(t,0), q_{r} (t,0))\\
 = -\frac 1 2 \kappa^3 &(\rho_{\ell}(t,L_{\ell})-\rho_{r}(t,0))^4\frac{\rho_{r}(t,0)+\rho_{\ell}(t,L_{\ell})}{(\rho_{r}(t,0))^2(\rho_{\ell}(t,L_{\ell}))^2}\\
&
+ \frac{p_{0} \gamma}{\gamma-1} \kappa (\rho_{\ell}(t,L_{\ell})-\rho_{r}(t,0))\left((\rho_{\ell}(t,L_{\ell}))^{\gamma-1}-(\rho_{r}(t,0))^{\gamma-1}  \right).
\end{aligned}
\end{equation*}

The first term on the right hand-side is negative whereas the second one is positive. 
We remark that as long as the difference $\rho_{\ell}(t,L_{\ell})-\rho_{r}(t,0)$ is small, 
\begin{equation*}
\Delta G(t)\sim  \frac{p_{0} \gamma}{\gamma-1} \kappa (\rho_{\ell}(t,L_{\ell})-\rho_{r}(t,0))\left((\rho_{\ell}(t,L_{\ell}))^{\gamma-1}-(\rho_{r}(t,0))^{\gamma-1}  \right) \geq 0
\end{equation*}
and the dissipation entropy property holds, 
whereas as the difference $\rho_{\ell}(t,L_{\ell})-\rho_{r}(t,0)$ becomes large, we have
\begin{equation*}
\Delta G(t)\sim -\frac 1 2 \kappa^3 (\rho_{\ell}(t,L_{\ell})-\rho_{r}(t,0))^4\frac{\rho_{r}(t,0)+\rho_{\ell}(t,L_{\ell})}{(\rho_{r}(t,0))^2(\rho_{\ell}(t,L_{\ell}))^2} \leq 0,
\end{equation*}
and then, a priori, the dissipation property is no longer true.
In Section \ref{NumericalTests} we will test this dissipation property more closely, for various initial data, since in some cases the dissipation inside the arcs can compensate the growth of the entropy at the junction.

\section{The Riemann problem at the junction - the case of a 2-arc network } \label{RIJunctionCondition}

We want to solve the Riemann problem for \eqref{GasDynamicLeft}--\eqref{jc} at the junction using the classical approach with  Riemann invariants. 

\subsection{The Riemann problem on a single interval} 
Let us first describe Riemann invariants on an interval before generalizing it to the junction case, see for example \cite{Dafermos} for more details. 

Let consider isentropic gas dynamics given by Eq.\eqref{GasDynamic} on a single arc, under the form
 \begin{equation}\label{eq:GenSys}
\partial_{t} U+\partial_{x}F(U)=0 , 
\end{equation}
where $U \in \R^2$ is the vector of unknowns 
$\ds
U=\left( \begin{array}{c}  \rho \\ q=\rho u\end{array}\right)$,
and the function $F : \R^2 \to \R^2$ is the flux of the system defined by
$\ds F(U)=\left( \begin{array}{c}  \rho u \\  \rho u^2 +p(\rho)\end{array}\right).
$

The eigenvalues of the system are
\begin{equation*}
\mu_1(\rho,q) = \frac{q}{\rho}-\sqrt{p^\prime(\rho)}, \quad \mu_2(\rho,q) = \frac{q}{\rho}+\sqrt{p^\prime(\rho)},
\end{equation*}
where $q=\rho\,u$, and the eigenvectors are
\begin{equation*}
r_1(\rho,q) = \left(\begin{array}{c}
\rho \\ q-\rho\sqrt{p^\prime(\rho)}
\end{array}\right), \quad 
r_2(\rho,q) = \left(\begin{array}{c}
\rho \\ q+\rho\sqrt{p^\prime(\rho)}
\end{array}\right).
\end{equation*}
The system is strictly hyperbolic in $\mathcal{D}=\{(\rho,q) : \rho> 0 \mbox{ or } q=0\}$ with $\mu_1<\mu_2$. It is genuinely nonlinear in both characteristic fields, i.e. $\nabla\mu_i \cdot r_i \neq 0$, i=1,2.
The two functions $z_1 = z_1(\rho, q)$, $z_2 = z_2(\rho, q)$ 
are called the \textit{Riemann invariants} of system \eqref{GasDynamic} corresponding to $\mu_1$ and $\mu_2$ if they satisfy the equations
\begin{equation}\label{eq:gasRiemannInv}
\nabla z_i \cdot r_i = 0, \quad i=1,2.
\end{equation}
One solution of \eqref{eq:gasRiemannInv}, when $p$ follows the pressure law of isentropic gases \eqref{eq:pfunct} is
\begin{equation}\label{eq:solRinv}
z_1(\rho, q) = \frac{q}{\rho} + \alpha_\gamma \sqrt{p^\prime(\rho)} , \quad z_2(\rho, q) = \frac{q}{\rho} - \alpha_\gamma\sqrt{p^\prime(\rho)}, \quad  \text{ with }\alpha_\gamma = \frac{2}{\gamma-1}.
\end{equation}

Consider now the Riemann problem for \eqref{eq:GenSys}, that is to say system \eqref{eq:GenSys}   complemented with piecewise constant initial data such that 
\begin{align*}
U(x,0) = \left\{\begin{array}{lcl}
U^- &\mbox{if}& x< 0,
\\
U^+ &\mbox{if}& x> 0,
\end{array}\right.
\end{align*}
for two given constant left and right states $U^-=(\rho^-,q^-)$ and $U^+=(\rho^+,q^+)$.
The solution to the Riemann problem is then explicitly known and defined by an intermediate state $\hat U = (\hat\rho,\hat q)$ verifying the following conditions:
\begin{itemize}
\item[i)] The left state $U^-$ is connected to $\hat U$ by a 1-shock or a 1-rarefaction wave such that
$$z_1(\rho^-,q^-) = z_1(\hat\rho,\hat q).$$
Moreover, shock waves satisfy $\mu_1(U^-)> \mu_1(\hat U)$ (Lax admissibility condition) while rarefaction waves satisfy $\mu_1(U^-)< \mu_1(\hat U)$. We then have a 1-shock if $\hat \rho > \rho^-$ and $z_1(\rho^-,q^-) = z_1(\hat\rho,\hat q)$ or a 1-rarefaction if $\hat\rho < \rho^-$ and $z_1(\rho^-,q^-) = z_1(\hat\rho,\hat q)$.
\item[ii)] The intermediate state $\hat U$ is connected to the right state $U^+$ by a 2-shock 
if $\hat\rho > \rho^+$ and $z_2(\hat\rho,\hat q) = z_2(\rho^+,q^+)$ or by a 2-rarefaction if $\hat\rho < \rho^+$ and $z_2(\hat\rho,\hat q) = z_2(\rho^+,q^+)$.
\end{itemize}

The intermediate state $\hat U$ is then uniquely defined if, in the $(\rho,u)$-plane, the following two curves $u_1(\rho)= u^- + \alpha_\gamma \left(\sqrt{p^\prime(\rho^-)} - \sqrt{p^\prime(\rho)}\right)$ and $u_2(\rho)= u^+ - \alpha_\gamma \left(\sqrt{p^\prime(\rho^+)}-\sqrt{p^\prime(\rho)}\right)$ intersect. Since $u_1(\rho)$ is strictly decreasing, unbounded, and starts at $u^- + \alpha_\gamma \sqrt{p^\prime(\rho^-)}$ and $u_2(\rho)$ is strictly increasing, unbounded, with minimum $u^+ - \alpha_\gamma \sqrt{p^\prime(\rho^+)}$, the Riemann problem has a unique solution in the region where
\begin{equation*}
u^- + \alpha_\gamma \sqrt{p^\prime(\rho^-)} \geq u^+ - \alpha_\gamma \sqrt{p^\prime(\rho^+)}
\end{equation*}
or equivalently
\begin{equation}\label{eq:condRP}
z_1(\rho^-,q^-) \geq z_2(\rho^+,q^+).
\end{equation}

Furthermore, we say that a state $(\rho,q)\in\mathcal{D}\backslash\{(0,0)\}$ is \emph{subsonic} if $\mu_1(\rho,q) < 0 <\mu_2(\rho,q)$, \emph{sonic} if $\mu_1(\rho,q) =0$ or $ \mu_2(\rho,q) = 0$ and \emph{supersonic} if $\mu_1(\rho,q) < \mu_2(\rho,q)<0 \mbox{ or } 0 <\mu_1(\rho,q) < \mu_2(\rho,q)$.


\subsection{The Riemann problem at the junction: the subsonic case}\label{sec:RSsubsonic}

Now, we shall consider a simple junction connecting two arcs as sketched in Figure \ref{fig:network_2arcs}. We use the same notation as in Section \ref{2-arcs_case} and in particular, we shall use the subscript $\ell$ for the left 
 arc and $r$ for the right 
 one. 
 
 \begin{notations} From now on, we shall use the following notations~:
\begin{itemize}
\item $U^-_{\ell}=(\rho^-_{\ell},q^-_{\ell})$ and $U^+_{r}=(\rho^+_{r},q^+_{r})$ denote the constant states on the left arc and on the right arc;
\item $U^*_{\ell}=(\rho^*_{\ell},q^*_{\ell})$ and $U^*_{r}=(\rho^*_{r},q^*_{r})$ represent the unknown values of the left and right states at the junction.
\end{itemize}
\end{notations}

Following what has been done previously, we define the two sonic curves that determine the boundary between subsonic  and supersonic regions, see the black curves on Figure \ref{fig:junction_Rpb_curves},  thanks to the following functions
\begin{equation}
\left\{\begin{array}{c}
q_s^+(\rho) =  \rho\sqrt{p^\prime(\rho)},
\smallskip\\
q_s^-(\rho) = - \rho\sqrt{p^\prime(\rho)}.
\end{array}\right.
\label{subsonic_curves}
\end{equation}
We define the constant data $U^-_{\ell}=(\rho^-_{\ell},q^-_{\ell})$ and $U^+_{r}=(\rho^+_{r},q^+_{r})$ given on the left and right
arc respectively.
We will assume that we are in the subsonic case at the junction, that is to say that $q_s^-(\rho^-_{\ell}) < q^-_{\ell}< q_s^+(\rho^-_{\ell})$ and  $q_s^-(\rho^+_{r}) < q^+_{r}<q_s^+(\rho^+_{r})$.

Now, in order to solve the Riemann problem, the unknown states $U^*_\ell$ and $U^*_r$ at the junction have to lie on the Riemann invariant curves defined from $U^-_{\ell}$ and  $U^+_{r}$.
Considering the jump condition (JTC) in \eqref{jc}, and assuming that the states are all subsonic, we then get the following system of equations
\begin{equation}\label{eq:1in1_Rpb}
\left\{\begin{array}{c}
z_1(\rho^*_\ell,q^*_\ell) = z_1(\rho^-_{\ell},q^-_{\ell}),
\smallskip\\
z_2(\rho^*_r,q^*_r) = z_2(\rho^+_{r},q^+_{r}),
\smallskip\\
q^*_\ell = q^*_r = \kappa(\rho^*_\ell -\rho^*_r),
\end{array}\right.
\end{equation}
subject to the constraints 
\begin{equation}\label{eq:subsonic_lim}
\left\{
\begin{array}{c}
q_s^-(\rho^*_{\ell}) < q^*_{\ell}< q_s^+(\rho^*_{\ell}),
\medskip\\
 q_s^-(\rho^*_{r}) < q^*_{r}<q_s^+(\rho^*_{r}).
\end{array}\right.
\end{equation}

\begin{figure}[htbp!]
\begin{center}
\begin{tikzpicture}
\draw[-{Latex[length=3mm]}]  (0,-3.5) -- (0,3.5) ;
\draw[-{Latex[length=3mm]}]  (-0.5,0) -- (2.5,0) ;
\draw [thick,domain=0:2] plot (\x,{2*(0.5*\x*sqrt(\x))});
\draw [thick,domain=0:2] plot (\x,{2*(-0.5*\x*sqrt(\x))});
\draw [blue,thick,domain=0:2] plot (\x,{2*(\x-\x*sqrt(\x))});
\draw [red,thick,domain=0:2] plot (\x,{2*(-1.5*\x+\x*sqrt(\x))});
\draw (-.2,2.5) node[above]{$q$};
\draw (2.7,-0.2) node[above]{$\rho$};
\draw (1.6,-.8) node {$\bullet$} ;
\draw (2.5,-1) node[above]{$(\rho_{I}, q_{I})$};
\draw [blue] (.5,.6) node[above]{$q_{\ell}^*$};
\draw [red](.5,-1.6) node[above]{$q_{r}^*$};
\draw (2.5,2.5) node[above]{$q_{s}^+$};
\draw (2.5,-3) node[above]{$q_{s}^-$};
\draw [dashed]   (0,0.3) -- (2.5,0.3) ;
\draw (-0.5,0.2) node[above]{$q_{max}^*$};
\draw [dashed]   (0,-1.02) -- (2.5,-1.02) ;
\draw (-0.5,-1.2) node[above]{$q_{min}^*$};
\end{tikzpicture}
\end{center}
\caption{Riemann problem in the subsonic case with $p(\rho)=p_0\rho^2$ (i.e. $\gamma=2$): sonic curves $q_{s}^-$ and $q_{s}^+$ defined at Eq.\eqref{subsonic_curves} delimiting the subsonic region and Riemann invariant curves  $q_{\ell}^*$ and $q_{r}^*$ defined at Eq.\eqref{eq:curve_cond_Rpb}  }
\label{fig:junction_Rpb_curves}
\end{figure}
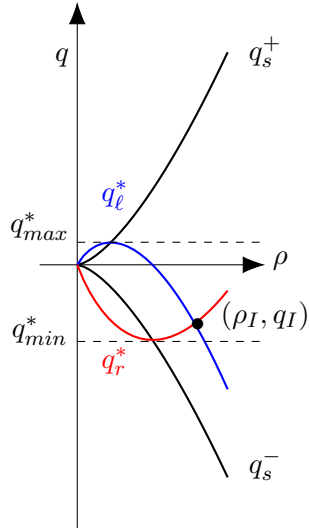

 Following Eq.  \eqref{eq:solRinv}, we define the Riemann invariant curves
\begin{equation}\label{eq:curve_cond_Rpb}
\left\{\begin{array}{c}
q_\ell^*(\rho) = z_1(\rho^-_{\ell},q^-_{\ell})\rho  - \alpha_\gamma \rho\sqrt{p^\prime(\rho)},
\smallskip\\
q_r^*(\rho) = z_2(\rho^+_{r},q^+_{r})\rho  + \alpha_\gamma \rho\sqrt{p^\prime(\rho)}.
\end{array}\right.
\end{equation}

We notice that in the case where $1 <\gamma \leq 3$ and in the subsonic case, then $\mu_1(\rho^-_{\ell},q^-_{\ell}) < 0 <\mu_2(\rho^+_{r},q^+_{r})$, implies $z_1(\rho^-_{\ell},q^-_{\ell}) > 0 >z_2(\rho^+_{r},q^+_{r})$. Moreover, $q_\ell^*$ is a strictly concave function of $\rho$ and $q_r^*$ is strictly convex. 
 Thus condition \eqref{eq:condRP} holds and there exists a unique intersection point, apart from $0$,  for the two curves of functions $q_\ell^*$ and $q_r^*$.
From now on, we shall call this point $U_I=(\rho_I,q_I)$ and it satisfies $q_{I}=q_\ell^*(\rho_I)=q_r^*(\rho_I)$. 

Now, in the case of  the conditions \eqref{eq:1in1_Rpb} we consider here, we may allow a discontinuity on densities, i.e. $\rho_\ell^*\ne\rho_r^*$.
In the next lemma, we shall show that system \eqref{eq:1in1_Rpb}, subject to \eqref{eq:subsonic_lim}, 
admits a unique solution,  which lies therefore  in the \textit{subsonic} region.
\begin{lemma} We assume $p$ of the form \eqref{eq:pfunct} with $1 <\gamma \leq 3$. Let be given two points $U^-_{\ell}=(\rho^-_{\ell},q^-_{\ell})$ and $U^+_{r}=(\rho^+_{r},q^+_{r})$ such that $q_s^-(\rho^-_{\ell}) < q^-_{\ell}< q_s^+(\rho^-_{\ell})$ and  $q_s^-(\rho^+_{r}) < q^+_{r}<q_s^+(\rho^+_{r})$, then there exists a unique couple of points $U_\ell^*$, $U_r^*$ verifying system \eqref{eq:1in1_Rpb} subject to  constraints \eqref{eq:subsonic_lim}.

\end{lemma}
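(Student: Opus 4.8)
The plan is to eliminate two of the three unknowns in \eqref{eq:1in1_Rpb} and reduce the system to a single scalar equation for the common flux $q^*:=q^*_\ell=q^*_r$, whose root I then locate by a monotonicity-and-intermediate-value argument; the strict monotonicity will also give uniqueness for free.

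First I would read the first two equations of \eqref{eq:1in1_Rpb} as the statements that $(\rho^*_\ell,q^*_\ell)$ lies on the curve $q=q^*_\ell(\rho)$ and $(\rho^*_r,q^*_r)$ on $q=q^*_r(\rho)$ from \eqref{eq:curve_cond_Rpb}. Differentiating these curves and comparing with the eigenvalues, one checks the identities $(q^*_\ell)'(\rho)=\mu_1(\rho,q^*_\ell(\rho))$ and $(q^*_r)'(\rho)=\mu_2(\rho,q^*_r(\rho))$. Hence on the subsonic strips delimited by \eqref{eq:subsonic_lim}, where $\mu_1<0<\mu_2$, the function $q^*_\ell$ is strictly decreasing and $q^*_r$ strictly increasing. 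Both are therefore continuous bijections onto flux-intervals, and I may invert them as $\rho^*_\ell=R_\ell(q^*)$ with $R_\ell$ strictly decreasing, and $\rho^*_r=R_r(q^*)$ with $R_r$ strictly increasing, for $q^*$ in the common subsonic flux-interval $(q^*_{\min},q^*_{\max})$, where $q^*_{\min}<0<q^*_{\max}$. Inserting these into the third (JTC) equation of \eqref{eq:1in1_Rpb} reduces the whole system to finding a zero of
\[
F(q^*):=\kappa\bigl(R_\ell(q^*)-R_r(q^*)\bigr)-q^*, \qquad q^*\in(q^*_{\min},q^*_{\max}).
\]
Since $\kappa>0$, $R_\ell$ is strictly decreasing and $-R_r$ is strictly decreasing, $F$ is continuous and strictly decreasing, so it has at most one zero and the associated state is subsonic by construction; this settles uniqueness.

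For existence I would exhibit a sign change of $F$ between the two explicit anchors $q^*=0$ and $q^*=q_I$, the flux of the intersection point $U_I=(\rho_I,q_I)$ of the two Riemann-invariant curves. At $q^*=q_I$ both inverses return $\rho_I$, so $R_\ell(q_I)=R_r(q_I)=\rho_I$ and thus $F(q_I)=-q_I$. At $q^*=0$, solving $q^*_\ell(\rho)=0$ and $q^*_r(\rho)=0$ gives $\sqrt{p'(R_\ell(0))}=z_1(\rho^-_{\ell},q^-_{\ell})/\alpha_\gamma$ and $\sqrt{p'(R_r(0))}=-z_2(\rho^+_{r},q^+_{r})/\alpha_\gamma$; since $\rho\mapsto\sqrt{p'(\rho)}$ is increasing, the sign of $F(0)=\kappa(R_\ell(0)-R_r(0))$ equals the sign of $z_1+z_2$, hence of $q_I$ (indeed a direct computation in the $(\rho,u)$-plane yields $q_I=\rho_I\,(z_1+z_2)/2$). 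Therefore $F(0)$ and $F(q_I)=-q_I$ have opposite signs whenever $q_I\neq0$, and the intermediate value theorem produces a zero of $F$ strictly between $0$ and $q_I$; the degenerate case $q_I=0$ gives the root $q^*=0$ directly.

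The one delicate point — the main obstacle — is to guarantee that the anchor $q_I$ actually lies in the admissible interval $(q^*_{\min},q^*_{\max})$, i.e. that $U_I$ is subsonic, so that $F$ is defined there and the produced root is subsonic. This is exactly where the hypotheses enter. In the $(\rho,u)$-plane the intersection satisfies $u_I=(z_1+z_2)/2$ and $\sqrt{p'(\rho_I)}=(z_1-z_2)/(2\alpha_\gamma)$, so $U_I$ is subsonic precisely when $|u_I|<\sqrt{p'(\rho_I)}$, that is $|z_1+z_2|<z_1-z_2$, which is equivalent to $z_1>0>z_2$ — exactly the sign condition on the data recalled just before \eqref{eq:curve_cond_Rpb}. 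Since $0$ is trivially interior to $(q^*_{\min},q^*_{\max})$ and $q_I$ is now also seen to be interior, the whole segment between them is subsonic, and the unique zero of $F$ produced above furnishes the desired unique solution of \eqref{eq:1in1_Rpb} subject to \eqref{eq:subsonic_lim}.
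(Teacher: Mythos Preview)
Your strategy mirrors the paper's almost exactly: both proofs invert the two Riemann-invariant curves on their monotone branches to obtain decreasing/increasing functions of $q$, then reduce system \eqref{eq:1in1_Rpb} to a single scalar equation in the common flux $q^*$. Your function $F(q)=\kappa(R_\ell(q)-R_r(q))-q$ has a zero precisely where the paper's function $K(q)=q/(\phi_\ell(q)-\phi_r(q))$ takes the value $\kappa$, so the two reductions are equivalent; your identity $(q^*_\ell)'(\rho)=\mu_1(\rho,q^*_\ell(\rho))$ is a clean way to see why the subsonic branch is the decreasing one, playing the same role as the paper's computation that the maximum of $q^*_\ell$ lies on the sonic curve $q_s^+$.

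There is, however, a computational slip in the step you correctly flag as ``the main obstacle''. From $u_I=(z_1+z_2)/2$ and $\sqrt{p'(\rho_I)}=(z_1-z_2)/(2\alpha_\gamma)$, the subsonic condition $|u_I|<\sqrt{p'(\rho_I)}$ reads
\[
\alpha_\gamma\,|z_1+z_2|<z_1-z_2,
\]
not $|z_1+z_2|<z_1-z_2$ as you wrote. For $1<\gamma<3$ one has $\alpha_\gamma>1$, so your stated condition is strictly weaker than the correct one, and the claimed equivalence with $z_1>0>z_2$ fails. Indeed one can choose subsonic data $U^-_\ell,U^+_r$ (hence $z_1>0>z_2$ by the discussion preceding \eqref{eq:curve_cond_Rpb}) for which $U_I$ is \emph{super}sonic: take for instance $|z_2|$ very small compared with $z_1$ and $\gamma$ close to $1$. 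In that situation $\rho_I$ lies on the increasing (supersonic) branch of $q^*_\ell$, so $R_\ell(q_I)\neq\rho_I=R_r(q_I)$, hence $F(q_I)\neq -q_I$, and your sign-change argument between $0$ and $q_I$ no longer goes through. The paper's own proof is terse on exactly this point --- it asserts that $K:[0,q_I]\to[0,+\infty)$ is onto without checking that $\phi_\ell(q_I)=\phi_r(q_I)$ --- so you have identified the genuine crux, but your justification of it does not hold as written.
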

\begin{proof} First of all,   note that  the solution $q^*=q_\ell^*(\rho_\ell^*) = q_r^*(\rho_r^*)$  to system \eqref{eq:1in1_Rpb} should be sought, see Figure\ref{fig:junction_Rpb_curves},  in the region defined by 
\begin{equation*}
\{ q :  q^*_{min}=\min_{\rho>0}q_r^*(\rho) \leq q \leq \max_{\rho>0}q_\ell^*(\rho)=q^*_{max} \}.
\end{equation*}
Note that  the intersection point $(\rho_{I}, q_{I})$ satisfies $q^*_{min}<  q_{I} < q^*_{max}$.

Then, it is easy to check that the intersection point, apart from $0$, between the sonic curve defined by $q_s^+$ and the curve defined by $q_\ell^*$ 
coincides with the maximum point for $q_\ell^*$. Indeed,
\begin{equation*}
q_s^+(\rho) = q_\ell^*(\rho) \text{ and }  \rho>0 \quad\iff\quad \sqrt{p^\prime(\rho)} = \frac{\gamma-1}{\gamma+1}\,z_1(\rho^-_{\ell},q^-_{\ell})
\end{equation*}
and 
\begin{equation*}
(q_\ell^*)^\prime(\rho)=0 \quad\iff\quad \sqrt{p^\prime(\rho)} = \frac{\gamma-1}{\gamma+1}\,z_1(\rho^-_{\ell},q^-_{\ell}).
\end{equation*}
Analogously, the minimum point for $q_r^*$ coincides with the intersection point, apart from $0$, between the sonic curve defined by $q_s^-$ and the curve defined by $q_r^*$.

Therefore, in the subsonic region, $q_\ell^*$ (resp. $q_r^*$)   is a continuous and strictly decreasing (resp. strictly increasing) function of $\rho$ and we can therefore define  its inverse function $\phi_\ell$ (resp. $\phi_r$), which is also a continuous and strictly decreasing (resp. strictly increasing) function of $q$.

We can now prove that for all $\kappa\in[0,+\infty)$ there exists a unique solution to system \eqref{eq:1in1_Rpb}   subject to subsonic conditions  \eqref{eq:subsonic_lim}. 

Such a solution satisfies 
 $\rho^*_\ell=\phi_\ell(q^*)$, $\rho^*_r=\phi_r(q^*)$ and
$
q^* =\kappa(\rho^*_\ell-\rho^*_r)= \kappa\big(\phi_\ell(q^*)-\phi_r(q^*)\big)
$
and the existence and  unicity of the solution follows from the fact that the function $K$ defined by 
$$
K(q) = \frac{q}{\phi_\ell(q)-\phi_r(q)}
$$
is continuous and strictly monotone, i.e. we have that 
\begin{itemize}
\item[(i)] if $q_I> 0$ then $K:[0,q_I]\rightarrow[0,+\infty)$ 
 is continuous and strictly increasing;
\item[(ii)] if $q_I <0$ then $K:[q_I,0]\rightarrow[0,+\infty)$ 
 is continuous and strictly decreasing.
\end{itemize}
\end{proof}

Staying in the subsonic region is equivalent to limiting the amplitude of the jump, specifically for $\kappa \rightarrow 0$ then $q^*\rightarrow 0$ which belongs to the subsonic region and for $\kappa\rightarrow+\infty$ then $q^*\rightarrow q_I$ and the jump amplitude goes to zero, i.e. $\rho^*_\ell - \rho^*_r \rightarrow 0$.

\begin{remark}
When $\gamma > 3$,  if we impose  condition \eqref{eq:condRP} for the left and right states,  the intersection point $q_I$ exists and system \eqref{eq:1in1_Rpb}  admits a solution, but we cannot guarantee that this solution is subsonic.
\end{remark}

\subsection{Riemann invariants and the junction - the supersonic case}\label{sec:RSsupersonic}
In the supersonic case, the Riemann solver leading to  system \eqref{eq:1in1_Rpb} does not apply. A different formulation is needed, which in general is still an open problem. A number of possible approaches have been proposed in the literature, see for instance \cite{HHW20}, but here we follow the approach proposed in \cite{BP18}. The idea stated in this paper can be resumed as follows~:
\begin{itemize}
\item Assume to be on a left 
arc and to have to the left of the junction a supersonic state $U^-_\ell$ with $\mu_1(\rho^-_\ell,q^-_\ell) > 0$,  namely $q^-_\ell>q_s^+(\rho^-_\ell)$. Introducing the point $\widetilde U^- _\ell= (\widetilde\rho^-_\ell,\widetilde q^-_\ell)$ such that $\widetilde q^-_\ell = q^-_\ell$, $z_1(\widetilde\rho^-_\ell,q^-_\ell)=z_1(\rho^-_\ell,q^-_\ell)$ and $\mu_1(\widetilde U^-_\ell)<0$,
the region of possible junction values $U^*_\ell$ is defined as the set of points verifying
\begin{equation*}
z_1(\rho_\ell^*,q_\ell^*) =  z_1(\widetilde\rho^-_\ell,q^-_\ell) \quad\mbox{with}\quad \rho_\ell^* > \widetilde\rho^-_\ell.
\end{equation*}
\item Assume to be on a right 
arc and to have to the right of the junction a supersonic state $U^+_r$ with $\mu_2(\rho^+_r,q^+_r) < 0$,  namely $q^+_r<q_s^-(\rho^+_r)$. Introducing the point $\widetilde U^+_r = (\widetilde\rho^+_r,\widetilde q^+_r)$ such that $\widetilde q^+_r = q^+_r$, $z_2(\widetilde\rho^+_r,q^+_r)=z_2(\rho^+_r,q^+_r)$ and $\mu_2(\widetilde U^+_r)>0$,
the region of possible junction values $U^*_r$ is defined as the set of points verifying
\begin{equation*}
z_2(\rho_r^*,q_r^*) =  z_2(\widetilde\rho^+_r,q^+_r) \quad\mbox{with}\quad \rho_r^* < \widetilde\rho^+_r.
\end{equation*}
\end{itemize}
To summarize, on the one hand, if we are in the supersonic region with $\mu_1(\rho^-_\ell,q^-_\ell) > 0$ (resp. $\mu_2(\rho^+_r,q^+_r) < 0$) the idea is to connect first the left (resp. right) state $U^-_\ell$ (resp. $U^+_r$) with a shock with zero speed
to the subsonic state $\widetilde U^-_\ell$ (resp.  $\widetilde U^+_r$), i.e. 
$\displaystyle  \frac{q^-_\ell -\widetilde{q}^-_\ell}{\rho^-_\ell -\widetilde{\rho}^-_\ell}=0$ (resp. $\displaystyle \frac{q^+_r-\widetilde q^+_r}{\rho^+_r -\widetilde \rho^+_r}=0$), and then the latter to $U_\ell^*$ (resp. $U_r^*$) with a shock with negative (resp. positive) speed. 
\\

On the other hand, if we are in the supersonic region with $\mu_1(\rho^-_\ell,q^-_\ell) < 0$, that is $q^-_\ell$ is negative and $q^-_\ell < q^-_s(\rho^-_\ell)$ (resp.  $\mu_2(\rho^+_r,q^+_r) > 0$, namely $q^+_r$ is positive and $q^+_r>q^+_s(\rho^+_r)$), see Figure \ref{fig:junction_Rpb_curves_super}, an admissible solution in the sense of the definition of Riemann solver given in \cite{BP18} is to stay on the invariant curve $z_1(\rho_\ell^*,q_\ell^*) =  z_1(\rho^-_\ell,q^-_\ell)$ (resp. $z_2(\rho_r^*,q_r^*) =  z_2(\rho^+_r,q^+_r)$) with the constraint $\mu_2(\rho_\ell^*,q_\ell^*)<0$ (resp.  $\mu_1(\rho_r^*,q_r^*)>0$). 

Moreover, in both cases, the values $U_\ell^*$ and $U_r^*$ have  to verify the 
jump condition \eqref{jc}. 
It is clear that this procedure does not always lead to a solution. However, as stated in \cite{BP18}, the region of admissible junction solutions may be enlarged but losing the uniqueness, and additional conditions have to be added to define the solution at a junction. This discussion is beyond the scope of our work, and we will simply compare simulations in cases where the above procedure is well defined.

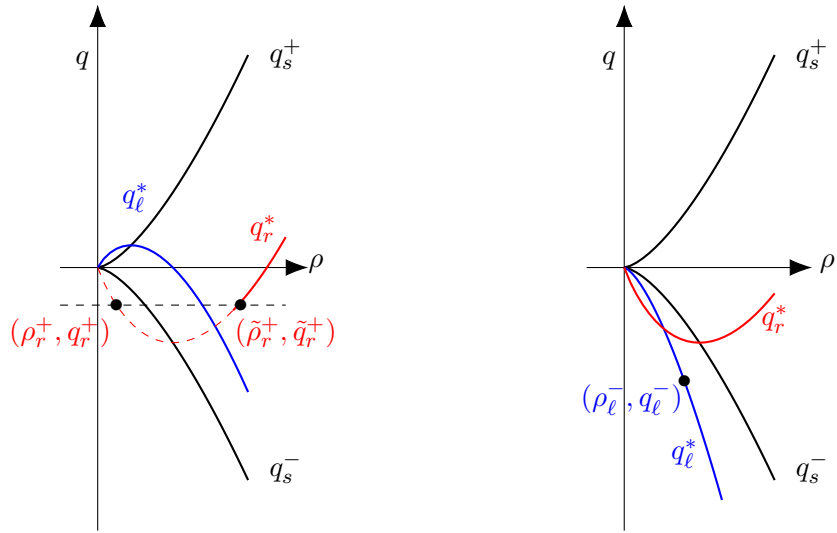
\begin{figure}[htbp!]
\begin{center}
\begin{tikzpicture}
\begin{scope}
\draw[-{Latex[length=3mm]}]  (0,-3.5) -- (0,3.5) ;
\draw[-{Latex[length=3mm]}]  (-0.5,0) -- (2.8,0) ;
\draw [thick,domain=0:2] plot (\x,{2*(0.5*\x*sqrt(\x))});
\draw [thick,domain=0:2] plot (\x,{2*(-0.5*\x*sqrt(\x))});
\draw [blue,thick,domain=0:2] plot (\x,{2*(\x-\x*sqrt(\x))});
\draw [red,dashed,domain=0:2.3] plot (\x,{2*(-1.5*\x+\x*sqrt(\x))});
\draw [red,thick,domain=1.8:2.5] plot (\x,{2*(-1.5*\x+\x*sqrt(\x))});
\draw (-.2,2.5) node[above]{$q$};
\draw (2.9,-0.2) node[above]{$\rho$};
\draw (0.25,-0.5) node {$\bullet$} ;
\draw [red](-0.5,-1.2) node[above]{$(\rho^+_{r}, q^+_{r})$};
\draw (1.9,-0.5) node {$\bullet$} ;
\draw [red](2.5,-1.2) node[above]{$(\tilde\rho^+_{r}, \tilde q^+_{r})$};
\draw [dashed]  (-0.5,-0.5) -- (2.5,-0.5) ;
\draw [blue] (.5,.6) node[above]{$q_{\ell}^*$};
\draw [red](2.2,.2) node[above]{$q_{r}^*$};
\draw (2.5,2.5) node[above]{$q_{s}^+$};
\draw (2.5,-3) node[above]{$q_{s}^-$};
\end{scope}
\begin{scope}[xshift=7cm]
\draw[-{Latex[length=3mm]}]  (0,-3.5) -- (0,3.5) ;
\draw[-{Latex[length=3mm]}]  (-0.5,0) -- (2.5,0) ;
\draw [thick,domain=0:2] plot (\x,{2*(0.5*\x*sqrt(\x))});
\draw [thick,domain=0:2] plot (\x,{2*(-0.5*\x*sqrt(\x))});
\draw [blue,thick,domain=0:1.3] plot (\x,{(-0.1*\x-2*\x*sqrt(\x))});
\draw [red,thick,domain=0:2] plot (\x,{2*(-1.5*\x+\x*sqrt(\x))});
\draw (-.2,2.5) node[above]{$q$};
\draw (2.7,-0.2) node[above]{$\rho$};
\draw (0.8,-1.511) node {$\bullet$} ;
\draw [blue](.1,-2.1) node[above]{$(\rho^-_{\ell}, q^-_{\ell})$};
\draw [blue] (.8,-2.8) node[above]{$q_{\ell}^*$};
\draw [red](2,-1) node[above]{$q_{r}^*$};
\draw (2.5,2.5) node[above]{$q_{s}^+$};
\draw (2.5,-3) node[above]{$q_{s}^-$};
\end{scope}
\end{tikzpicture}
\end{center}
\caption{Riemann problem in the supersonic case with $p(\rho)=p_0\rho^2$ :  on the left,    right  supersonic case with $\mu_2(\rho^+_{r},q^+_{r}) < 0$, i.e. $q^+_{r}< q^-_s(\rho^+_{r})$, and  construction of the point  $\widetilde U^+_r = (\widetilde\rho^+_r,\widetilde q^+_r)$. On the right, left supersonic case with $\mu_1(\rho^-_{\ell},q^-_{\ell}) < 0$, i.e. $q^- _{\ell}< q^-_s(\rho^-_{\ell})$; here, we stay on the invariant curve $q_\ell^*$.}
\label{fig:junction_Rpb_curves_super}
\end{figure}

\section{A relaxation scheme for the jump junction condition - case of a 2-arc network}\label{DEJunctionCondition}

For the sake of simplicity, we  explain in this section  the numerical scheme and the numerical jump junction condition we develop in the case of a simple network, composed of two arcs. An extension to more general networks will be detailed in Section~\ref{Extensions}.

\subsection{A relaxation scheme on a single interval}

Now, let us describe the numerical scheme we use to approximate  \eqref{GasDynamicLeft}--\eqref{jc}. To begin with, we use a relaxation scheme, first introduced in \cite{JX}, see also \cite{AN96, NA98, AN2000, Bou04},   which we will see, away from the junction, is just another interpretation of the  HLL scheme introduced in \cite{HLL}.

We rewrite our equations in the vector form already presented in Section \ref{RIJunctionCondition}, the same as equation \eqref{eq:GenSys}:

 \begin{equation}\label{eq:vector}
\partial_{t} U+\partial_{x}F(U)=0 , 
\end{equation}
where $U \in \R^2$ is the vector of unknowns,
and the function $F : \R^2 \to \R^2$ is the flux of the system. 

Now, we introduce its relaxation approximation, namely a  semilinear hyperbolic system with a singular perturbation parameter $\epsilon$:
\begin{equation}\label{relaxmodel}
\left\{\begin{array}{l}
\ds{\partial_{t}f_1^\epsilon+ c_1\partial_{x} f_1^\epsilon =\frac{1}{\epsilon}\left(M_1(U^\epsilon)-f_1^\epsilon\right),}\\ \\
\ds{\partial_{t}f_2^\epsilon+ c_2\partial_{x} f_2^\epsilon =\frac{1}{\epsilon}\left(M_2(U^\epsilon)-f_2^\epsilon\right).}
\end{array}\right.
\end{equation}
Here $c_1,\ c_2$ are real constants such that $c_1<c_2$,  $f_i^\epsilon\in \R^2$, for $i=1,2$, and $U^\epsilon:= f_1^\epsilon+f_2^\epsilon$. The functions $M_i(U): \R^2 \to \R^2$, for $i=1,2$, are called Maxwellians for the system and are vector functions 
  satisfying the two following relations~:
\begin{equation}\label{maxw}
\left\{
\begin{array}{ll}
M_{1}(U)+M_{2}(U)&=U,
\smallskip\\ \\
c_{1}M_{1}(U)+ c_{2}M_{2}(U)&=F(U).
\end{array}
\right.
\end{equation}                                      
From \eqref{maxw}, we can obtain explicit expressions for $M_{1}(U)$ and $M_{2}(U)$ as
\begin{equation}\label{eq:maxwellian}
M_{1}(U)=\frac{c_{2}U-F(U)}{c_{2}-c_{1}} \quad\text{ and }\quad M_{2}(U)=\frac{c_{1}U-F(U)}{c_{1}-c_{2}}.
\end{equation}
Notice that, setting $V^\epsilon=c_1 f_1^\epsilon +c_2 f_2^\epsilon$, from system \eqref{relaxmodel} we easily obtain
\begin{equation*}
\left\{\begin{array}{l}
\ds{\partial_{t}U^\epsilon+ \partial_{x} V^\epsilon =0,}\\ \\
\ds{\partial_{t}V^\epsilon+\partial_{x}\left( (c_1+c_2)V^\epsilon -c_1c_2 U^\epsilon\right)=\frac{1}{\epsilon}\left(F(U^\epsilon)-V^\epsilon\right).}
\end{array}\right.
\end{equation*}
So, as $\epsilon$ goes to zero, we expect $U^\epsilon$ converges to the solution to \eqref{eq:vector}, as shown for the scalar case in \cite{NA98}, see also \cite{serre00, Bou99, Bianchini06} for the system case in one space dimension. Clearly, when $c_1=-\lambda<0<c_2=\lambda$, for some real constant $\lambda>0$, we recover the classical Jin-Xin model of relaxation \cite{JX}.  

Notice that, as shown in \cite{Bou99} (see also \cite{AN2000}), a necessary condition to have the stability for this approximation is that the Jacobians of the Maxwellian functions $M_i(U)$, are strictly positive matrices for $i=1,2$ and all values of $U$ under consideration. Using the explicit expression  \eqref{eq:maxwellian}, it is easy to see that the 
so-called subcharacteristic condition has to be verified. 
Namely, if $\mu(U)$ is a real eigenvalue of the Jacobian matrix $F'(U)$, we need to have that 
\begin{equation}	\label{subchar}
c_1\leq \mu(U) \leq c_2 \text{ for all } U \text{ under consideration}.
\end{equation}
This condition will be required not only to assess the consistency of the correspondent scheme, but also its stability. 

The strategy to use this relaxation approach to create a numerical scheme for equation \eqref{eq:vector}, is in general to solve numerically at each time step the homogeneous part of system \eqref{relaxmodel}
\begin{equation*}
\left\{\begin{array}{l}
\ds{\partial_{t}f_{1}+c_{1} \partial_{x} f_{1}=0,}\\ \\
\ds{\partial_{t}f_{2}+c_{2}\partial_{x} f_{2}=0},
\end{array}\right.
\end{equation*}
and then to reconstruct the solution to \eqref{eq:vector} from a suitable projection of the functions $(f_1,f_2)$ on the Maxwellian functions $M_1(U), M_2(U)$. This strategy has been fully implemented in \cite{AN2000} in the framework of Finite Difference schemes, see also the recent developments in the framework of Lattice Boltzmann schemes in \cite{Aregba24}. In the following, as already done in \cite{NRT}, we prefer to follow the Finite Volume approach of \cite{Bou04}. So, let us introduce some notations to present our scheme. 

We begin by describing the discretization process on a single interval, which will be  the numerical strategy that we will use both on arc $\ell$ and on arc $r$. 

Let us denote by $\Dx$ the  spatial discretization step.
We discretize the interval  using $J$  cells  
$$
\mathcal C_{j}=[x_{j-1/2},x_{j+1/2}], \, 1\leq j\leq J 
$$
centered at the nodes $x_{j}=(j-\frac 1 2)\Dx, \, 1\leq j\leq J$.

We also denote by $\Dt$ the time step. Discrete times are denoted by $t_{n}=n \Dt, n \in \mathbb{N}$. The time index will be omitted when possible for  the sake of simplicity.

Consider system \eqref{GasDynamic}  under the form \eqref{eq:vector}.
We define $U^n_{j}$ to be the approximation of the mean value of vector $U$ on cell $\mathcal C_{j}$ at time $t_{n}$, that is to say 
$\ds U^n_{j} \sim \frac{1}{\Dx}\int_{\mathcal C_{j}} U(t^n, x) dx. $
According to the finite volume framework, an explicit  conservative finite volume scheme for discretizing equation  \eqref{eq:vector} can be expressed as follows
\begin{equation}\label{schemenum}
U^{n+1}_j = U^{n}_j - \frac{\Dt}{\Dx}\left(\mathcal{F}^n_{j+1/2} - \mathcal{F}^n_{j-1/2}\right),
\end{equation}
with $\mathcal{F}^n_{j+1/2}=\mathcal{F}(U^n_j,U^n_{j+1})$ for an explicit three point scheme. The scheme is therefore defined by the function $\ds \mathcal{F}$ such that $\ds \mathcal{F} (U^{-},U^{+})=\left(
\begin{array}{l}
\mathcal{F}^{\rho}(U^{-},U^{+})\\
\mathcal{F}^{q}(U^{-},U^{+})
\end{array}
\right) \in \mathbb{R}^2$, which is called  the \textit{numerical flux}.

We now describe the  scheme we will use on each arc, which  will be equivalent to the HLL scheme. Here we follow the presentation given in \cite{Bou04}.

Consider again the homogeneous counterpart to the  \textit{relaxation system} \eqref{relaxmodel},  with $f_{i}=f_{i}(x,t)\in \R^2$ for $i=1,2$,
\begin{equation}\label{eq:fsyst}
\left\{\begin{array}{l}
\ds{\partial_{t}f_{1}+c_{1} \partial_{x} f_{1}=0,}\\ \\
\ds{\partial_{t}f_{2}+c_{2}\partial_{x} f_{2}=0},
\end{array}\right.
\end{equation}
where $c_1<c_2$ are two speeds to be chosen later on.

Noticing that system \eqref{eq:fsyst} can be solved explicitely, we obtain an approximate solution to system  \eqref{eq:vector} as a function of the solution to system  \eqref{eq:fsyst}.
Indeed, an exact Riemann solver for \eqref{eq:fsyst} gives
\begin{equation}\label{eq:fRsolver}
\mathcal{R}(x/t,f^-,f^+) = \left\{
\begin{array}{lcr}
(f^-_1,f^-_2) & \mbox{ if } & x/t < c_1,
\\
(f^+_1,f^-_2) & \mbox{ if } & c_1 < x/t < c_2,
\\
(f^+_1,f^+_2) & \mbox{ if } & c_2 < x/t,
\end{array}
\right.
\end{equation}
with $\ds f^{\pm}_i\sim M_i(U^{\pm}), \, i=1,2$.
Thus 
we get the following approximate 
HLL numerical flux for the scheme in conservative form \eqref{schemenum}
\begin{equation}\label{flux}
\mathcal{F}(U^-, U^+)=\\
\left\{\begin{array}{ll}
c_{1}M_{1}(U^-)+ c_{2}M_{2}(U^-)=\ds{F(U^-),} & \text{ if } 0<c_{1}, \\
\ds{c_{1}M_{1}(U^+)+ c_{2}M_{2}(U^-)=}
\ds{\frac{c_{2} F(U^-)-c_{1} F(U^+)}{c_{2}-c_{1}}}&+\ds{\frac{c_{2}c_{1}}{c_{2}-c_{1}}(U^+-U^-)}\\& \qquad \qquad \text{ if }  c_{1}<0<c_{2},\\
c_{1}M_{1}(U^+)+ c_{2}M_{2}(U^+)=\ds{F(U^+),} & \text{ if }  c_{2}<0.
\end{array}\right.
\end{equation}
Note that for stability reasons and to have some entropy dissipation, $c_{1}$ and $c_{2}$ have to be chosen accordingly to the inequality \eqref{subchar}.

From now on, to deal with the transmission condition (JTC), we  always stay in the case $c_1<0<c_2$. For simplicity reasons,  we set $c_1=-c_2=\lambda$ with $\lambda> max_U |\mu(U)|>0$, where $\mu(U)$ is the maximal eigenvalue of $F'(U)$, even if the same considerations could be made in the general case. The numerical 
flux \eqref{flux} therefore reduces to 
\begin{equation}\label{flux2}
\mathcal{F}(U^-, U^+)=
\ds{\frac{ F(U^-)+F(U^+)}{2}-\frac{\lambda}{2}(U^+-U^-)}
\end{equation}
and the Maxwellian functions \eqref{eq:maxwellian} to 
\begin{equation}\label{eq:maxwellian_simpl}
M_{1}(U)=\frac 1 2 \left( U -\frac{F(U)}{\lambda} \right) \quad\text{ and }\quad M_{2}(U)=\frac 1 2 \left( U+\frac{F(U)}{ \lambda}\right).
\end{equation}

Finally, let us describe the discrete numerical boundary conditions in the case of a single interval. We assume that system  \eqref{eq:vector} is complemented with Neumann boundary conditions on both extremities, which guarantee the mass conservation,  that is to say  $\ds \partial_{x} \rho (t, 0)= q (t, 0)=0 $ and $\ds \partial_{x} \rho (t, L)= q (t, L)=0 $  for all $t\geq 0$.

In the discrete setting, we have to define the quantities in the external ghost cells of the network,  
$\mathcal  C_{0}$ and $\mathcal C_{J+1}$.
To ensure the mass conservation also at the discrete level, we impose on both boundaries    
\begin{equation*}
\left\{
\begin{aligned}
\mathcal{F}^{\rho}(U_{0},U_{1})&=\frac 1 2 (q_{0}+q_{1})-\frac \lambda 2 (\rho_{1}-\rho_{0})=0,\\
\mathcal{F}^{\rho}(U_{J},U_{J+1})&=\frac 1 2 (q_{J}+q_{J+1})-\frac \lambda 2 (\rho_{J+1}-\rho_{J})=0,
\end{aligned}
\right.
\end{equation*}
  see Eq. \eqref{flux2}, which is satisfied if 
\begin{equation}\label{bord-gauche}
\rho_{0}=\rho_{1} \text{ and } q_{0}=-q_{1}
\end{equation}
and
\begin{equation}\label{bord-droit}
\rho_{J}=\rho_{J+1} \text{ and } q_{J}=-q_{J+1}.
\end{equation}

\subsection{The relaxation  scheme on a 2-arc network }\label{HLL2arcs}

Let us now go back to the case of a simple network with two arcs sketched in Figure \ref{fig:network_2arcs}. 

We still  denote by $\Dx$ the  spatial discretization step, that we assume  to be the same for both arcs for simplicity reasons and by $\Dt$ the time step.
We discretize the left arc (resp. right arc)  using $J_{\ell}$ (resp. $J_{r}$)  cells  
$$
\mathcal C_{j, \ell}=[x_{j-1/2, \ell},x_{j+1/2,\ell}], \, 1\leq j\leq J_{\ell} \quad\mbox{ (resp. }\mathcal C_{j, r}=[x_{j-1/2, r},x_{j+1/2,r}], \, 1\leq j\leq J_{r}\mbox{)}
$$
centered at the nodes $x_{j, \ell}=(j-\frac 1 2)\Dx, \, 1\leq j\leq J_{\ell}$ (resp. $x_{j,r}=(j-\frac 1 2)\Dx, \, 1\leq j\leq J_{r}$).

On each arc we solve the system \eqref{eq:vector} by the approximation \eqref{schemenum}-\eqref{flux2} with $\lambda=\lambda_\ell$ and $\lambda=\lambda_r$ for arc $\ell$ and arc $r$ respectively. Again, more general speeds could be considered, but we limit this presentation to the simplest case. 

Boundary conditions at the extremities of the network are simply deduced from \eqref{bord-gauche}-\eqref{bord-droit}, that is to say
\begin{equation}\label{bord-gauche2}
\rho_{0, \ell}=\rho_{1, \ell} \text{ and } q_{0, \ell}=-q_{1, \ell}.
\end{equation}
and 
\begin{equation}\label{bord-droit2}
\rho_{J_{r},r}=\rho_{J_{r}+1, r} \text{ and } q_{J_{r}, r}=-q_{J_{r}+1, r}.
\end{equation}

\subsection{Numerical  treatment of the junction}\label{numerical_junction}

The current objective is to establish 
precise and consistent conditions at the junction of the two arcs, that is to say  \textit{consistent} values for the ghost cells at the junction, $\mathcal  C_{J_{\ell}+1, \ell}$ and $\mathcal  C_{0, r}$. 

\begin{notations} From now on we shall use the following notations, see notations in blue in Figure \ref{fig:notation}~:
\begin{itemize}
\item $U^*_{\ell}=(\rho^*_{\ell},q^*_{\ell})$ and $U^*_{r}=(\rho^*_{r},q^*_{r})$ represent the unknown values at the junction on the ghost cell $\mathcal  C_{J_{\ell}+1, \ell}$ on arc $\ell$ and on the cell $\mathcal  C_{0, r}$ on arc $r$, respectively;
\item $U^-_{\ell}=(\rho^-_{\ell},q^-_{\ell})=(\rho_{J_\ell,\ell},q_{J_\ell,\ell})$ denotes the values on the last cell $\mathcal  C_{J_{\ell}, \ell}$ on arc $\ell$;
\item $U^+_{r}=(\rho^+_{r},q^+_{r})=(\rho_{1,r},q_{1,r})$ stands for values on the first cell $\mathcal  C_{1,r}$ on arc $r$,
\item the notation $v_{j,a}^n$ will be employed to denote  variable $v$ ($v=\rho$ or $q$) with $j$ the cell index, $a$ denotes the arc and can take the values $a=\ell$ or $a=r$ and $n$ denotes the time index;
\item in the relaxation setting, $f^{-}_{i,\ell}$ and $f^{+}_{i,r}$, $i=1,2$, denote the left and right states and $f^{*}_{i,\ell} $ and  $f^{*}_{i,r}$,  $i=1,2$, denote the left and right intermediate states, see Figure \ref{fig:notation}.
\end{itemize}
\end{notations}
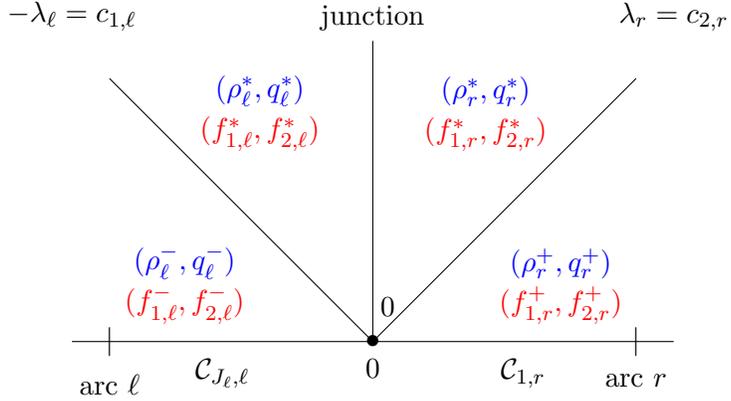
\begin{figure}[htbp!]
\begin{center}
\begin{tikzpicture}
\draw  (-4,0) -- (4,0) ;
\draw  (0,0) -- (0,4) ;
\draw (0,0) node {$\bullet$} ;
\draw (-3.5,0) node {$|$} ;
\draw (3.5,0) node {$|$} ;
\draw (0,4) node[above]{junction} ;
\draw (.2,.2) node[above]{$0$} ;
\draw (0,-.1) node[below]{$0$} ;
\draw (-3.5,-0.3) node[below]{arc $\ell$} ;
\draw (3.5,-0.3) node[below]{arc $r$} ;
\draw (-2,-0.1) node[below]{$\mathcal  C_{J_{\ell}, \ell}$ } ;
\draw (2,-0.1) node[below]{$\mathcal  C_{1,r}$} ;
\draw  (0,0) -- (3.5,3.5) ;
\draw  (0,0) -- (-3.5,3.5) ;
\draw (4,4) node[above]{$\lambda_{r}=c_{2,r}$} ;
\draw (-4,4) node[above]{$-\lambda_{\ell}=c_{1,\ell}$} ;
\draw (-2.5,0.7) node[above]{\textcolor{blue}{$(\rho^-_{\ell},q^-_{\ell})$}} ;
\draw (2.5,0.7) node[above]{\textcolor{blue}{$(\rho^+_{r},q^+_{r})$}} ;
\draw (-1.5,3) node[above]{\textcolor{blue}{$(\rho^*_{\ell},q^*_{\ell})$}} ;
\draw (1.5,3) node[above]{\textcolor{blue}{$(\rho^*_{r},q^*_{r})$}} ;
\draw (-2.5,0.1) node[above]{\textcolor{red}{$(f_{1,\ell}^{-},f_{2,\ell}^{-})$}} ;
\draw (2.5,0.1) node[above]{\textcolor{red}{$(f_{1,r}^{+},f_{2,r}^{+})$}} ;
\draw (-1.5,2.4) node[above]{\textcolor{red}{$(f_{1,\ell}^{*},f_{2,\ell}^{*})$}} ;
\draw (1.5,2.4) node[above]{\textcolor{red}{$(f_{1,r}^{*},f_{2,r}^{*})$}} ;
\end{tikzpicture}
\end{center}
\caption{Notations for the quantities at the junction : in blue,  $(\rho^-_{\ell},q^-_{\ell})$ and  $(\rho^+_{r},q^+_{r})$ are the left and right states and $(\rho^*_{\ell},q^*_{\ell})$ and  $(\rho^*_{r},q^*_{r})$ the two intermediate states. In red, the two corresponding   vectors $f_{1}$ and $f_{2}$ for the relaxation approximation.} 
\label{fig:notation}
\end{figure}

Therefore, to complement the scheme described in Section \ref{HLL2arcs}, we need now to find some values for the four quantities at the junction, i.e. $q^*_{\ell}, q^*_{r} , \rho^*_{\ell} $ and $\rho^*_{r}$.
As stated above, we need to impose some transmission conditions at the junction. We consider a discrete version of 
the equality of the fluxes \eqref{jcegal} in order to have the mass conservation at the junction 
and the (JTC) transmission conditions \eqref{jc} linking the fluxes with the difference of densities.
Namely, with the notations introduced above, 
\begin{equation*}
	 q^*_{\ell} = q^*_{r} = \kappa (\rho^*_{\ell}-\rho^*_{r}).
\end{equation*}
Since we have a total of $4$ unknowns to be found at the junction, we need to impose two supplementary conditions.
Let us now consider the relaxation system \eqref{eq:fsyst} on both arcs $\ell$ and $r$ with $c_{1,_\ell}=-c_{2,_\ell}=\lambda_\ell>0$ and $c_{1,r}=-c_{2,r}=\lambda_r>0$ respectively. By the exact Riemann solver \eqref{eq:fRsolver} at the junction and considering that the junction stands for a shock with $0$-velocity, we get
\begin{align}
f^{*}_{2,\ell} &= f^{-}_{2,\ell} \mbox{ on arc }\ell, \label{eq:fRsolJ_sx}
\\
f^{*}_{1,r} &= f^{+}_{1,r} \mbox{ on arc }r, \label{eq:fRsolJ_dx}
\end{align} 
where here $f^{*}_{i,\ell} $ (resp. $f^{*}_{i,r}$), $  i=1,2 $ represents the (unknown) value of the solution for $-\lambda_{l}<x/t<0$ (resp. for $0<x/t<\lambda_{r}$), $f^{-}_{i,\ell}$ is the solution for $x/t<-\lambda_\ell$ on arc $\ell$ and $f^{+}_{i,r}$ the solution for $x/t>\lambda_r$ on arc $r$. 

Switching to the first components of maxwellian functions given by \eqref{eq:maxwellian_simpl}, we can rewrite \eqref{eq:fRsolJ_sx} and \eqref{eq:fRsolJ_dx} as
\begin{equation*}\left\{
\begin{aligned}
		\frac{1}{2}\left(\rho^*_{\ell}+\frac{q^*_{\ell}}{\lambda_{\ell}}\right) &= \frac{1}{2}\left(\rho^-_{\ell}+\frac{q^-_{\ell}}{\lambda_{\ell}}\right),
		\\
\frac{1}{2}\left(\rho^*_{r}-\frac{q^*_{r}}{\lambda_{r}}\right) &= \frac{1}{2}\left(\rho^+_{r}-\frac{q^+_{r}}{\lambda_{r}}\right),
	\end{aligned}\right.
	\end{equation*}
	which gives us the two extra conditions at the junction we were looking for.
	
In summary, at the junction, we need to solve the following linear system for the unknowns $q^{*}=q^*_{\ell} = q^*_{r},\; \rho^*_{\ell}$ and $\rho^*_{r}$~:
\begin{equation}\label{eq:junc1}
\left\{\begin{array}{l}
q^{*}= \kappa (\rho^*_{\ell}-\rho^*_{r}),
\medskip\\
\rho^*_{\ell}+\ds\frac{q^{*}}{\lambda_{\ell}} = \rho^-_{\ell}+\frac{q^-_{\ell}}{\lambda_{\ell}},
\medskip\\
\rho^*_{r}-\ds\frac{q^{*}}{\lambda_{r}} = \rho^+_{r}-\frac{q^+_{r}}{\lambda_{r}}.
\end{array}\right.
\end{equation}
This system  can be solved explicitely as~:
\begin{equation}\label{jonction}
\left\{\begin{array}{l}
q^{*}=q^*_{ \ell}=q^*_{r}=\frac{\kappa}{\kappa(\lambda_{\ell}+\lambda_{r})+\lambda_{\ell}\lambda_{r}} (\lambda_{r}q^-_{\ell}+\lambda_{\ell}q^+_{ r}+\lambda_{\ell}\lambda_{r}( \rho^-_{\ell}-\ \rho^+_{ r})),\\
\rho^*_{ r}=\rho^+_{r}+\frac{1}{\lambda_{r}}(q^{*}-q^+_{ r}),\\
\rho^*_{ \ell}=\rho^-_{\ell}-\frac{1}{\lambda_{\ell}}(q^{*}-q^-_{\ell}).
\end{array}\right.
\end{equation}
The solution can then be used as boundary values for the arcs on the ghost cells at the junction. 

Notice that, the  system given by \eqref{eq:junc1} describing the quantities at the junction, can be recovered looking at the numerical flux  $\mathcal F$ by considering directly  the discretization of  system \eqref{GasDynamicLeft}-\eqref{jc}.

We still use here the previous notations, namely, we denote by 
$U^-_{\ell}=(\rho^-_{\ell},q^-_{\ell})=(\rho_{J_\ell,\ell},q_{J_\ell,\ell})$ (resp. $U^+_{r}=(\rho^+_{r},q^+_{r})=(\rho_{1,r},q_{1,r})$) the values on the last cell $\mathcal  C_{J_{\ell}, \ell}$ on arc $\ell$ (resp. on the first cell $\mathcal  C_{1,r}$ on arc $r$).

At the junction, the mass conservation at the discrete level  gives the condition
\begin{equation}\label{jc_discret_mass}
\mathcal{F}^{\rho}(U^-_{\ell},U^*_{\ell})=\mathcal{F}^{\rho}(U^*_{r},U^+_{r})
 \end{equation}
 that is coupled with the following relations, coming from the discretization of conditions \eqref{jc}~: 
\begin{equation*}
 \mathcal{F}^{\rho}(U^-_{\ell},U^*_{\ell})=\mathcal{F}^{\rho}(U^*_{r},U^+_{r})=\kappa (\rho^*_{\ell}-\rho^*_{r}).
 \end{equation*}
Again, we have $4$ unknowns at the junction and we need two supplementary conditions. We find them by imposing the equality between the numerical flux for the mass equation $F^{\rho}=\rho u=q $ and the numerical discretization for the momentum $q=\rho u$, namely~: 
\begin{equation}\label{fluxJunc}
\mathcal{F}^{\rho}(U^-_{\ell},U^*_{\ell})=q^*_{\ell}, \quad    \mathcal{F}^{\rho}(U^*_{r},U^+_{r})=q^*_{ r}.
\end{equation}

We obtain therefore a linear system of $4$ equations with $4$ unknowns~:
\begin{equation}\label{bc_junction}
\left\{\begin{array}{l}
\mathcal{F}^{\rho}(U^-_{\ell},U^*_{ \ell})=\mathcal{F}^{\rho}(U^*_{ r},U^+_{ r})=\kappa (\rho^*_{ \ell}-\rho^*_{ r}), \\
\mathcal{F}^{\rho}(U^-_{\ell},U^*_{ \ell})=q^*_{\ell},\\
\mathcal{F}^{\rho}(U^*_{ r},U^+_{r})=q^*_{ r}.
\end{array}\right.
\end{equation}
Note that system \eqref{bc_junction}  holds  for any  numerical flux $\mathcal{F}$ and not only for  the HLL flux.

In the case of the HLL numerical flux, the previous system resumes to 
\begin{equation}\label{system_junction}
\left\{\begin{array}{l}
q^{*}=q^*_{\ell}=q^*_{r}
=\kappa (\rho^*_{\ell}-\rho^*_{r}), \\
\ds \frac{ q^-_{\ell}+ q^{*}}{2}-\frac{\lambda_{\ell}}{2}(\rho^*_{\ell}-\rho^-_{\ell})=q^{*},\\
\ds\frac{ q^{*}+ q^+_{r}}{2}-\frac{\lambda_{r}}{2}(\rho^+_{ r}-\rho^*_{ r})=q^{*},\
\end{array}\right.
\end{equation}
which is the same as system~\eqref{eq:junc1}.

\medskip

\begin{remark}
Let us briefly compare the two junction conditions that have been considered in this paper, i.e.  system \eqref{eq:junc1}  and system \eqref{eq:1in1_Rpb}.

\begin{itemize}
\item
Note that system \eqref{eq:junc1}  admits always a solution, both for subsonic and supersonic states, unlike system \eqref{eq:1in1_Rpb}. It can therefore be seen as an extension of system \eqref{eq:1in1_Rpb} that works in both cases. 
\item
Then, Riemann conditions leading to system \eqref{eq:1in1_Rpb} should not preserve the mass conservation at the discrete level, although conditions in system \eqref{eq:junc1} are designed to do so.
\item Finally, the reasoning for Riemann conditions at Subsection \ref{sec:RSsubsonic} only holds for a power $\gamma$ in Eq.\eqref{eq:pfunct} such that $1 <\gamma \leq 3$, whereas system \eqref{eq:junc1}  holds a solution for every $\gamma>1$. Note that the parameter $\gamma$  is only present in Eq.\eqref{eq:junc1} through the values of $\lambda_{\ell}$ and $\lambda_{r}$.
\end {itemize}
\end{remark}

\begin{remark} In the subsonic region, the two last  conditions given in \eqref{eq:junc1} 
may be viewed as a local linearization of relations \eqref{eq:curve_cond_Rpb}.
In fact, substituting \eqref{eq:curve_cond_Rpb} with the tangent lines of the two Riemann invariants in the points $(\rho^-_{\ell},q^-_{\ell})$ and $(\rho^+_{r},q^+_{r})$ respectively, we get
\begin{equation}\label{eq:sysRiemann_tg_lin}
\left\{\begin{array}{l}
q_\ell^* (\rho)= q^-_{\ell} + \mu_1(\rho^-_{\ell},q^-_{\ell}) (\rho-\rho^-_{\ell}),
\smallskip\\
q_r^*(\rho) = q^+_{r} + \mu_2(\rho^+_{r},q^+_{r}) (\rho-\rho^+_{r}).
\end{array}\right.
\end{equation}
On the other hand, the 
 two last equations in \eqref{eq:junc1} can be rewritten as
\begin{equation}\label{eq:rette_cond_mr}
\left\{\begin{array}{l}
q_\ell^* = q^-_{\ell}  - \lambda_{\ell} (\rho_\ell^*-\rho^-_{\ell}),
\smallskip\\
q_r^* = q^+_{r} + \lambda_{r} (\rho_r^*-\rho^+_{r}).
\end{array}\right.
\end{equation}
Then, the two unknown values $U_\ell^*$ and $U_r^*$ belong to the decreasing line 
$q(\rho) = q^-_{\ell} - \lambda_{\ell} (\rho -\rho^-_{\ell})$ and to the increasing line $q(\rho) = q^+_{r} + \lambda_{r} (\rho -\rho^+_{r})$ respectively.
Since we are assuming $\mu_1(\rho^-_{\ell},q^-_{\ell})<0$ and $\mu_2(\rho^+_{r},q^+_{r})>0$, \eqref{eq:sysRiemann_tg_lin} reads as
\begin{equation}\label{eq:sysRiemann_sub_lin}
\left\{\begin{array}{l}
q_\ell^* = q^-_{\ell} - |\mu_1(\rho^-_{\ell},q^-_{\ell})| (\rho_\ell^*-\rho^-_{\ell}),
\smallskip\\
q_r^* = q^+_{r} + \mu_2(\rho^+_{r},q^+_{r}) (\rho_r^*-\rho^+_{r}).
\end{array}\right.
\end{equation}
The two systems \eqref{eq:rette_cond_mr} and \eqref{eq:sysRiemann_sub_lin} differ only in the choice of the angular coefficient of the two lines.
\end{remark}

\subsection{Consistency properties of the numerical scheme}\label{properties}

\subsubsection{Mass conservation}

First we prove the following property, which is equivalent to the continuous mass preservation \eqref{consmass2}.

\begin{prop}
Let us  consider system \eqref{GasDynamicLeft}--\eqref{jc} set on a $2$-arc network displayed in Figure\ref{fig:network_2arcs} with boundary conditions \eqref{bc} on the outer nodes and junction conditions \eqref{jc}.  The numerical scheme \eqref{schemenum} with  discrete boundary conditions \eqref{bord-gauche2}-\eqref{bord-droit2} on the outer nodes and discrete junction conditions \eqref{bc_junction} is mass-preserving, that is to say 
the discrete mass $\ds m^n={h} \sum_{j=1}^{J_{\ell}} \rho_{j,\ell}^n+h \sum_{j=1}^{J_{r}} \rho_{j,r}^n $ is independent of $n \in \N$.
\end{prop}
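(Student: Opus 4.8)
The plan is to establish the discrete conservation by a telescoping argument that mirrors the continuous computation leading to \eqref{consmass2}. First I would write the first (density) component of the scheme \eqref{schemenum} on each arc separately, namely $\rho_{j,\ell}^{n+1} = \rho_{j,\ell}^n - \frac{\Dt}{h}\big(\mathcal{F}^\rho_{j+1/2,\ell} - \mathcal{F}^\rho_{j-1/2,\ell}\big)$ and analogously on arc $r$ (with $h=\Dx$), where $\mathcal{F}^\rho$ denotes the first component of the numerical flux. Multiplying by $h$ and summing over $j=1,\dots,J_\ell$ on the left arc and over $j=1,\dots,J_r$ on the right arc, all interior fluxes cancel in consecutive pairs and only the fluxes at the two outer nodes and at the junction survive, giving
\begin{equation*}
m^{n+1} - m^n = -\Dt\Big[ \big(\mathcal{F}^\rho_{J_\ell+1/2,\ell} - \mathcal{F}^\rho_{1/2,\ell}\big) + \big(\mathcal{F}^\rho_{J_r+1/2,r} - \mathcal{F}^\rho_{1/2,r}\big)\Big].
\end{equation*}
Here $\mathcal{F}^\rho_{1/2,\ell}$ and $\mathcal{F}^\rho_{J_r+1/2,r}$ are the fluxes at the two outer nodes, while $\mathcal{F}^\rho_{J_\ell+1/2,\ell} = \mathcal{F}^\rho(U^-_\ell, U^*_\ell)$ and $\mathcal{F}^\rho_{1/2,r} = \mathcal{F}^\rho(U^*_r, U^+_r)$ are the density fluxes on either side of the junction, using the ghost-cell identifications from Section~\ref{numerical_junction}.

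Next I would show that the two outer boundary fluxes vanish. Using the HLL density flux \eqref{flux2}, that is $\mathcal{F}^\rho(U^-, U^+) = \frac{1}{2}(q^- + q^+) - \frac{\lambda}{2}(\rho^+ - \rho^-)$, together with the discrete Neumann conditions \eqref{bord-gauche2}, which give $\rho_{0,\ell} = \rho_{1,\ell}$ and $q_{0,\ell} = -q_{1,\ell}$, one finds $\mathcal{F}^\rho_{1/2,\ell} = \frac{1}{2}(q_{0,\ell} + q_{1,\ell}) - \frac{\lambda_\ell}{2}(\rho_{1,\ell}-\rho_{0,\ell}) = 0$. The identical computation with \eqref{bord-droit2} yields $\mathcal{F}^\rho_{J_r+1/2,r} = 0$. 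This reduces the displayed relation to $m^{n+1} - m^n = -\Dt\big(\mathcal{F}^\rho(U^-_\ell, U^*_\ell) - \mathcal{F}^\rho(U^*_r, U^+_r)\big)$.

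Finally, the two remaining junction fluxes are equal \emph{by construction}: the first line of the junction conditions \eqref{bc_junction}, equivalently the discrete mass-conservation condition \eqref{jc_discret_mass}, states precisely that $\mathcal{F}^\rho(U^-_\ell, U^*_\ell) = \mathcal{F}^\rho(U^*_r, U^+_r)$. Substituting this into the reduced relation gives $m^{n+1} = m^n$, which is the asserted independence of $n$. I do not expect any genuine obstacle here, since the junction conditions were designed exactly to enforce the equality of the density flux across the junction; the only point requiring care is the bookkeeping of the orientations and sign conventions at the junction and at the outer nodes, so that the telescoping leaves exactly the junction flux difference $\mathcal{F}^\rho(U^-_\ell,U^*_\ell) - \mathcal{F}^\rho(U^*_r,U^+_r)$ together with the two outer fluxes, and then the verification that the latter vanish. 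I also note that the argument uses only the mass-conservation line of \eqref{bc_junction} and the vanishing of the outer fluxes, so it holds for any consistent numerical flux and not just for the HLL choice.
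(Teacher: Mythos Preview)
Your proof is correct and follows exactly the approach sketched in the paper: take the first component of the scheme \eqref{schemenum}, telescope, and use the outer boundary conditions \eqref{bord-gauche2}--\eqref{bord-droit2} together with the junction relation \eqref{jc_discret_mass}. The paper's proof is a one-sentence outline of precisely this computation; you have simply written out the details.
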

\begin{proof}
We consider the first component of equation \eqref{schemenum}. The conservation of the discrete mass is given by  the boundary conditions for the outer nodes \eqref{bord-gauche2}-\eqref{bord-droit2} and  by condition \eqref{jc_discret_mass} at the junction.\end{proof}

\subsubsection{Positivity of the solution}

Now, we prove   that the positivity of the solution is preserved in the particular case of HLL numerical flux~:
\begin{prop}\label{positivity}
Let us consider scheme \eqref{schemenum} with flux \eqref{flux2}, boundary conditions  \eqref{bord-gauche2} -- \eqref{bord-droit2} and condition \eqref{system_junction} at the junction in order to discretize system \eqref{GasDynamicLeft}-\eqref{jc} on a $2$-arc network.
We define $\lambda^n_{\ell}>0$ (resp. $\lambda^n_{r}>0$) such that
\begin{equation}  \label{subchar_pos}
\lambda^n_{\ell}\geq\maxd_{1 \leq j \leq J_{\ell}}{|u^n_{j, \ell}|+ \sqrt{p'(\rho^n_{j, \ell})}} (\text{ resp. } \lambda^n_{r}\geq\maxd_{1 \leq j \leq J_{r}}{|u^n_{j, r}|+ \sqrt{p'(\rho^n_{j, r})}}).\end{equation}

 If the initial condition is positive, that is to say 
 \[
  \rho^0_{j,\ell}\geq 0 \text{ for all } 1  \leq j \leq J_{\ell} \,  (\text{ resp. } \rho^0_{j,r} \geq 0 \text{ for all }  1 \leq j \leq J_{r}),
  \]
   then the solution remains positive in time, that is to say 
    \[
  \rho^n_{j,\ell}\geq 0 \text{ for all }  1 \leq j \leq J_{\ell} \, (\text{ resp. } \rho^n_{j,r} \geq 0 \text{ for all }  1 \leq j \leq J_{r}), \text{ for all }  n \in \N.
  \]
 \end{prop}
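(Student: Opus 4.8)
The plan is to argue by induction on the time index $n$, showing that at each cell $\rho^{n+1}$ is produced as a combination, with \emph{nonnegative} coefficients, of the densities $\rho^n$ at the previous step, so that nonnegativity propagates. Everything hinges on the explicit density component of the HLL flux \eqref{flux2}, namely $\mathcal{F}^\rho(U^-,U^+)=\frac12(q^-+q^+)-\frac{\lambda}{2}(\rho^+-\rho^-)$, together with the subcharacteristic bounds $|u^n_{j,a}|\le\lambda^n_a$ granted by \eqref{subchar_pos} and a CFL restriction $\lambda^n_a\Dt/\Dx\le 1$ (slightly reinforced near the junction when $\lambda_\ell\neq\lambda_r$), which I assume throughout.

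For an interior cell I would substitute $q=\rho u$ into the first component of \eqref{schemenum}--\eqref{flux2} and regroup by the neighbouring densities, which gives
\begin{equation*}
\rho^{n+1}_j=\Big(1-\frac{\lambda\Dt}{\Dx}\Big)\rho_j+\frac{\Dt}{2\Dx}(\lambda+u_{j-1})\rho_{j-1}+\frac{\Dt}{2\Dx}(\lambda-u_{j+1})\rho_{j+1}.
\end{equation*}
The two outer coefficients are nonnegative because $u_{j-1}\ge-\lambda$ and $u_{j+1}\le\lambda$ by \eqref{subchar_pos}, and the central one is nonnegative by the CFL condition; since the three densities are nonnegative by the induction hypothesis, so is $\rho^{n+1}_j$. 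At the external nodes I would use the ghost values \eqref{bord-gauche2}--\eqref{bord-droit2}: they make the boundary flux vanish (on the left, $\rho_{0,\ell}=\rho_{1,\ell}$ and $q_{0,\ell}=-q_{1,\ell}$ give $\mathcal{F}^\rho(U_{0,\ell},U_{1,\ell})=0$), and the update of the first interior cell reduces again to a combination of $\rho_{1,\ell}$ and $\rho_{2,\ell}$ with nonnegative coefficients.

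The delicate point, and the main obstacle, is the junction, because there the ghost states are fixed by the jump condition rather than by a standard Riemann solve. For the last cell $\mathcal C_{J_\ell,\ell}$ of arc $\ell$ the same regrouping, now with the ghost value $U^*_\ell$, yields
\begin{equation*}
\rho^{n+1}_{J_\ell,\ell}=\Big(1-\frac{\lambda_\ell\Dt}{\Dx}\Big)\rho_{J_\ell,\ell}+\frac{\Dt}{2\Dx}(\lambda_\ell+u_{J_\ell-1,\ell})\rho_{J_\ell-1,\ell}+\frac{\Dt\lambda_\ell}{\Dx}\,M_1^\rho(U^*_\ell),
\end{equation*}
since $\frac12(\lambda_\ell-u^*_\ell)\rho^*_\ell=\lambda_\ell M_1^\rho(U^*_\ell)$, where $M_1^\rho(U^*_\ell)=\frac12(\rho^*_\ell-q^*/\lambda_\ell)$ is the density component of the Maxwellian \eqref{eq:maxwellian_simpl} at the intermediate state. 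The first two terms are nonnegative as before, so positivity of $\rho^{n+1}_{J_\ell,\ell}$ reduces to $M_1^\rho(U^*_\ell)\ge 0$; symmetrically, the update of $\mathcal C_{1,r}$ reduces to $M_2^\rho(U^*_r)=\frac12(\rho^*_r+q^*/\lambda_r)\ge 0$. These two inequalities are the crux.

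To establish them I would insert the explicit junction solution \eqref{jonction}, which rewrites $M_1^\rho(U^*_\ell)=M_2^\rho(U^-_\ell)-q^*/\lambda_\ell$ and $M_2^\rho(U^*_r)=M_1^\rho(U^+_r)+q^*/\lambda_r$, where $M_2^\rho(U^-_\ell)=\frac{\rho^-_\ell}{2}(1+u^-_\ell/\lambda_\ell)\ge 0$ and $M_1^\rho(U^+_r)=\frac{\rho^+_r}{2}(1-u^+_r/\lambda_r)\ge 0$ are nonnegative by the strict subcharacteristic inequalities and the induction hypothesis $\rho^-_\ell,\rho^+_r\ge 0$. Eliminating $\rho^*_\ell,\rho^*_r$ with the jump relation of \eqref{eq:junc1} gives $q^*=2\kappa\big(M_2^\rho(U^-_\ell)-M_1^\rho(U^+_r)\big)\big/\big(1+\kappa/\lambda_\ell+\kappa/\lambda_r\big)$, so $q^*$ carries the sign of $M_2^\rho(U^-_\ell)-M_1^\rho(U^+_r)$; distinguishing the two signs and using $0<2\kappa\big/\big(1+\kappa/\lambda_\ell+\kappa/\lambda_r\big)<2\lambda_\ell$ to bound $q^*$, one verifies that both intermediate Maxwellian densities remain nonnegative. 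I expect the only genuine work to be this uniform-in-$\kappa$ sign control: when $\lambda_\ell=\lambda_r$ the two cases collapse and one finds directly $M_1^\rho(U^*_\ell)=\big(\lambda_\ell M_2^\rho(U^-_\ell)+2\kappa M_1^\rho(U^+_r)\big)\big/(\lambda_\ell+2\kappa)\ge 0$, whereas for unequal speeds one instead absorbs the flux $q^*$ into the interior contribution of the adjacent cell, which is precisely where the (reinforced) CFL restriction is consumed.
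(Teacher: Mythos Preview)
Your proposal is correct and follows exactly the paper's strategy: induction on $n$, the subcharacteristic bound $|u^n_{j,a}|\le\lambda^n_a$, and the explicit junction formulas \eqref{jonction} to control the ghost states. The paper's own proof is a two-line sketch that only records the intermediate claim $\rho^{*,n}_\ell,\rho^{*,n}_r\ge 0$; your write-up supplies the missing details and in fact sharpens the junction step by isolating the Maxwellian components $M_1^\rho(U^*_\ell),\,M_2^\rho(U^*_r)$ as the quantities that must be nonnegative. One minor remark: the bound $2\kappa/(1+\kappa/\lambda_\ell+\kappa/\lambda_r)<2\lambda_\ell$ you quote yields $\rho^*_\ell\ge 0$ (the paper's claim) rather than $M_1^\rho(U^*_\ell)\ge 0$ directly; it is your subsequent convex-combination identity in the case $\lambda_\ell=\lambda_r$ that actually closes the Maxwellian inequality, and the reinforced CFL you invoke for unequal speeds is a reasonable way to handle a subtlety the paper's sketch does not address either.
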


Notice that condition \eqref{subchar_pos} is just a formulation in this case of condition \eqref{subchar}, which is necessary to obtain the stability of the numerical scheme. 

\begin{proof}
We prove the proposition by induction. 

Assume that $\ds \rho^n_{j,\ell} \geq 0$ for all $1 \leq j \leq J_{\ell}$ (resp. $\ds  \rho^n_{j,r} \geq 0$ for all $1 \leq j \leq J_{r}$).
Since $\lambda^n_{\ell} \geq |u_{j, \ell}^n|$ for all $1 \leq j \leq J_{\ell}$ (resp.  $\lambda^n_{r} \geq |u^n_{j, r}|$ for all $1 \leq j \leq J_{r}$), we use the explicit expressions obtained at Eq.\eqref{jonction}  to prove that $\rho_{r}^{*,n}\geq 0$ and  $\rho_{ \ell}^{*,n}\geq 0$. From these two inequalities, we demonstrate that  $\rho_{j, \ell}^{n+1} \geq 0$ for all $1 \leq j \leq J_{\ell}$ (resp.  $\rho_{j, r}^{n+1} \geq 0$  for all $1 \leq j \leq J_{r}$). 
\end{proof}

\section{ Numerical tests in the case of a 2-arc network}\label{NumericalTests}

In this section, we examine the characteristics of the numerical method proposed in Section \ref{DEJunctionCondition}
and we evaluate its accuracy by examining a simple network consisting of two arcs. 
The pressure is given by the pressure law given at Eq.\eqref{eq:pfunct} with $\gamma=2$, unless another value is specified. 
 We therefore consider the numerical scheme defined by Eq.\eqref{schemenum} with flux \eqref{flux} on the two arcs, conditions \eqref{bord-gauche2} - \eqref{bord-droit2} at the extremities of the network  and conditions $\Jmr$ at the junction; this scheme will be referred as the HLL-$\Jmr$ scheme in the following. 
 
We consider a uniform spatial grid with a fixed spatial step $\Dx=0.05$ on the two arcs with equal length of $L_\ell=L_r=2$. Then, the number of cells verify $J_\ell=J_r=J$. 
The time step is dynamically computed at each time iteration $n$ by the CFL condition
\begin{equation*}
\Dt^n \leq \frac{\Dx}{\max\{\lambda^n_\ell,\lambda^n_r\}},
\end{equation*}
where $\lambda^n_\ell$ and $\lambda^n_r$ have been defined in Proposition \ref{positivity}, such that the subscharacteristic condition \eqref{subchar_pos} is verified.

\subsection{Numerical convergence results for the  numerical scheme HLL-$\Jmr$}
Let the following Gaussian function be given
\begin{equation*}
g(x;\sigma,c) = e^{-(x-c)^2/2\sigma^2}.
\end{equation*}
To study the numerical order of convergence of the scheme applied to the whole network, we set as initial data (at time $t=0$), on the left arc, for $x\in[0,L_\ell]$, 
\begin{equation}\label{eq:reg_init_data_left}
\rho_\ell(0,x) = 1.5 + g(x,0.2,0.8) \mbox{ and } q_\ell(0,x)=0.5,
\end{equation}
and on the right-arc, for $x\in[0,L_r]$,
\begin{equation}\label{eq:reg_init_data_right}
\rho_r(0,x) = 1 + g(x,0.1,0.5) \mbox{ and } q_r(0,x)=0.5.
\end{equation}
This initial condition is subsonic all over the network and verifies at the junction the continuity of the flux and the (JTC) condition $\KK$ with $\kappa=1$, i.e. 
$$
q_\ell(0,L_\ell)=q_r(0,0)= \rho_\ell(0,L_\ell)-\rho_r(0,0) = 0.5.
$$
The accuracy is measured by the $L^1$ norm of the error at the final time $T=1$, defined as
$$
\mbox{err}_{\rho, \ell}=\Dx\ds\sum_{j=1}^{J_{\ell}} | \rho^N_{j,{\ell}} - \rho^{ref}_{j,{\ell}} |,
\quad \mbox{err}_{q, \ell}=\Dx\ds\sum_{j=1}^{J_{\ell}} | q^N_{j,{\ell}} - q^{ref}_{j,{\ell}} |,
$$
with the same definition for arc $r$, where $(\rho^{ref}_\cdot,q^{ref}_\cdot)$ is a reference solution obtained with a small $\Dx\sim 10^{-5}$ on both arcs $\ell$ and $r$.
Considering the system $\Jmr$ at the junction, Table \ref{tab:err_ord_gauss_mr} shows that the conditions imposed at the junction do not affect the convergence order of the scheme, which remains first order.

Furthermore, we show that the system $\Jmr$ does not alter the asymptotic behavior of the solution along the entire 2-arc network.
We consider again the initial data \eqref{eq:reg_init_data_left}--\eqref{eq:reg_init_data_right} and calculate the rate of convergence of the solution to the expected asymptotic state 
\begin{equation}\label{asymptotic}
\bar\rho_{asympt} = \ds\frac{1}{L_\ell+L_r}\int_0^{L_\ell+L_r} \left(\rho_\ell(0,x)+\rho_r(0,x)\right) dx, \; \bar q_{asympt} =0.
\end{equation}
Given N data points $(t_i , e_{\rho, \ell}(t_i ))_{i=1,\cdots, N}$ , we shall define $\gamma$ and $C$ as the solution of the following least square problem,
\begin{equation}\label{eq:opt_prb}
\min_{C,\gamma} \sum_{i=1}^N | \ln{(e_{\rho, \ell}(t_i))} - \ln{(C\,t_i^{-\gamma})} |^2,
\end{equation}
where $e_{\rho, \ell}(t_i) = \| \rho_{\ell}(t_i,\cdot) - \bar\rho_{asympt} \|_{L^1,L^\infty}$, with the same definition for $e_{\rho, r}$, $e_{q, \ell}$ and $e_{q, r}$.
Table \ref{tab:vel_simpt} presents the optimal values of $\gamma$ and $C$. Once again, the system imposed at the junction does not alter the asymptotic behavior in time of the solution,  see \cite{GL}, where it is proved that the solution decays as $t^{-1}$, in the case of a single interval.

\begin{table}[h!]
\centering
\begin{tabular}{|c|cc|cc|cc|cc|}
\hline
 & \multicolumn{2}{|c|}{$\rho_\ell$} &\multicolumn{2}{|c|}{$\rho_r$} & \multicolumn{2}{|c|}{$q_\ell$} & \multicolumn{2}{|c|}{$q_r$}\\
\hline
$\Delta x$ & $\mbox{err}_{\rho, \ell}$  & order & $\mbox{err}_{\rho, r}$ & order & $\mbox{err}_{q, \ell}$ & order & $\mbox{err}_{q,r}$ & order \\
\hline
0.0063 & 0.02651 & 0.70286 & 0.04090 & 0.81492 & 0.05104 & 0.77079 & 0.06378 & 0.63251 
\\\hline
0.0031 & 0.01495 & 0.82633 & 0.02151 & 0.92712 & 0.02727 & 0.90406 & 0.03558 & 0.84207 
\\\hline
0.0016 & 0.00743 & 1.00890 & 0.01030 & 1.06292 & 0.01407 & 0.95527 & 0.01742 & 1.02998 
\\\hline
0.0008 & 0.00334 & 1.15396 & 0.00453 & 1.18482 & 0.00655 & 1.10264 & 0.00775 & 1.16828 
\\\hline
\end{tabular}
\caption{$L^1$ error and order of convergence of the numerical scheme HLL-$\Jmr$ on both arcs starting from the regular initial data \eqref{eq:reg_init_data_left}--\eqref{eq:reg_init_data_right}.}\label{tab:err_ord_gauss_mr}
\end{table}

\begin{table}[h!]
\centering
\begin{tabular}{c|cc|cc|cc|cc|}
\hline
 &\multicolumn{2}{|c|}{$\rho_\ell$} &\multicolumn{2}{|c|}{$\rho_r$} & \multicolumn{2}{|c|}{$q_\ell$} & \multicolumn{2}{|c|}{$q_r$}\\
\hline
&C  & $\gamma$ & C  & $\gamma$ & C  & $\gamma$ & C  & $\gamma$ \\
\hline
$L^1$ & 0.2356 & 1.0248 & 0.2301 & 1.0274 & 0.5642 & 0.9802 & 0.5193 & 0.9544
\\\hline
$L^\infty$ & 0.2295 & 1.0558 & 0.2295 & 1.0558 & 0.5662 & 1.0493 & 0.5446 & 1.0371
\\\hline
\end{tabular}
\caption{Optimal values of $\gamma$ and $C$  given by the least square problem \eqref{eq:opt_prb} with $e_{\rho, \ell}(t_i) = \| \rho_\ell(t_i,\cdot) - \bar\rho_{asympt} \|_{L^1,L^\infty}$ for $i=1,\ldots,N$ computed in $L^1$ and $L^\infty$ norms, same definition for  $e_{\rho, r}$, $e_{q, \ell}$ and $e_{q, r}$.}\label{tab:vel_simpt}
\end{table}

\subsection{Comparison between the two numerical schemes  HLL-$\Jmr$ and  HLL-$\Jrs$}
Now, we also consider the so-called HLL-$\Jrs$ scheme, which is defined as the previous  HLL-$\Jmr$ scheme, unlike the junction condition which is taken here as the solution of system $\Jrs$ defined at Sec.\ref{RIJunctionCondition} thanks to Riemann invariant considerations.

In this section we compare the dynamics obtained by the two schemes HLL-$\Jmr$ and  HLL-$\Jrs$, examining the subsonic and supersonic cases. In all tests, we start from data verifying condition $\KK$ with $\kappa=1$ at the junction, i.e.
$$
q_\ell(0,L_\ell)=q_r(0,0)= \rho_\ell(0,L_\ell)-\rho_r(0,0).
$$
In the following figures, these initial data are drawn in dashed black lines. 

We consider the following test problems:
\begin{description}
\item[Test C1] Case of a subsonic initial datum, constant on both arcs,
\begin{equation*}\left\{\begin{array}{lll}
\rho_\ell(0,x)=4.5,& q_\ell(0,x)=0.5, & \mbox{ for all } x\in[0,L_\ell],
\\
\rho_r(0,x)=4,& q_r(0,x)=0.5,  & \mbox{ for all }x\in[0,L_r].
\end{array}\right.
\end{equation*}
As can be seen in  Figure \ref{fig:test1}, the two schemes HLL-$\Jmr$ and HLL-$\Jrs$ give exactly the same results on both the arcs and the junction. We also notice that solutions are converging  in large times to the asymptotic constant state described at Eq.\eqref{asymptotic}.
\begin{figure}[htbp!]
\centering
\begin{subfigure}[b]{0.45\textwidth}
\includegraphics[scale=0.48]{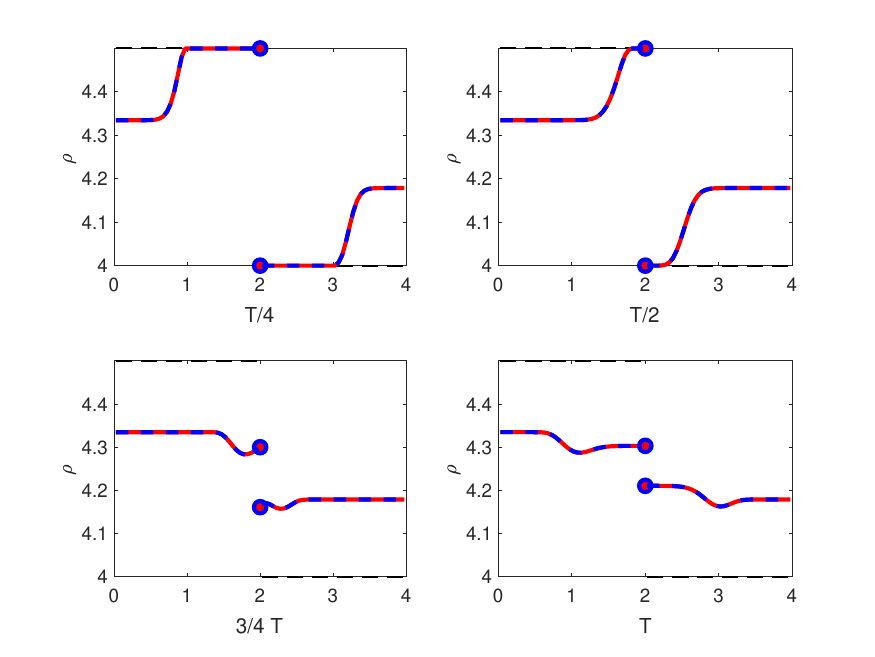}
\caption{Function $\rho$}
\end{subfigure}
\begin{subfigure}[b]{0.45\textwidth}
\includegraphics[scale=0.51]{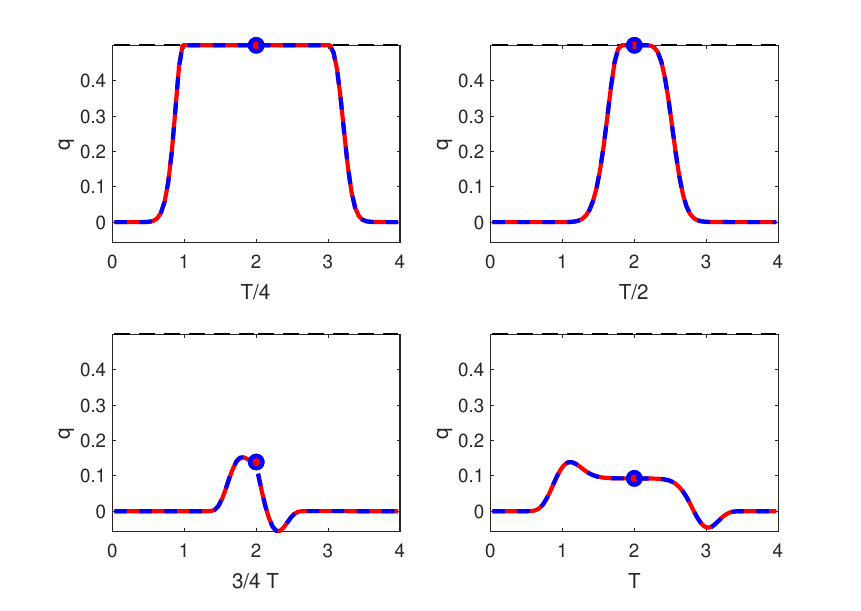}
\caption{Function $q$}
\end{subfigure}
\caption{\textbf{Test C1}. Comparison of the solutions given by scheme HLL-$\Jmr$ (in  solid red line) and by scheme HLL-$\Jrs$ (in dashed blue line), starting from a constant subsonic initial datum on both arcs. The blue and red dots represent the values at the junction. On the left, function $\rho$ and on the right, function $q$  for different times $t=0.25, \, 0.5$ and $0.75$  with final time $T=1$.}
\label{fig:test1}
\end{figure}

\item[Test C2] Case of a  non constant initial datum given by
\begin{equation*}\left\{\begin{array}{lcll}
\rho_\ell(0,x) = 0.4 + g(x,0.2,0.8) &\mbox{ and }& q_\ell(0,x)=0.1, &  \mbox{ for all } x\in[0,L_\ell],
\\
\rho_r(0,x) = 0.3 + g(x,0.1,0.5) &\mbox{ and }& q_r(0,x)=0.1, & \mbox{ for all } x\in[0,L_r],
\end{array}\right.
\end{equation*}
which is subsonic at the junction.
As can be seen in  Figure \ref{fig:test8}, the two approaches show some differences in the calculated values at the junction (look particularly at Figure \ref{fig:test8}-(a) at time $T/4$), but these do not propagate along the arcs and vanish  over time. Here again the solutions converge to the expected asymptotic profile for large times.
\begin{figure}[htbp!]
\centering
\begin{subfigure}[b]{0.45\textwidth}
\includegraphics[scale=0.5]{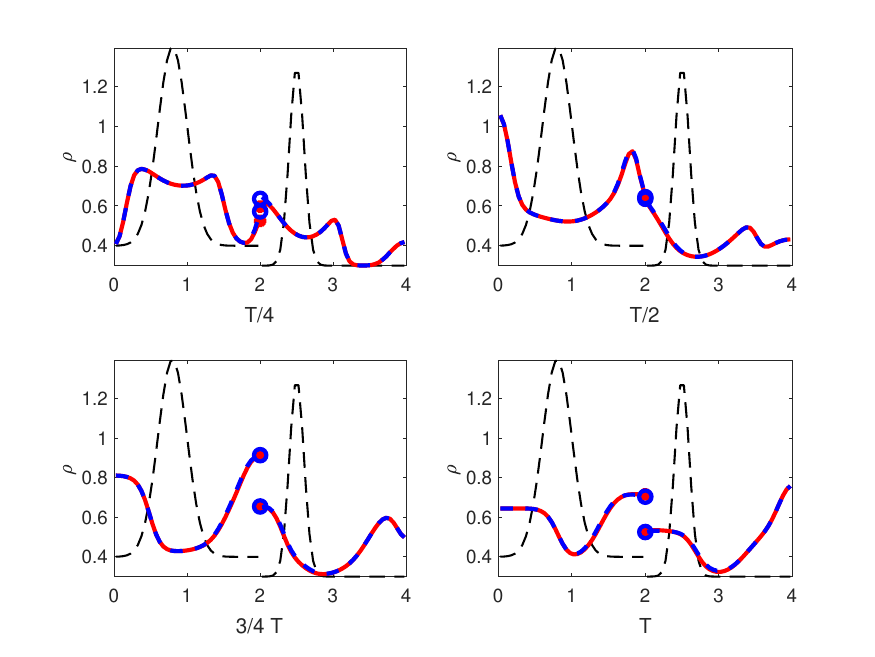}
\caption{Function $\rho$}
\end{subfigure}
\begin{subfigure}[b]{0.45\textwidth}
\includegraphics[scale=0.5]{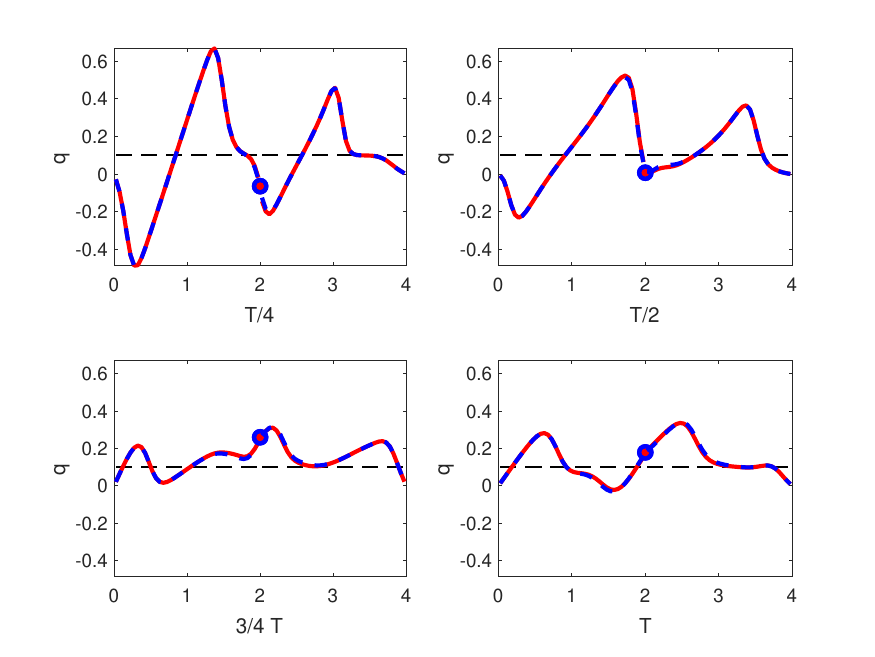}
\caption{Function $q$}
\end{subfigure}
\caption{\textbf{Test C2}. Comparison of the solutions given by scheme HLL-$\Jmr$ (in  solid red line) and by scheme HLL-$\Jrs$ (in dashed blue line), starting from a subsonic  non constant initial datum (in dashed black line). The blue and red dots represent the values at the junction. On the left, function $\rho$ and on the right, function $q$  for different times $t=0.25, \, 0.5$ and $0.75$  with final time $T=1$.
}
\label{fig:test8}
\end{figure}
\item[Test C3] Case of a constant initial datum supersonic to the right of the junction (right-arc) and subsonic on the left,
\begin{equation*}\left\{\begin{array}{lll}
\rho_\ell(0,x)=2.5,& q_\ell(0,x)=0.5, & \mbox{ for all } x\in[0,L_\ell],
\\
\rho_r(0,x)=2,& q_r(0,x)=0.5,  & \mbox{ for all }x\in[0,L_r].
\end{array}\right.
\end{equation*}
 As can be seen in Figure \ref{fig:test3}, on the right arc the solution at the junction remains supersonic and the two algorithms give the same results. This is an example  where the datum on the right is supersonic but with positive velocity, namely $q_r(0,L_r)>0$ and $q_r(0,L_r)>q_s^+(\rho_r(0,L_r))$. In this case, the Riemann based HLL-$\Jrs$ scheme selects the solution on the right invariant curve, see Subsec.\ref{sec:RSsupersonic}, and so does the HLL-$\Jmr$ scheme .
\begin{figure}[htbp!]
\centering
\begin{subfigure}[b]{0.45\textwidth}
\includegraphics[scale=0.5]{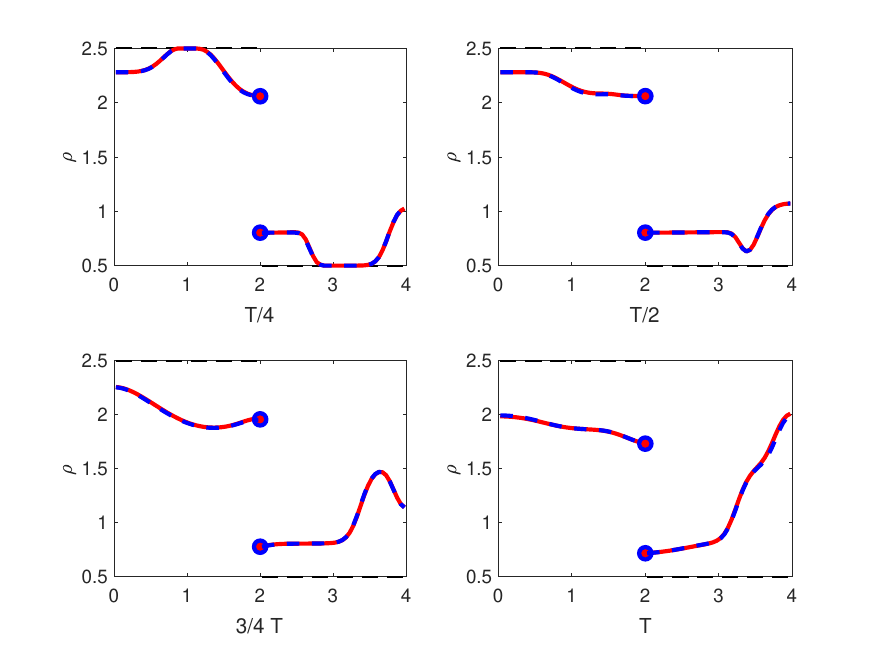}
\caption{Function $\rho$}
\end{subfigure}
\begin{subfigure}[b]{0.45\textwidth}
\includegraphics[scale=0.5]{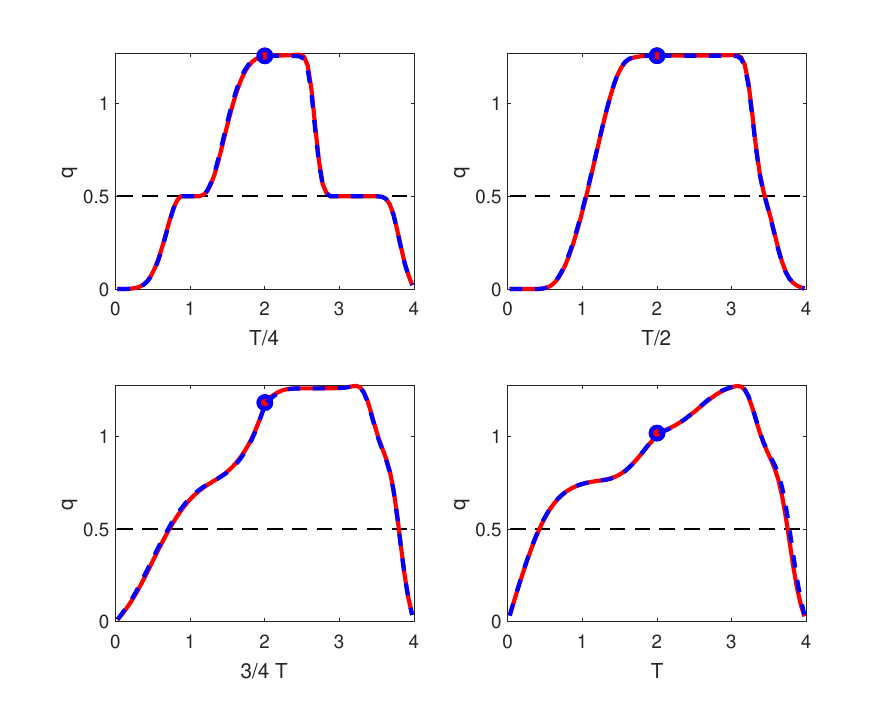}
\caption{Function $q$}
\end{subfigure}
\caption{\textbf{Test C3}. 
Comparison of the solutions given by scheme HLL-$\Jmr$ (in  solid red line) and by scheme HLL-$\Jrs$ (in dashed blue line), starting from a constant initial datum supersonic on the right-arc and subsonic on the left-arc. The blue and red dots represent the values at the junction and the dashed black line the initial value for $q$. On the left, function $\rho$ and on the right, function $q$  for different times $t=0.25, \, 0.5$ and $0.75$  with final time $T=1$.}
\label{fig:test3}
\end{figure}

\item[Test C4] Case of a non constant initial datum given by
\begin{equation*}\left\{\begin{array}{lcll}
\rho_\ell(0,x) = 0.1 + g(x,0.2,0.8) &\mbox{ and }& q_\ell(0,x)=-0.2, & \mbox{ for all } x\in[0,L_\ell],
\\
\rho_r(0,x) = 0.3 + g(x,0.1,0.5) &\mbox{ and }& q_r(0,x)=-0.2, & \mbox{ for all } x\in[0,L_r],
\end{array}\right.
\end{equation*}
which is supersonic to the left of the junction (left-arc) and subsonic to the right of the junction. As can be seen in  Figure \ref{fig:test4},  the two methods produce different results, particularly on the left arc. This is even more visible in Figure \ref{fig:test4J}, where we plot the evolution in time of the four junction values  $\rho^*_{\ell}$, $\rho^*_{r}$, $q^*_{\ell}$  and $q^*_{r}$. We observe, on the left arc the flow transition from the supersonic region to the subsonic one, and that the two solvers, HLL-$\Jmr$ and HLL-$\Jrs$, produce some slightly different results, especially before the transition. 
This is an example  where the datum on the left is supersonic with negative velocity, namely $q_\ell(0,L_\ell)<0$ and $q_\ell(0,L_\ell)<q_s^-(\rho_\ell(0,L_\ell))$. In this case the Riemann based algorithm selects the solution on the right invariant curve, see Subsection \ref{sec:RSsupersonic}, while the relaxation one $\Jmr$ gives a result which is not quite the same.
\begin{figure}[htbp!]
\centering
\begin{subfigure}[b]{0.45\textwidth}
\includegraphics[scale=0.5]{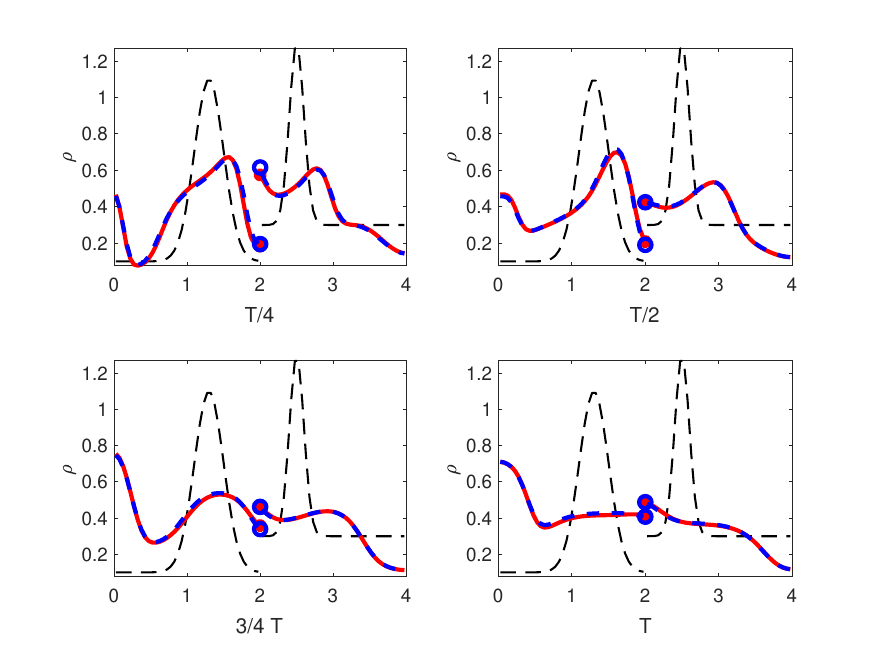}
\caption{Function $\rho$}
\end{subfigure}
\begin{subfigure}[b]{0.45\textwidth}
\includegraphics[scale=0.5]{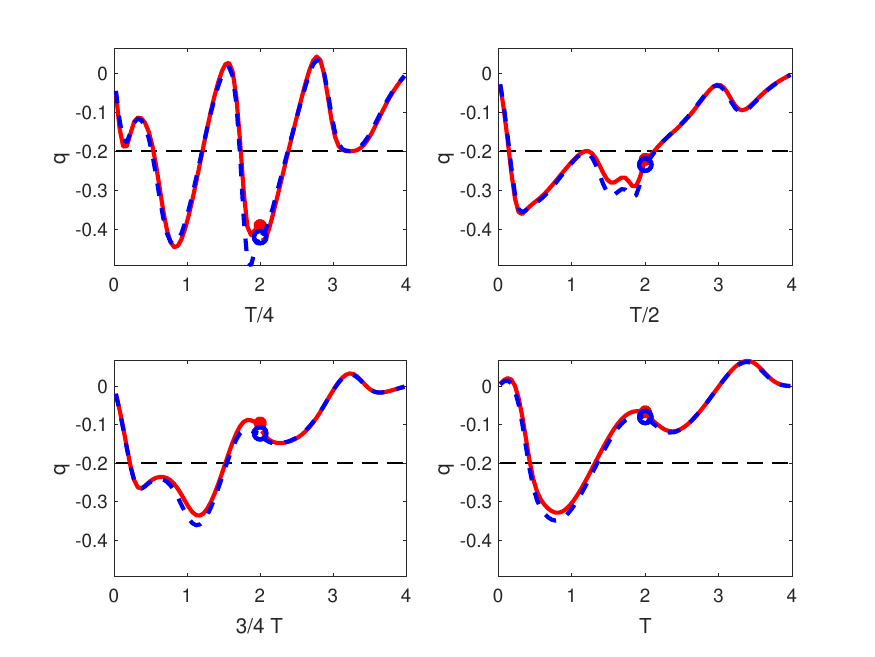}
\caption{Function $q$}
\end{subfigure}
\caption{\textbf{Test C4}. Comparison of the solutions given by scheme HLL-$\Jmr$ (in  solid red line) and by scheme HLL-$\Jrs$ (in dashed blue line), starting from a non-constant initial datum supersonic on the left-arc and subsonic on the left-arc (in dashed black line). The blue and red dots represent the values at the junction. On the left, function $\rho$ and on the right, function $q$  for different times $t=0.25, \, 0.5$ and $0.75$  with final time $T=1$.}
\label{fig:test4}
\end{figure}
\begin{figure}[htbp!]
\centering
\includegraphics[scale=0.5]{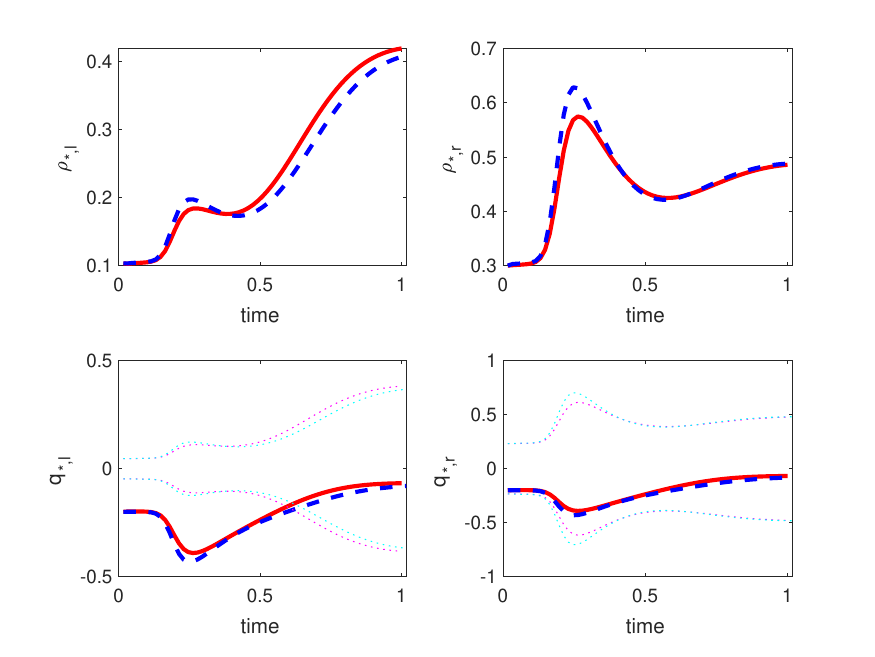}
\caption{\textbf{Test C4}. Evolution with respect to time of the 4  junction values:   $\rho_{*,\ell}$ on the top left subfigure, $\rho_{*,r}$ on the top right, $q_{*,\ell}$ on the bottom left and $q_{*,r}$ on the bottom right, computed by scheme HLL-$\Jmr$ (in  solid red line) and by scheme HLL-$\Jrs$ (in dashed blue line). The two dotted  lines in the  figures on the bottom delineate the boundary between the subsonic and supersonic regions (the sonic curve given for the corresponding value of $\rho$), in pink for the HLL-$\Jmr$ scheme and in light blue for the HLL-$\Jrs$  scheme. The regime is subsonic inside these two curves and supersonic  outside.}
\label{fig:test4J}
\end{figure}
\end{description}

\subsection{Study of different values for parameter $\kappa$}
 In this test case, we show the numerical dynamics obtained by scheme HLL-$\Jmr$ for two different values of parameter $\kappa$.
We assume a constant initial datum given by
\begin{equation*}\left\{\begin{array}{ll}
\rho_\ell(0,x)=2.5, & \mbox{ for all } x\in[0,L_\ell],
\\
\rho_r(0,x)=0.5, & \mbox{ for all }x\in[0,L_r],
\end{array}\right.
\end{equation*}
and 
\begin{equation*}
q_\ell(0,x)=q_r(0,x)=\kappa(\rho_\ell(0,x)-\rho_r(0,x))=2\kappa.
\end{equation*}
In Figure \ref{fig:test10_11}, we show the density evolution obtained with the two values $\kappa=100$ and $\kappa=0.1$. 

On the one hand, as expected, for $\kappa=100$ the density values at the junction start out different and then tend to converge until they coincide. This is coherent since for large values for $\kappa$, we expect the system to behave similarly  to the system with   the continuity of densities at the junction, that is to say the junction condition that is usually considered in other articles. On the other hand, for small value of $\kappa=0.1$, the density remains discontinuous at the interface. 

Note that the Riemann-based HLL-$\Jrs$ scheme does not provide a solution to test case with $\kappa=100$, whereas scheme HLL-$\Jmr$ does; it would be necessary to expand the region of admissible solutions and add further conditions to uniquely define the unknown values at the junction.
\begin{figure}[htbp!]
\centering
\includegraphics[scale=1]{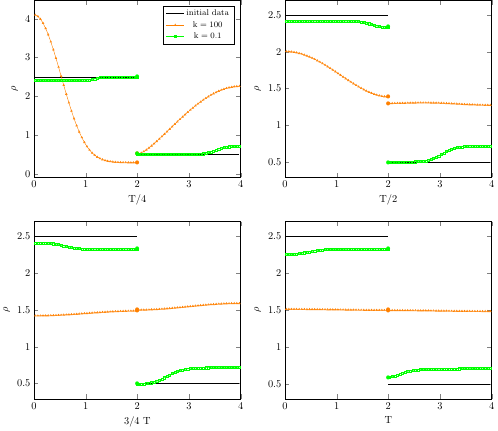}
\caption{\textbf{Different values for parameter $\kappa$}. Density solution obtained with the approximation HLL-$\Jmr$, starting from a constant initial datum on both arcs (in solid black line), for two different values of parameter $\kappa$ : $\kappa=100$ (dotted orange line) and $\kappa=0.1$ (dotted green line),  for different times $t=0.5, \, 1$ and $1.5$  with final time  $T=2$.}
\label{fig:test10_11}
\end{figure}

\subsection{Entropy dissipation at the discrete level} 
In this subsection, following the discussion in Subsec. \ref{sec:entropy}, we investigate the evolution of the total entropy in the network,
\begin{equation}\label{eq:numtotEntr}
S(t)=\int_0^{L_\ell+L_r} \eta(\rho(t,x),q(t,x)) dx \approx S^n= \Dx\sum_{j=0}^{J_\ell+J_r} \eta^n_j,
\end{equation}
where $\eta$ is the entropy function defined in \eqref{eq:entropy_pair} and $\eta^n_j=\eta(\rho^n_j,q^n_j)$ for $j$ running along the nodes of the two arcs forming the network.
We assume two different constant initial data 
which verify $\KK$~:
\begin{equation*}
q_\ell(0,x)=q_r(0,x)=\kappa(\rho_\ell(0,x)-\rho_r(0,x)),
\end{equation*}
with $\kappa=1$, and which differ in the amplitude of the density jump at the junction:
\begin{itemize}
\item[(E1)] $\rho_\ell(0,x)=1.5$ and $\rho_r(0,x)=1$ such that $q_\ell(0,x)=q_r(0,x)=\rho_\ell(0,x)-\rho_r(0,x)=0.5$.
\item[(E2)] $\rho_\ell(0,x)=6$ and $\rho_r(0,x)=1$ such that $q_\ell(0,x)=q_r(0,x)=\rho_\ell(0,x)-\rho_r(0,x)=5$.
\end{itemize}
We recall here  the entropy dissipation inequality \eqref{eq:entropy_inequ_estimate}~: $\displaystyle S(t) \leq S(0)-\int_{0}^t \Delta G(s) ds$.

As can be seen in Figure \ref{fig:test14}-(right) and Figure \ref{fig:test15}-(right), for both initial data the total entropy \eqref{eq:numtotEntr} is decreasing in time and numerically verify  a discrete version of the inequality \eqref{eq:entropy_inequ_estimate}, i.e.
\begin{equation}\label{eq:ineqEntr} 
S^n=\Dx\sum_{j=0}^{J_\ell+J_r} \eta^n_j \leq \Dx\sum_{j=0}^{J_\ell+J_r} \eta^0_j - \Dt\sum_{k=0}^n \left(G^{*,k}_{\ell}-G^{*,k}_{r}\right)=S^0- \Dt\sum_{k=0}^n \Delta G^{*,k},
\end{equation}
where $G^{*,n}_{\ell} = G(\rho^{*,n}_{\ell}, q^{*,n}_{\ell})$ and $G^{*,n}_{r} = G(\rho^{*,n}_{r}, q^{*,n}_{r})$ represent the values of the entropy flux, as defined in \eqref{eq:entropy_pair}, computed at the left and right junction states obtained from the solution of system $\Jmr$ at time step $n$. As in  Subsec. \ref{sec:entropy}, we also define the  quantity $\Delta G^{*,n}:= G^{*,n}_{\ell}-G^{*,n}_{r}$, which is a discrete version at time $t_{n}$ of quantity $\Delta G$.

Specifically, for the small jump assumed in test case (E1), the  quantity  $\Delta G$,  displayed in Figure \ref{fig:test14}-(a), continuous line, is  positive for all time $t>0$ as expected from the discussion in  Subsec. \ref{sec:entropy}, thus ensuring the dissipation of entropy with respect to time. We notice  in Figure \ref{fig:test14}-(b) that  the estimate  $\displaystyle  S(0)-\int_{0}^t \Delta G(s) ds$ is above the entropy for all times, as expected, and is equal up to a constant to the integral  displayed on Figure \ref{fig:test14}-(a). We also observe that in that case the estimate is sharp.
 
However,  for big density jumps at the junction, like in test case (E2), the variation $\Delta G(t)$ is  negative for all times, as displayed  in Figure \ref{fig:test15}-(a), continuous line. But, even if the right hand part of the inequality \eqref{eq:ineqEntr} is increasing, and the  bound from above $\displaystyle  S(0)-\int_{0}^t \Delta G(s) ds$  is therefore not accurate, the total entropy remains decreasing in time, see Figure \ref{fig:test15}-(b). We can interpret this as follows~: the dissipation of entropy along the arcs is large enough to compensate  the small enough non-dissipation of entropy at the junction.

\begin{figure}[htbp!]
\centering
\begin{subfigure}[b]{0.45\textwidth}
\includegraphics[scale=1.3]{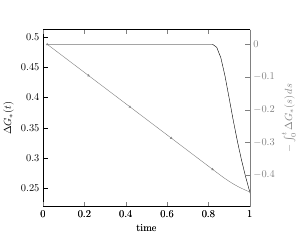}
\caption{}
\end{subfigure}
\begin{subfigure}[b]{0.45\textwidth}
\includegraphics[scale=1.3]{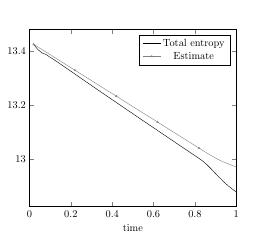}
\caption{}
\end{subfigure}
\caption{\textbf{Entropy dissipation}, test case (E1), small jump at the junction. (a) Plot of  $\Delta G$ with respect to time (solide black line and range of values on the left) and of $-\int_0^t \Delta G(s)\,ds$ (dotted grey line and range of values on the right). (b) Plot of the total entropy  $S$ (solid black line) as a function of time and of the right hand side $\displaystyle  S(0)-\int_{0}^t \Delta G(s) ds$ of inequality \eqref{eq:ineqEntr} (dotted grey line).}
\label{fig:test14}
\end{figure}
\begin{figure}[htbp!]
\centering
\begin{subfigure}[b]{0.45\textwidth}
\includegraphics[scale=1.3]{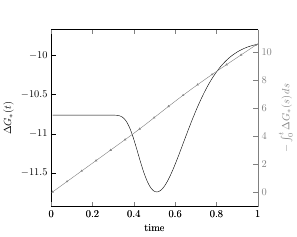}
\caption{}
\end{subfigure}
\begin{subfigure}[b]{0.45\textwidth}
\includegraphics[scale=1.3]{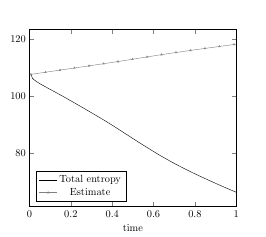}
\caption{}
\end{subfigure}
\caption{\textbf{Entropy dissipation}, test case (E2), large jump at the junction.  (a) Plot of  $\Delta G$ with respect to time (solide black line and range of values on the left) and of $-\int_0^t \Delta G(s)\,ds$ (dotted grey line and range of values on the right). (b) Plot of the total entropy  $S$ (solid black line) as a function of time and of the right hand side $\displaystyle  S(0)-\int_{0}^t \Delta G(s) ds$ of inequality \eqref{eq:ineqEntr} (dotted grey line).}
\label{fig:test15}
\end{figure}

\subsection{Simulations in the case $\gamma=4/3$}

To conclude this part of the numerical tests, let us consider the problem with a different value of $\gamma$ in the pressure law \eqref{eq:pfunct}. 
We take $\gamma=4/3$ and the constant initial datum (same as (E2)) given by
\begin{equation*}\left\{\begin{array}{lll}
\rho_\ell(0,x)=6,& q_\ell(0,x)=5, & \mbox{ for all } x\in[0,L_\ell],
\\
\rho_r(0,x)=1,& q_r(0,x)=5,  & \mbox{ for all }x\in[0,L_r].
\end{array}\right.
\end{equation*}
Note that here the simulations are performed using only the scheme HLL-$\Jmr$, since the scheme HLL-$\Jrs$ is not able to handle this case (there is no simple solution to the corresponding Riemann problem). 

 The results are shown in Figure \ref{fig:test12}. We notice first that the evolution is much slower than in the case $\gamma=2$ and that function $q$  is decreasing towards $0$ but very slowly. We also observe that before going asymptotically towards a constant state, the density $\rho$ tends to form a bump like solution on the right arc of the network, see  the function $\rho$ at different times on the left subfigure.

\begin{figure}[htbp!]
\centering
\begin{subfigure}[b]{0.45\textwidth}
\includegraphics[scale=0.5]{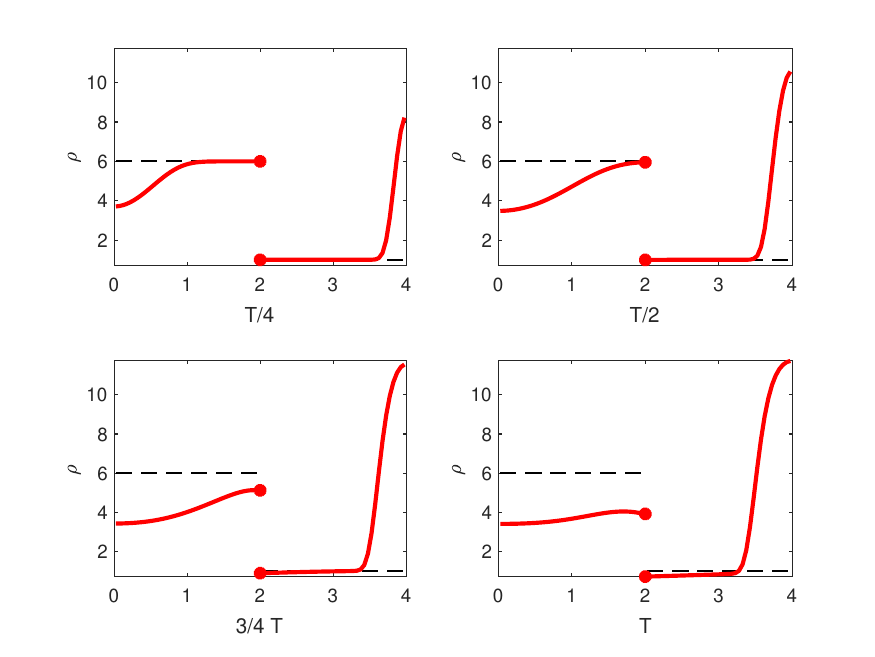}
\caption{Function $\rho$}
\end{subfigure}
\begin{subfigure}[b]{0.45\textwidth}
\includegraphics[scale=0.46]{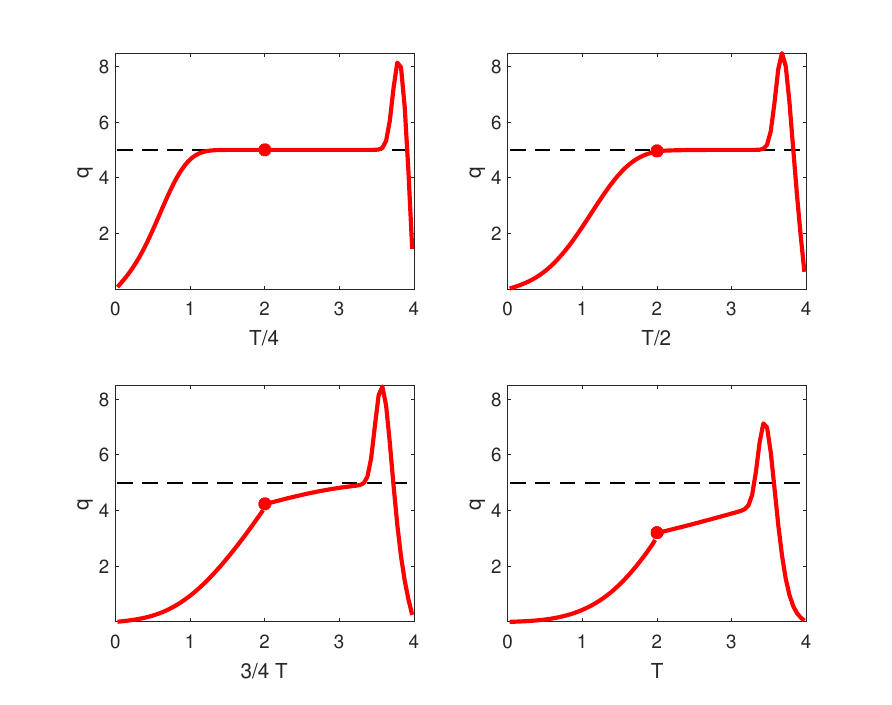}
\caption{Function $q$}
\end{subfigure}
\caption{\textbf{Simulation for $\gamma=4/3$}.   Solutions given by scheme HLL-$\Jmr$ (in  solid red line), starting from a constant  initial datum (in dashed black line) on both arcs. The red dots represent the values at the junction. On the left, function $\rho$ and on the right, function $q$  for different times $t=0.25, \, 0.5$ and $0.75$  with final time $T=1$.}
\label{fig:test12}
\end{figure}


\section{Extensions to a general network 
}\label{Extensions}

To begin with, we consider an extension of what has been previously done to more complicated network, i.e. networks with any number of arcs where nodes can be connected to any number of arcs.


Let us define a network or a connected  graph $G=(\mathcal{N},\mathcal{A})$, as  composed of two finite sets,  a set of 
 nodes (or  vertices) $\mathcal{N}$  and  a set of $N$ 
 arcs  $\mathcal{A}$, such that an arc connects a pair of nodes. Since arcs are bidirectional the graph is non-oriented, but we need to fix an artificial orientation in order to fix a sign to the velocities. 
 The network is therefore  composed of "oriented" arcs and there are  two different types of intervals at a node  $p \in \mathcal{N}$ : incoming ones -- the set of these intervals is denoted by $\mathcal{I}_{p}$ -- and outgoing ones  -- whose set  is denoted by $\mathcal{O}_{p}$. See,  for example,  Figure \ref{network_4arcs} for the representation of a $4$-arc and $1$node -network, with $2$ incoming arcs and $2$ outgoing arcs, where $1,2 \in \mathcal{I}_{1}$ and $3,4 \in \mathcal{O}_{1}$.

\subsection{Extension of junction conditions \eqref{system_junction} }

Let us consider now a node $p \in \mathcal{N}$, which is the junction of $N_p$ arcs, denoted by $i$, with $1 \leq i \leq N_p$.
We denote by $\mathcal{I}_{p}$ the set of incoming arcs and by $\mathcal{O}_{p}$ the set of outgoing arcs.

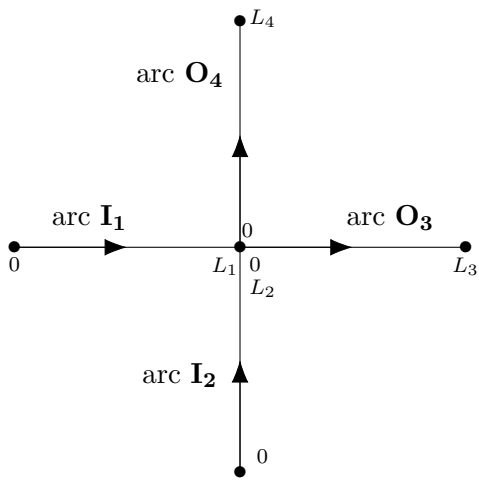
\begin{figure}[htbp!]
\begin{center}
\begin{tikzpicture}
\draw  (-3,0) -- (3,0) ;
\draw  (0,-3) -- (0,3) ;
\draw[-{Latex[length=3mm]}] (-3,0) -- (-1.5,0) ;
\draw[-{Latex[length=3mm]}] (0,0) -- (1.5,0) ;
\draw[-{Latex[length=3mm]}] (0,-3) -- (0,-1.5) ;
\draw[-{Latex[length=3mm]}] (0,0) -- (0,1.5) ;
\draw (0,0) node {$\bullet$} ;
\draw (-3,0) node {$\bullet$} ;
\draw (3,0) node {$\bullet$} ;
\draw (0,-3) node {$\bullet$} ;
\draw (0,3) node {$\bullet$} ;
\draw (-2,0.1) node[above]{arc $\mathbf{I_{1}}$} ;
\draw (2,0.1) node[above]{arc $\mathbf{O_{3}}$} ;
\draw (-0.8,-2) node[above]{arc $\mathbf{I_{2}}$} ;
\draw (-0.8,2) node[above]{arc $\mathbf{O_{4}}$} ;
\draw (-3,0) node[below]{\scriptsize{$0$}} ;
\draw (-.2,0) node[below]{\scriptsize{$L_{1}$}} ;
\draw (.2,0) node[below]{\scriptsize{$0$}} ;
\draw (3,0) node[below]{\scriptsize{$L_{3}$}} ;
\draw (0.1,0) node[above]{\scriptsize{$0$}} ;
\draw (0.3,-0.3) node[below]{\scriptsize{$L_{2}$}} ;
\draw (0.3,-3) node[above]{\scriptsize{$0$}} ;
\draw (0.3,2.8) node[above]{\scriptsize{$L_{4}$}} ;
\end{tikzpicture}
\end{center}
\caption{An example of a 4 arc network with $2$ incoming arcs and $2$ outgoing arcs} 
\label{network_4arcs}
\end{figure}

We will first generalize at the continuous level conditions \eqref{bc} for the outer nodes and then conditions \eqref{jc}   for junction nodes. Then, we give a numerical discretization of these conditions, namely a generalization of discrete conditions \eqref{bord-gauche2}- \eqref{bord-droit2} on outer nodes and conditions \eqref{system_junction} at  junction nodes. 
Note that  the "discrete equality junction conditions" \eqref{system_junction} are straightforward to generalize unlike Riemann invariant junction conditions \eqref{eq:1in1_Rpb}, which would be much  more involved to consider for general networks. 

Therefore, the generalization of conditions \eqref{bc} at the outer nodes reads as~:
 \begin{equation}\label{bcN}
 \left\{\begin{array}{ll}
\partial_{x} \rho_{i} (\cdot, L_{i})=0 \text{ and } q_{i}(\cdot, L_{i}) =0,
& \text{ if } i \in \mathcal{O}_{p},\\
\partial_{x} \rho_{i} (\cdot, 0)=0 \text{ and } q_{i} (\cdot, 0) =0.
 & \text{ if } i \in \mathcal{I}_{p},
 \end{array}\right.
 \end{equation}

We now consider the following  generalization of conditions \eqref{jc} at the junction node $p \in \mathcal{N}$, which says that the flux is equal to a linear combination of the different differences of densities on the various arcs~:
 \begin{equation}\label{jcN}
\left\{\begin{array}{ll}
  -q_{i} (\cdot,0)=\sumd_{j\in \mathcal{I}_{p}}  \kappa_{i,j} (\rho_{i} (\cdot,0)- \rho_{j} (\cdot,L_{j})) +\sumd_{j\in \mathcal{O}_{p}}  \kappa_{i,j} (\rho_{i} (\cdot,0)- \rho_{j}(\cdot,0))& \text{ if } i \in \mathcal{O}_{p},  \\
    q_{i} (\cdot,L_{i}) =\sumd_{j\in \mathcal{I}_{p}}  \kappa_{i,j} (\rho_{i} (\cdot,L_{i})- \rho_{j} (\cdot,L_{j})) +\sumd_{j\in \mathcal{O}_{p}}  \kappa_{i,j} (\rho_{i} (\cdot,L_{i})- \rho_{j}(\cdot,0))  & \text{ if } i \in \mathcal{I}_{p},
\end{array}\right.
\end{equation}
where $\kappa_{i,j} \geq 0$ are generalizations of permeability coefficients between two arcs. 

In order to have the mass conservation property at the junction $p \in \mathcal{N}$, we also impose a generalization of condition \eqref{jcegal}, which reads as~:
 \begin{equation}\label{consflux}
  \sum_{i \in \mathcal{O}_{p}} -q_{i} (\cdot,0)+  \sum_{i \in \mathcal{I}_{p}}  q_{i} (\cdot,L_{i}) =0.
  \end{equation}
If we impose that the transmission coefficients  are symmetric, namely $$\kappa_{i,j}=\kappa_{j,i},$$
condition \eqref{consflux} is automatically satisfied. 
This condition ensures that the total mass of the whole system is conserved, namely
 \begin{equation}\label{consmass}
 \frac{d}{dt}m(t)=0, \text{ with } m(t)=\sum_{i \in  \mathcal{A}} \int_{0}^{L_{i}} \rho_{i}(t, x) \, dx.
  \end{equation}

Now, let us consider the discretization of the previous conditions on the outer nodes and at the junction, following the ideas of Sec.\ref{DEJunctionCondition}.

For the outer nodes, we set, in the spirit of conditions \eqref{bord-gauche2}- \eqref{bord-droit2}~:
 \begin{equation}\label{bcNdiscr}
 \left\{\begin{array}{ll}
 \rho_{0, i}=\rho_{1,i} \text{ and } q_{0, i}=-q_{1, i}& \text{ if } i \in \mathcal{O}_{p},\\
 \rho_{J_{i},i}=\rho_{J_{i}-1, i} \text{ and } q_{J_{i}, i}=-q_{J_{i}-1, i}& \text{ if } i \in \mathcal{I}_{p}.
 \end{array}\right.
 \end{equation}
Now, at each node $p \in \mathcal{N}$, we will need to solve a linear system. Namely, at node $p \in \mathcal{N}$, considering  the junction conditions with $N_p$ arcs, we need to compute $2N_p$ values, that we denote $\rho^*_{i}$ and $q^*_{i}$ for $1 \leq i \leq N_p$. Equations \eqref{jcN} give at the discrete level the following $N_p$ equations~:
 \begin{equation}\label{jcNdiscr}
\left\{\begin{array}{ll}
  -q_{i}^* =\ds \sum_{j\in \mathcal{I}_{p}}  \kappa_{i,j} (\rho_{i}^*  - \rho_{j}^*  ) +\sum_{j\in \mathcal{O}_{p}}  \kappa_{i,j} (\rho_{i}^* - \rho_{j}^* )& \text{ if } i \in \mathcal{O}_{p},  \\
    q_{i}^*  =\ds\sum_{j\in \mathcal{I}_{p}}  \kappa_{i,j} (\rho_{i}^*  - \rho_{j}^* ) +\sum_{j\in \mathcal{O}_{p}}  \kappa_{i,j} (\rho_{i}^*  - \rho_{j}^* )  & \text{ if } i \in \mathcal{I}_{p}. 
\end{array}\right.
\end{equation}
We obtain $N_{p}$ other equations by imposing a generalization of equation \eqref{fluxJunc}~:
 \begin{equation}\label{fluxdiscr}
 \left\{\begin{array}{ll}
 \mathcal{F}^{\rho}(U_{1,i},U^*_{i})=q^*_{i}, & \text{ if } i \in \mathcal{O}_{p},  \\
 \mathcal{F}^{\rho}(U_{J_{i},i},U^*_{i})=q^*_{i} & \text{ if } i \in \mathcal{I}_{p}. 
 \end{array}\right.
\end{equation}
In the case of the HLL  flux, the system \eqref{fluxdiscr} becomes
 \begin{equation*}
 \left\{\begin{array}{ll}
\ds \frac{q^*_{i}+q_{1,i}}{2}+\frac{\lambda_{i}}{2}(\rho^*_{i}-\rho_{1,i})=q^*_{i}, & \text{ if } i \in \mathcal{O}_{p},  \\
\ds  \frac{q^*_{i}+q_{J_{i},i}}{2}-\frac{\lambda_{i}}{2}(\rho ^*_{i}-\rho_{J_{i},i})=q^*_{i} & \text{ if } i \in \mathcal{I}_{p}. 
 \end{array}\right.
\end{equation*}
or equivalently
 \begin{equation}\label{fluxdiscr3}
 \left\{\begin{array}{ll}
q^*_{i}=q_{1,i}+\lambda_{i}(\rho^*_{i}-\rho_{1,i}), & \text{ if } i \in \mathcal{O}_{p},  \\
q^*_{i}=q_{J_{i},i}-\mu_{i}(\rho ^*_{i}-\rho_{J_{i},i}), & \text{ if } i \in \mathcal{I}_{p}. 
 \end{array}\right.
\end{equation}

Combining Eq.\eqref{fluxdiscr3} and \eqref{jcNdiscr}, we obtain the following system composed of $N_{p}$ equations~:
 \begin{equation}\label{linear_system}
 \left\{\begin{array}{ll}
 \ds \sum_{j\in \mathcal{I}_{p}}  \kappa_{i,j} (\rho_{i}^*  - \rho_{j}^*  ) +\sum_{j\in \mathcal{O}_{p}}  \kappa_{i,j} (\rho_{i}^* - \rho_{j}^* )+\mu_{i}  \rho^*_{i} = -q_{1,i}+\mu_{i} \rho_{1,i}& \text{ if } i \in \mathcal{O}_{p},  \\
 \ds\sum_{j\in \mathcal{I}_{p}}  \kappa_{i,j} (\rho_{i}^*  - \rho_{j}^* ) +\sum_{j\in \mathcal{O}_{p}}  \kappa_{i,j} (\rho_{i}^*  - \rho_{j}^* ) +\mu_{i}  \rho^*_{i}=q_{J_{i},i}+\mu_{i}\rho_{J_{i},i} & \text{ if } i \in \mathcal{I}_{p}. 
\end{array}\right.
\end{equation}

Therefore, at node $p \in \mathcal{N}$,  the vector $\ds (\rho^*_{1}, \cdots, \rho^*_{N_{p}})$ is solution to a linear system, which is invertible, as soon as $\kappa_{i,j} \geq0$ and $\mu_{i}>0$, since 
the matrix 
\begin{equation}\label{matrix}
\left(
\begin{array}{cccc}
\ds\sum_{j \neq 1} \kappa_{1,j}+\mu_{1} &  &   &\\ 
 &  \ddots&   &- \kappa_{i,j} \\
 &  &   \ddots & \\
 - \kappa_{i,j} &  &   &  \ds \sum_{j \neq N_{p}} \kappa_{N_{p},j}+\mu_{N_{p}}
\end{array}
\right),
\end{equation}
 is a strictly diagonally dominant matrix. 
Values for momenta $q^*_{i}, \, 1 \leq i \leq N_{p}$ are then given by Eq.\eqref{fluxdiscr3}.

\subsection{Properties of the numerical scheme}

It is now easy to generalize the mass conservation and the positivity of solution, already demonstrated at Sec.\ref{properties} in the case of  a $2$ arc network, to the case of a $N$ arc network. However, the entropy dissipation property is far more complicated and will not be treated here. 
 First,  we prove the following property, which is equivalent to the continuous mass preservation \eqref{consmass}.

\begin{prop}
Let us  consider system \eqref{GasDynamicLeft} set on a $N$-arc network with boundary conditions \eqref{bcN} on the outer nodes and junction conditions \eqref{jcN} with symmetric coefficients, that is to say $\kappa_{i,j}=\kappa_{j,i}$.  The numerical scheme \eqref{schemenum} with  discrete boundary conditions \eqref{bcNdiscr} on the outer nodes and discrete junction conditions \eqref{jcNdiscr} is mass-preserving, that is to say 
the discrete mass $\ds m^n_{tot}={h}\sum_{i=1}^N  \sum_{j=1}^{J_{i}} \rho_{j,i}^n $  is independent of $n \in \N$.
\end{prop}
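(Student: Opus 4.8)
The plan is to reproduce, for the whole network, the telescoping argument that gave mass conservation on the $2$-arc network, the only new ingredient being careful bookkeeping of the orientation of each arc at each node. First I would take the first (density) component of the conservative scheme \eqref{schemenum} on an arbitrary arc $i$, namely $\rho_{j,i}^{n+1} = \rho_{j,i}^n - \frac{\Dt}{\Dx}\left(\mathcal{F}^\rho_{j+1/2,i} - \mathcal{F}^\rho_{j-1/2,i}\right)$, multiply by $h=\Dx$ and sum over $j=1,\dots,J_i$. The interior fluxes telescope and leave only the two endpoint fluxes of arc $i$. Summing over all arcs $i=1,\dots,N$ then yields
\[
m^{n+1}_{tot}-m^n_{tot} = -\Dt\sum_{i=1}^{N}\left(\mathcal{F}^\rho_{J_i+1/2,i}-\mathcal{F}^\rho_{1/2,i}\right),
\]
so the statement reduces to showing that this right-hand side vanishes for every $n$.

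Next I would split the endpoint fluxes into those located at outer nodes and those located at junction nodes. At an outer node the discrete boundary conditions \eqref{bcNdiscr} impose $\rho$-continuity and $q$-antisymmetry across the adjacent ghost cell; inserting this into the HLL density flux \eqref{flux2}, whose $\rho$-component is $\mathcal{F}^\rho(U^-,U^+)=\tfrac12(q^-+q^+)-\tfrac{\lambda_i}{2}(\rho^+-\rho^-)$, makes both the $\tfrac12(q^-+q^+)$ term and the $\tfrac{\lambda_i}{2}(\rho^+-\rho^-)$ term vanish, exactly as in the $2$-arc case. Hence outer nodes contribute nothing.

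The real point is the junction contribution. At a fixed node $p$ I would collect the endpoint fluxes of all adjacent arcs. By the construction \eqref{fluxdiscr} of the junction values, the junction flux on each adjacent arc equals its intermediate momentum $q^*_i$; moreover an outgoing arc $i\in\mathcal{O}_p$ touches $p$ at $x=0$ (its $j=1/2$ face, entering the sum with a $+$ sign) while an incoming arc $i\in\mathcal{I}_p$ touches $p$ at $x=L_i$ (its $j=J_i+1/2$ face, entering with a $-$ sign). The total node contribution is therefore $\Dt\left(\sum_{i\in\mathcal{O}_p}q^*_i-\sum_{i\in\mathcal{I}_p}q^*_i\right)$. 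Substituting the discrete jump conditions \eqref{jcNdiscr}, which give $q^*_i=-\sum_j\kappa_{i,j}(\rho^*_i-\rho^*_j)$ for $i\in\mathcal{O}_p$ and $q^*_i=+\sum_j\kappa_{i,j}(\rho^*_i-\rho^*_j)$ for $i\in\mathcal{I}_p$, collapses this quantity into $-\Dt\sum_{i,j}\kappa_{i,j}(\rho^*_i-\rho^*_j)$, where $i,j$ both run over all arcs meeting at $p$. This double sum is antisymmetric under $i\leftrightarrow j$ as soon as $\kappa_{i,j}=\kappa_{j,i}$, so it equals its own negative and thus vanishes; this is precisely the discrete counterpart of the Kirchhoff-type balance \eqref{consflux}.

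Summing the (zero) contributions over all outer and junction nodes gives $m^{n+1}_{tot}=m^n_{tot}$, which is the claim. The only genuinely delicate step is the third paragraph: matching each arc's orientation to the correct ghost-cell face and the correct sign of its junction flux, and recognizing that the symmetry hypothesis $\kappa_{i,j}=\kappa_{j,i}$ is exactly what makes the double sum telescope to zero. Everything else is the routine conservative telescoping already used in the $2$-arc proof.
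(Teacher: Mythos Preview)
Your argument is correct and follows the same route as the paper's proof: take the density component of \eqref{schemenum}, telescope on each arc, kill the outer-node fluxes via \eqref{bcNdiscr}, and at each junction use \eqref{fluxdiscr} together with \eqref{jcNdiscr} and the symmetry $\kappa_{i,j}=\kappa_{j,i}$ to obtain $\sum_{i\in\mathcal{O}_p}(-q_i^*)+\sum_{i\in\mathcal{I}_p}q_i^*=0$. The paper compresses all of this into a two-line sketch, whereas you spell out the telescoping, the orientation bookkeeping, and the antisymmetry of the double sum explicitly; no substantive difference in approach.
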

\begin{proof}
We consider the first component of equation \eqref{schemenum}. The conservation of mass is given by Eq.\eqref{bcNdiscr} and \eqref{jcNdiscr} with $\kappa_{i,j}=\kappa_{j,i}$, which leads to 
$$ \sum_{i \in \mathcal{O}_p} -q_{i}^*+  \sum_{i \in \mathcal{I}_p}   q_{i}^* =0.$$
\end{proof}

Now, we prove   the generalization of the positivity of the solution  in the particular case of HLL numerical flux~:
\begin{prop}
Let us consider scheme \eqref{schemenum} with flux \eqref{flux2} on a $N$ arc network, boundary conditions  \eqref{bcNdiscr} and conditions \eqref{jcNdiscr}-\eqref{fluxdiscr} at the junction in order to discretize system \eqref{GasDynamicLeft}-\eqref{bcN}-\eqref{jcN} on a general network.
We define $\lambda^n_i>0$ such that 
\[
\lambda^n_i \geq\maxd_{1 \leq j \leq J_{i}}{|u^n_{j, i}|+ \sqrt{p'(\rho^n_{j,i})}}, \text{ for } 1 \leq i \leq N.
\]
 If the initial condition is positive, that is to say 
 \[
  \rho^0_{j,i}\geq 0 \text{ for all } 1 \leq i \leq N, \text{ for all } 1  \leq j \leq J_{i}, \,  
  \]
   then the solution remains positive in time, that is to say 
    \[
  \rho^n_{j,i}\geq 0 \text{ for all } 1 \leq i \leq N, \text{ for all } 1  \leq j \leq J_{i}, \,   \text{ for all }  n \in \N.
  \]
 \end{prop}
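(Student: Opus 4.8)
The plan is to argue by induction on the time index $n$, following the proof of Proposition~\ref{positivity} and isolating the only genuinely new ingredient, the treatment of a node joining $N_p$ arcs. The base step is the hypothesis on the initial data, so assume $\rho^n_{j,i}\geq 0$ for all $1\leq i\leq N$ and all $1\leq j\leq J_i$, and set $\nu=\Dt/\Dx$. For a cell $j$ in the interior of an arc $i$, the first component of scheme \eqref{schemenum} with the HLL flux \eqref{flux2} can be rewritten as
\begin{equation*}
\rho^{n+1}_{j,i}=(1-\nu\lambda^n_i)\,\rho^n_{j,i}+\frac{\nu}{2}\big(\lambda^n_i\rho^n_{j+1,i}-q^n_{j+1,i}\big)+\frac{\nu}{2}\big(\lambda^n_i\rho^n_{j-1,i}+q^n_{j-1,i}\big).
\end{equation*}
Since $\lambda^n_i\rho^n_{j\pm1,i}\mp q^n_{j\pm1,i}=\rho^n_{j\pm1,i}(\lambda^n_i\mp u^n_{j\pm1,i})\geq 0$ by the subcharacteristic bound $\lambda^n_i\geq|u^n_{j,i}|$ of the statement, and $1-\nu\lambda^n_i\geq 0$ by the CFL condition $\Dt\,\lambda^n_i\leq\Dx$, this is a combination of neighbouring densities with non-negative coefficients, whence $\rho^{n+1}_{j,i}\geq 0$. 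For a cell next to an outer node, the boundary rule \eqref{bcNdiscr} sets the ghost density equal to the adjacent interior one and reverses the momentum, so substitution reduces the update to the same non-negative combination.

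It remains to treat the cells adjacent to a junction node $p$, and the key step is to show that the values $\rho^*_i$ solving the linear system \eqref{linear_system} are non-negative. Its right-hand side is non-negative: using $q=\rho u$, the inductive hypothesis and the bound on $\lambda^n_i$, one has $-q^n_{1,i}+\mu_i\rho^n_{1,i}=\rho^n_{1,i}(\mu_i-u^n_{1,i})\geq 0$ for $i\in\mathcal{O}_p$ and $q^n_{J_i,i}+\mu_i\rho^n_{J_i,i}=\rho^n_{J_i,i}(\mu_i+u^n_{J_i,i})\geq 0$ for $i\in\mathcal{I}_p$, where $\mu_i=\lambda^n_i$. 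The system matrix \eqref{matrix} is, as already noted, strictly diagonally dominant with positive diagonal and non-positive off-diagonal entries, hence a nonsingular M-matrix whose inverse is entrywise non-negative; consequently $(\rho^*_1,\dots,\rho^*_{N_p})$ is non-negative, exactly as the explicit formulas \eqref{jonction} give $\rho^*_\ell,\rho^*_r\geq 0$ for two arcs. The update of the cell of arc $i$ touching $p$ then uses, by \eqref{fluxdiscr}, the density flux $q^*_i$; combining with \eqref{fluxdiscr3} it takes once more a three-term non-negative-coefficient form, with the junction contribution $\nu\mu_i\cdot\frac12(\rho^*_i-q^*_i/\mu_i)$ replacing the missing neighbour, so the induction closes provided this middle coefficient is non-negative.

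The step I expect to be the main obstacle is precisely this last point at the junction. What the adjacent-cell update really requires is not merely $\rho^*_i\geq 0$, but that the density transported from the junction back into arc $i$, namely the first component $\frac12(\rho^*_i-q^*_i/\mu_i)$ of $M_1(U^*_i)$ for $i\in\mathcal{I}_p$ (resp. $\frac12(\rho^*_i+q^*_i/\mu_i)$ of $M_2(U^*_i)$ for $i\in\mathcal{O}_p$), be non-negative. In the two-arc case \eqref{jonction} exhibits this quantity as an explicit convex combination of the boundary Maxwellian components, and the same mechanism is at work in general: through the inverse of \eqref{matrix} the junction acts as a non-negative scattering of the incoming components onto the outgoing ones. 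Writing this scattering representation explicitly and verifying that its coefficients stay non-negative --- which couples the permeabilities $\kappa_{i,j}$ to the chosen speeds $\mu_i=\lambda^n_i$ --- is the delicate part, whereas the interior and boundary estimates are routine adaptations of Proposition~\ref{positivity}.
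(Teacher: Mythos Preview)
Your argument follows exactly the paper's route: induction on $n$, non-negativity of the right-hand side of \eqref{linear_system} via $\lambda^n_i\geq|u^n_{j,i}|$, and the observation that the matrix \eqref{matrix} is monotone (an M-matrix), yielding $\rho^*_i\geq 0$. The paper's proof stops there, asserting without further detail that positivity of the $\rho^*_i$ gives $\rho^{n+1}_{j,i}\geq 0$ in the junction-adjacent cells.

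Where you go beyond the paper is in your last paragraph, and the observation there is sharp: the update of the cell touching the junction is
\[
\rho^{n+1}_{J_i,i}=(1-\nu\lambda_i)\rho^n_{J_i,i}+\nu\lambda_i\,M_1^\rho(U^*_i)+\nu\lambda_i\,M_2^\rho(U^n_{J_i-1,i})
\]
for $i\in\mathcal I_p$ (and symmetrically for $i\in\mathcal O_p$), so what is really needed is $M_1^\rho(U^*_i)=\tfrac12(\rho^*_i-q^*_i/\lambda_i)\geq 0$, not merely $\rho^*_i\geq 0$. The paper does not address this point at all; you correctly isolate it as the only non-routine step. However, your proposed resolution is not quite right as stated. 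Even in the two-arc case, working out $M_1^\rho(U^*_\ell)$ from \eqref{jonction} gives a linear combination of the incoming Maxwellian components $M_2^\rho(U^-_\ell)$ and $M_1^\rho(U^+_r)$ whose first coefficient is proportional to $\kappa(\lambda_\ell-\lambda_r)+\lambda_\ell\lambda_r$; this is non-negative when $\lambda_\ell=\lambda_r$ (or when $\kappa$ is not too large relative to the speed gap), but can turn negative otherwise. So the ``explicit convex combination'' you invoke does not hold in full generality, and the general scattering argument you sketch would need an additional compatibility condition between the permeabilities $\kappa_{i,j}$ and the speeds $\lambda_i$ to go through. In short: your proof matches the paper's, your extra scrutiny exposes a step that both leave open, and closing it cleanly requires more than the monotone-matrix inequality alone.
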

\begin{proof}
We prove the proposition by induction, following the proof of Prop.\ref{positivity}.
Assume that $\ds \rho^n_{j,i} \geq 0$ for all $1 \leq i \leq N, \, 1 \leq j \leq J_{i}$. 

We consider node $p\in\mathcal{N}$.
Since $\lambda^n_{i} \geq |u_{j, i}^n|$ for all $1 \leq j \leq J_{i}$, we can show that the components of the second member of linear system \eqref{linear_system} are nonnegative. Moreover, since  $\kappa_{i,j}  \geq0$ and $\mu_{i}^n>0$,  it can be proved easily that the matrix \eqref{matrix} of linear system \eqref{linear_system}  is  a monotone matrix. Consequently the solution  $\ds (\rho^*_{1}, \cdots, \rho^*_{N_{p}})$ has positive components and therefore, we can deduce that 
  $\rho_{j, i}^{n+1} \geq 0$ for all $1 \leq i \leq N, \, 1 \leq j \leq J_{i}$.
  \end{proof}

\subsection{Numerical tests}

We  present now some numerical tests for two different general networks.

\begin{description}
\item[Network 1]
The first test is performed on a 12-arc  network with 12 nodes, see \cite{BNR14}~: 4 junction nodes and 8 outer nodes, see Figure\ref{Narc_network2}.  The  $4$ junction  nodes are forming a square and each of them is connected to 2 outer nodes.  Each junction node connects therefore 4 arcs together and the permeability coefficients  $\kappa_{i,j}$ are given in the following matrix, the same for each junction node, note that $\kappa_{i,j}=\kappa_{j,i}$~:
$$
K=\left( 
\begin{matrix}
0 &0.3 &0.2 &0.5 \\
            0.3 &0 &0.2&0.1\\
            0.2&0.2&0&0.2\\
            0.5&0.1&0.2&0
            \end{matrix}
   \right).         
$$
Here, all the arcs have the same length.

We start initially with density $\rho$ and momentum $q$ null on each arc except the first arc, where $\rho$ is a small perturbation of a constant state equal to $100$ and the velocity is constant equal to $10$. We let the system evolves until $T=50$. 

At time $T=10$, we see the propagation of the density along the network : the system is nearly at equilibrium but we notice some larger values for the density on the arcs which are connected to the first arc, especially the "neighboring" one which is also connected to an outer node. The value for $q$ has also strongly decreased in the first arc and the arcs where the modulus of the momentum is the highest are the ones linked to node $11$.

We notice that at time $T=50$, the system has reached an asymptotic state with some null momentum $q$ on each arc of the network and a constant density $\rho$ which is the same on each arc of the network; the constant density can be computed thanks to mass conservation and the initial mass on the whole network, see Figure\ref{Narc_test2}.

\begin{figure}[htbp!]
\centering
\includegraphics[scale=0.4]{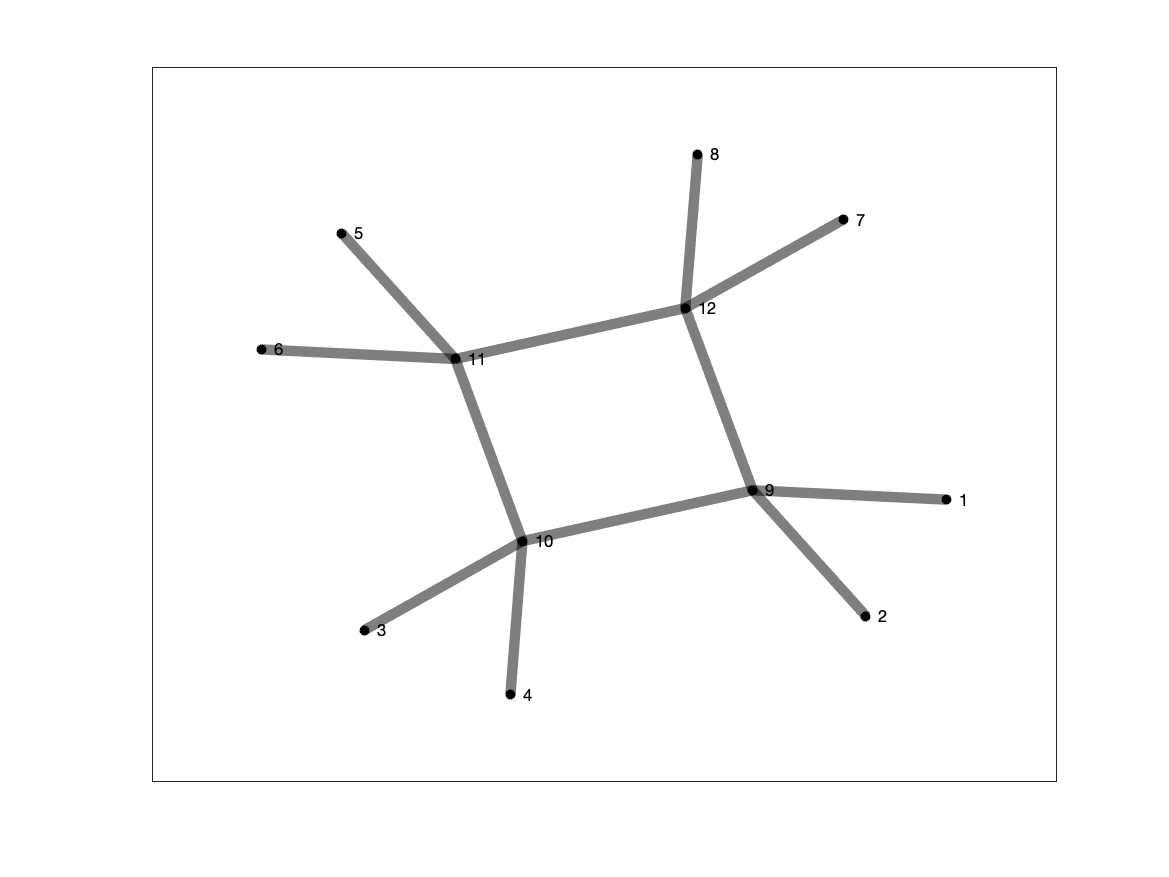}
\caption{Network 1 configuration  with 12 arcs and 12 nodes}
\label{Narc_network2}
\end{figure}

\begin{figure}[htbp!]
\centering
\begin{subfigure}[b]{0.45\textwidth}
\includegraphics[scale=0.4]{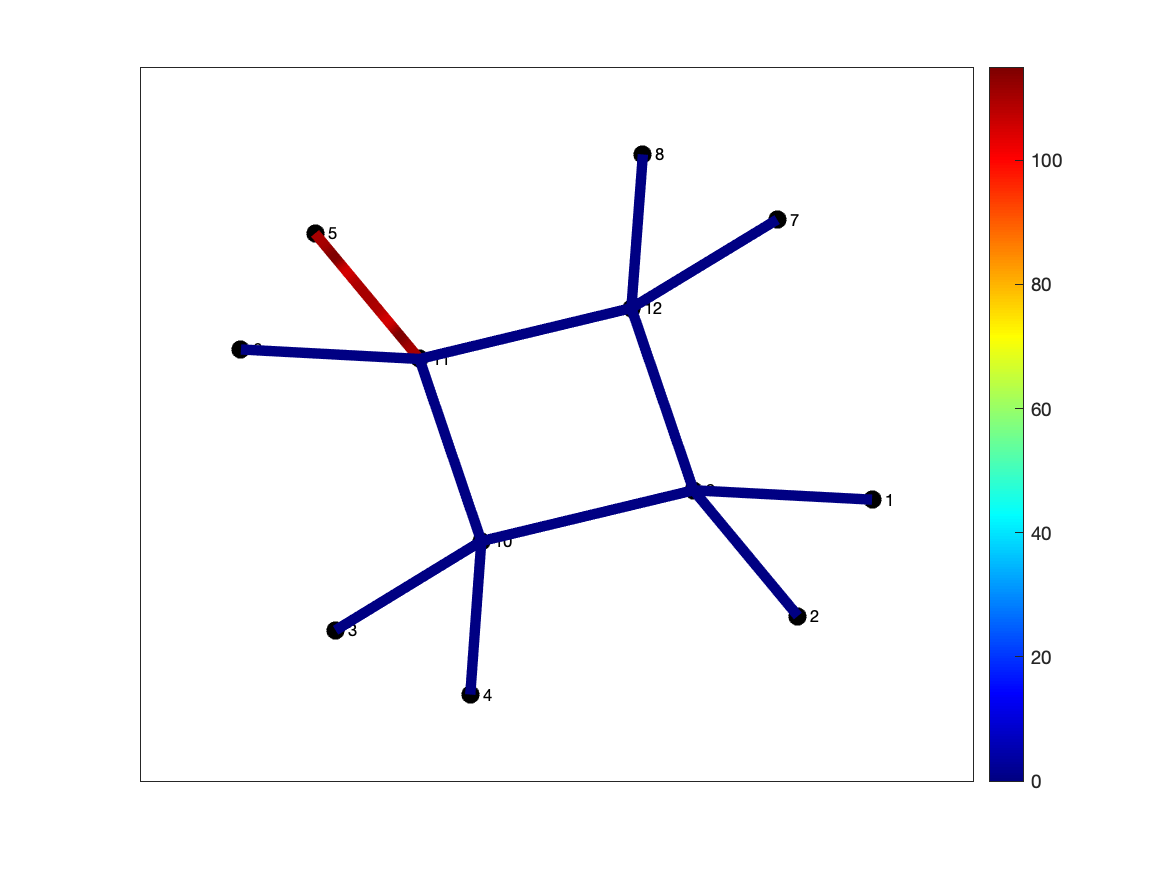}
\caption{initial condition for $\rho$}
\end{subfigure}
\begin{subfigure}[b]{0.45\textwidth}
\includegraphics[scale=0.4]{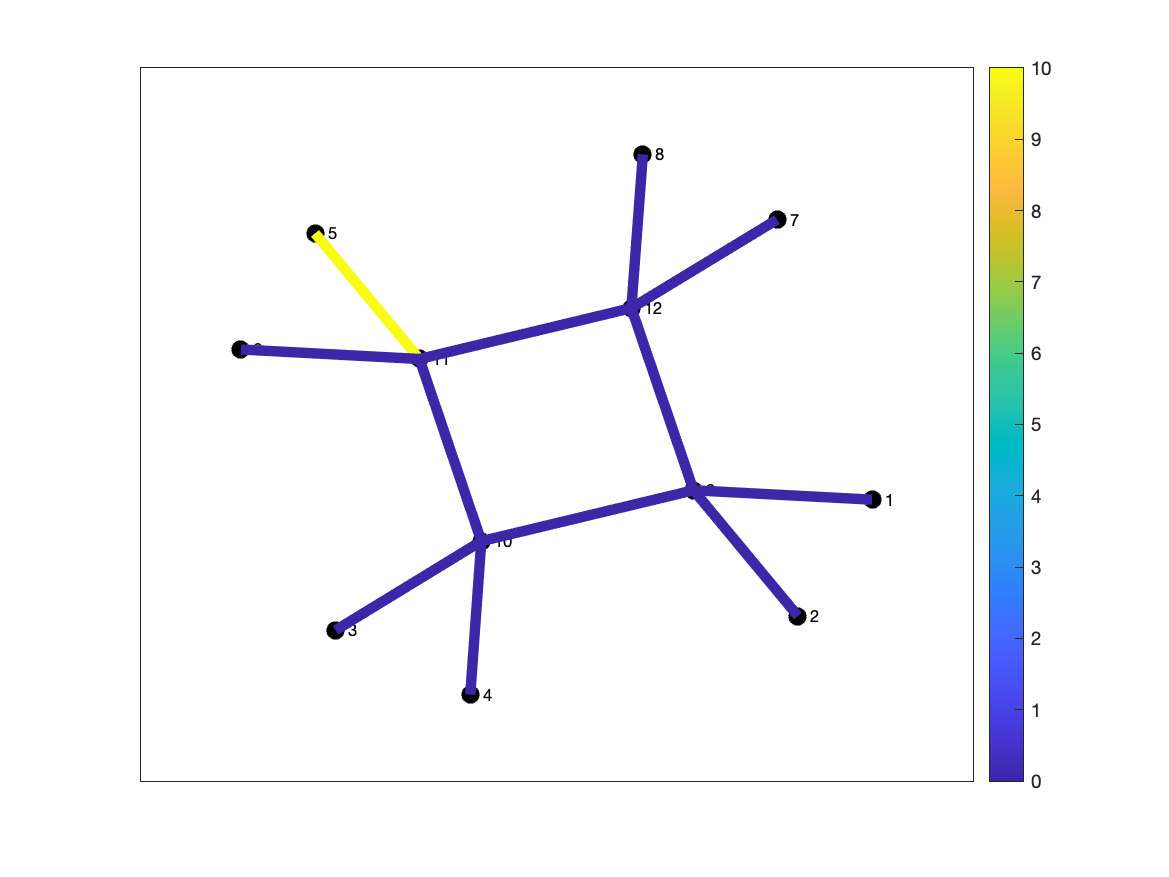}
\caption{initial condition for $q$}
\end{subfigure}
\begin{subfigure}[b]{0.45\textwidth}
\includegraphics[scale=0.4]{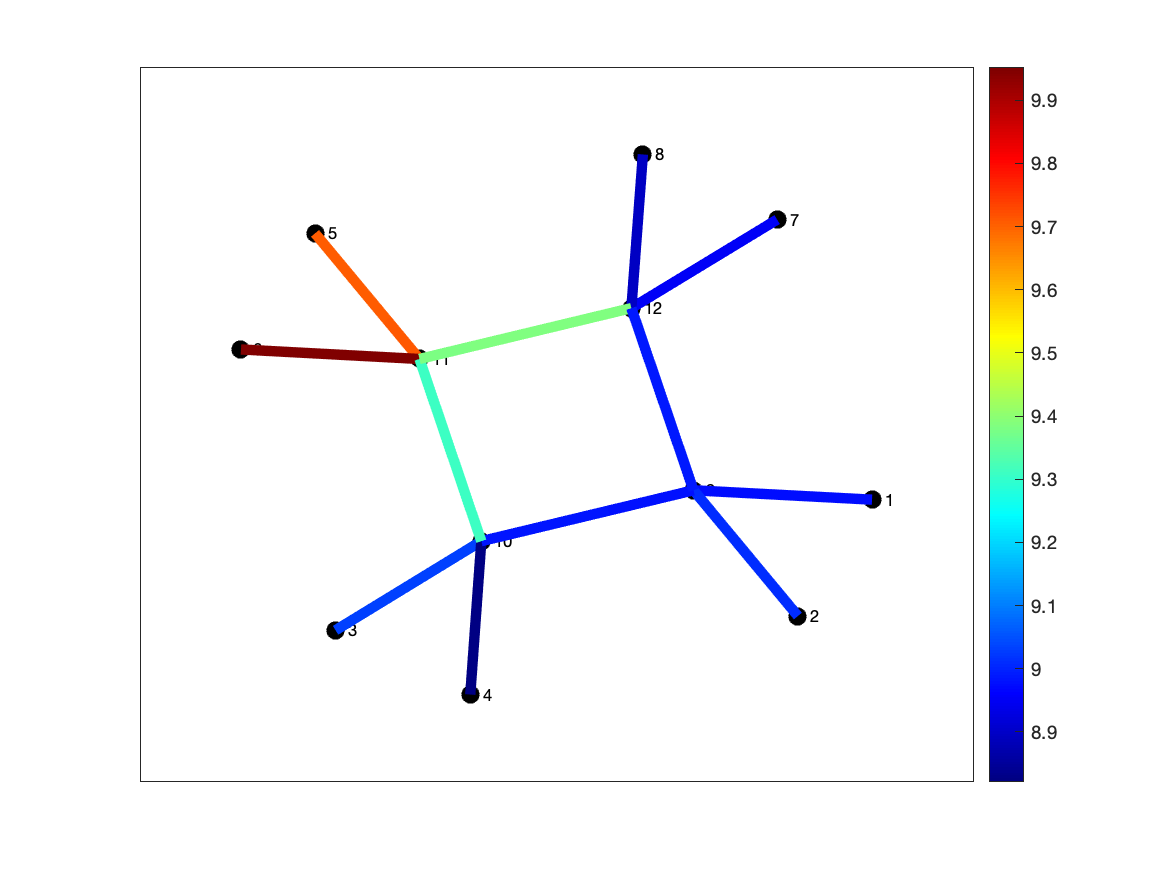}
\caption{Function $\rho$ at time $T=10$}
\end{subfigure}
\begin{subfigure}[b]{0.45\textwidth}
\includegraphics[scale=0.4]{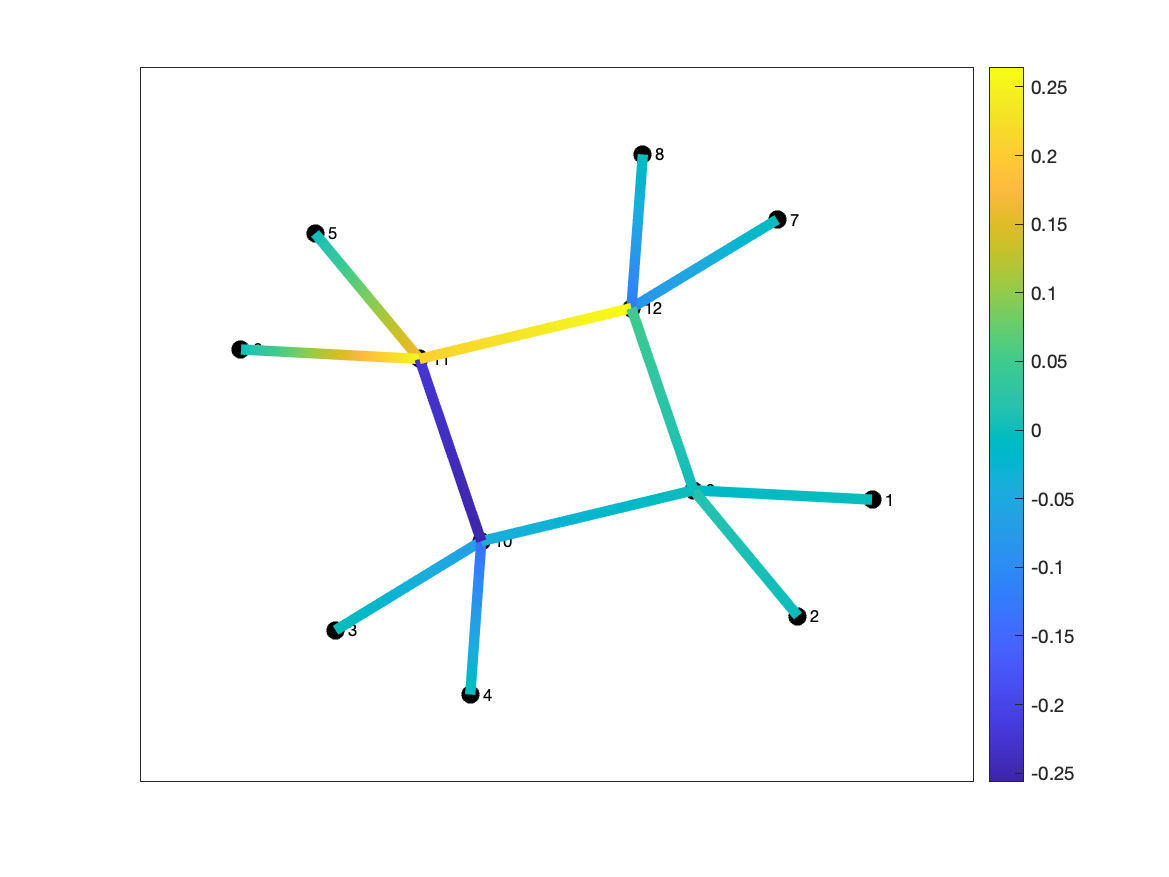}
\caption{Function $q$ at time $T=10$}
\end{subfigure}
\begin{subfigure}[b]{0.45\textwidth}
\includegraphics[scale=0.4]{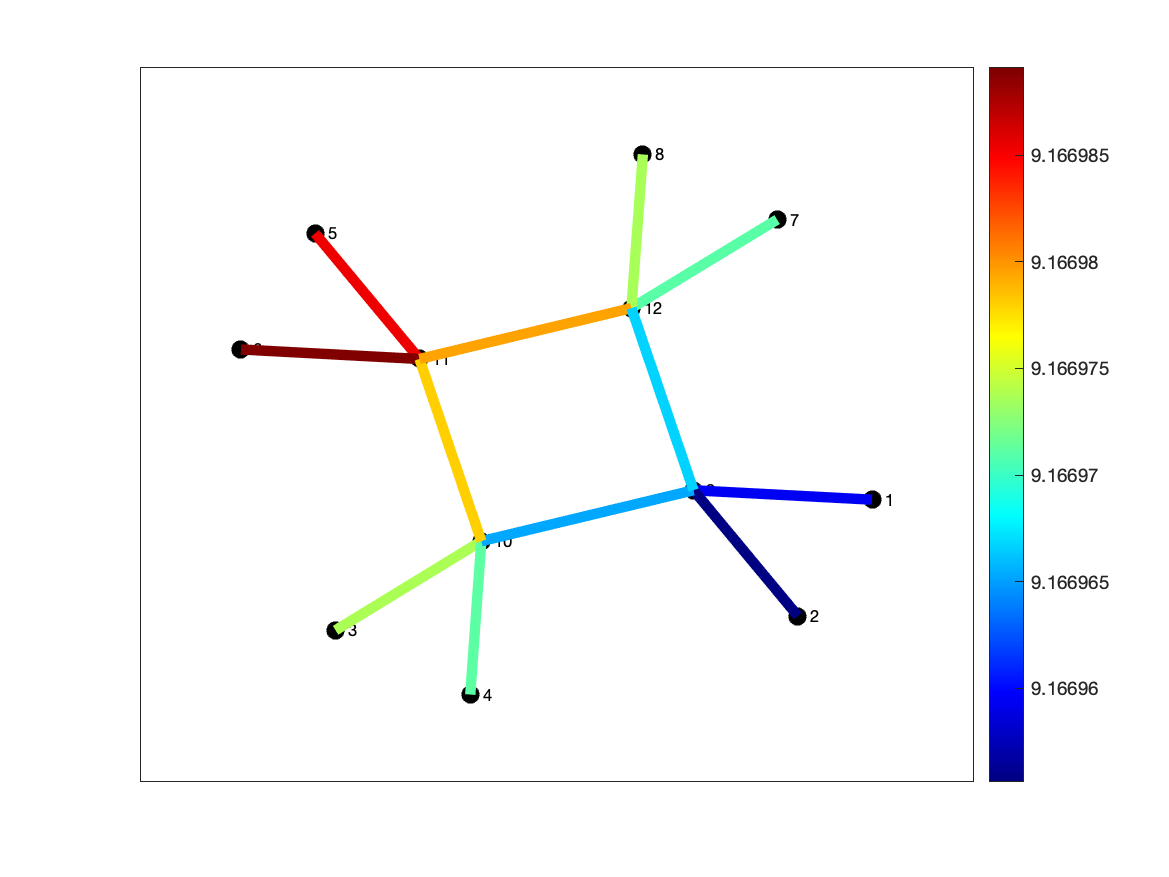}
\caption{Function $\rho$ at time $T=50$}
\end{subfigure}
\begin{subfigure}[b]{0.45\textwidth}
\includegraphics[scale=0.4]{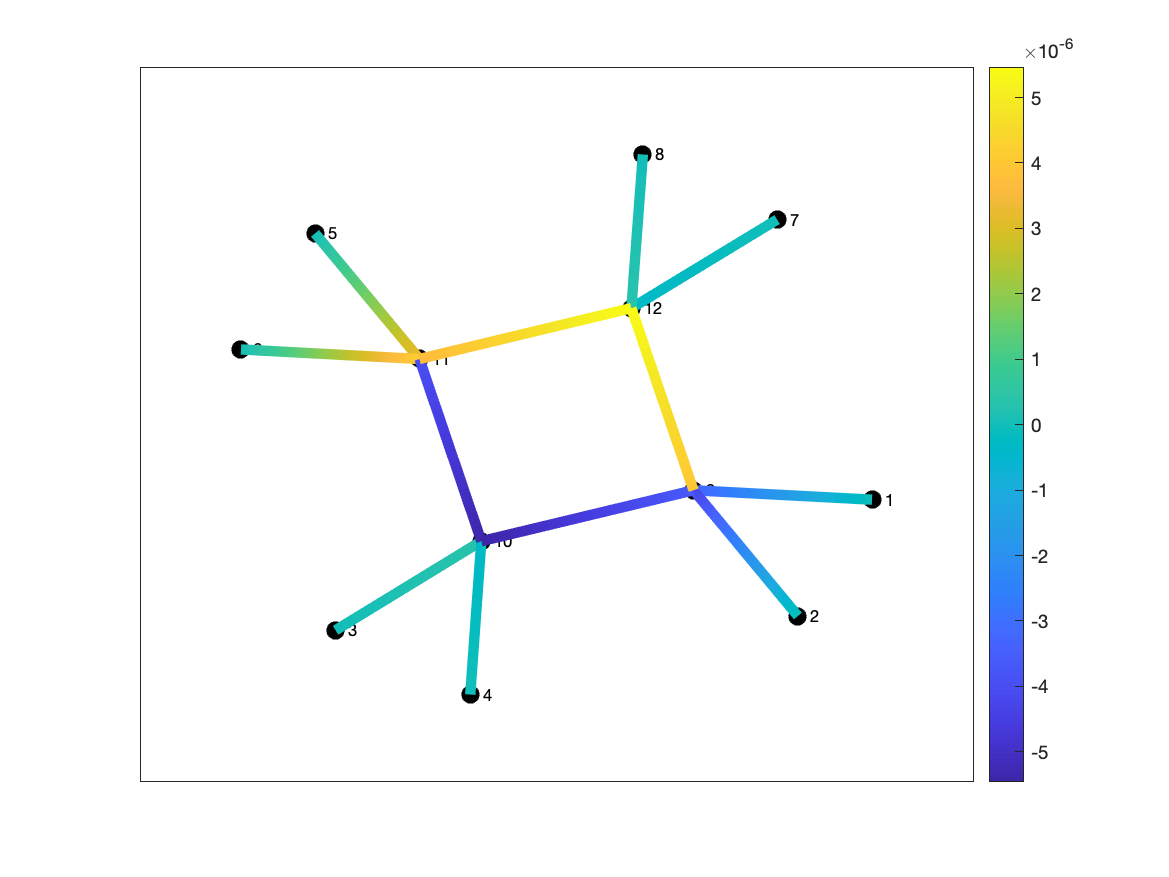}
\caption{Function $q$ at time $T=50$}
\end{subfigure}
\caption{Network 1: on top, initial condition; in the middle, solution at time $T=10$; on bottom, solution at time $T=50$. On the left, function $\rho$; on the right, function $q$.}
\label{Narc_test2}
\end{figure}

\item[Network 2]
We consider now a grid-shaped network with 26 arcs and 18 nodes, see \cite{BN18}~: 16 junction nodes and 2 outer nodes, see Figure\ref{Narc_network4}. The junction nodes are connecting to 2, 3 or 4 arcs, namely nodes inside the grid are connected to 4 nodes, nodes on the arcs of the grid to 3 nodes and nodes on the corners of the grids to 2 or 3 other nodes. Permeability coefficients  depend on the number of arcs at the node, more precisely if we denote by $N_{p}$ the number of arcs at node $p$, then   $\kappa_{i,j}=1/N_{p}$ for all $i \neq j$ and $0$ otherwise. The lengths of the arcs are also different and are equal to $L=10$ except $ L_{1}=L_{5}=L_{9}=L_{10}=L_{14}=L_{21}=L_{25}=L_{26}= 0.5$.

The initial density $\rho$ is a small perturbation on each arc of a constant equal to $0.45$ and the initial momentum $q$ is constant everywhere equal to $0.1$. 

At time $T=10$, we notice that positive values for $q$ are concentrated around the nodes of the network, whereas null or positive values of the momentum take place on the arcs. Regarding the density, we remark  the highest values on the arcs around one outer node of the network and the smallest values around the other outer node. 

At time $T=50$, the values for the density follow the same pattern as for $T=10$, but the values are more homogeneous all around the network. As for the momentum, it is null on each arc and $q$ have positive and negative values around the nodes of the networks, where the main changes occur. 

Finally at time $T=150$,  once again, the system converges   in large time towards a constant state for $\rho$ and $q$, but the convergence time is much longer than for the previous test cases, since the network is bigger
 see Figure\ref{Narc_test4}. The value of the density is slightly higher on the arcs with smaller lengths, see the red path on the figure.

\begin{figure}[htbp!]
\centering
\includegraphics[scale=0.4]{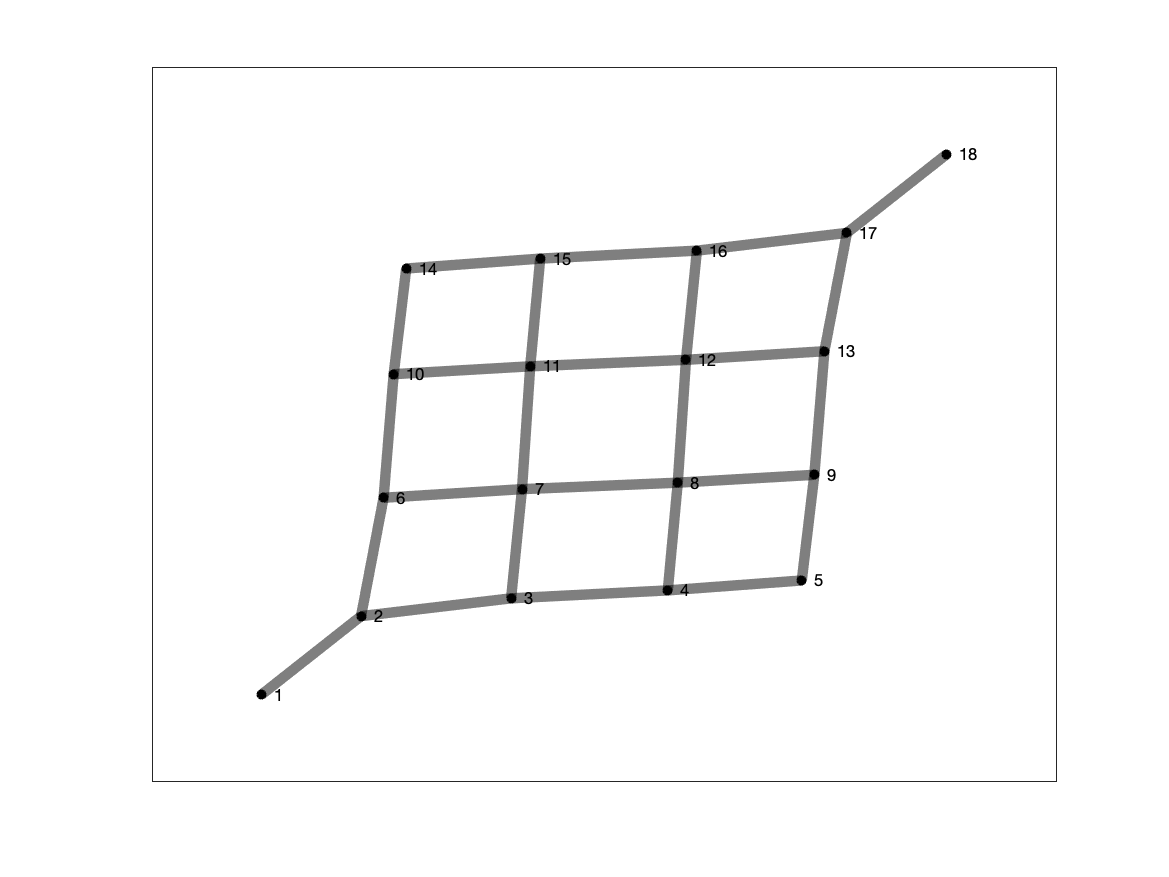}
\caption{Network 2 configuration with  26 arcs and 18 nodes; some of the arcs are smaller}
\label{Narc_network4}
\end{figure}

\begin{figure}[htbp!]
\centering
\begin{subfigure}[b]{0.45\textwidth}
\includegraphics[scale=0.35]{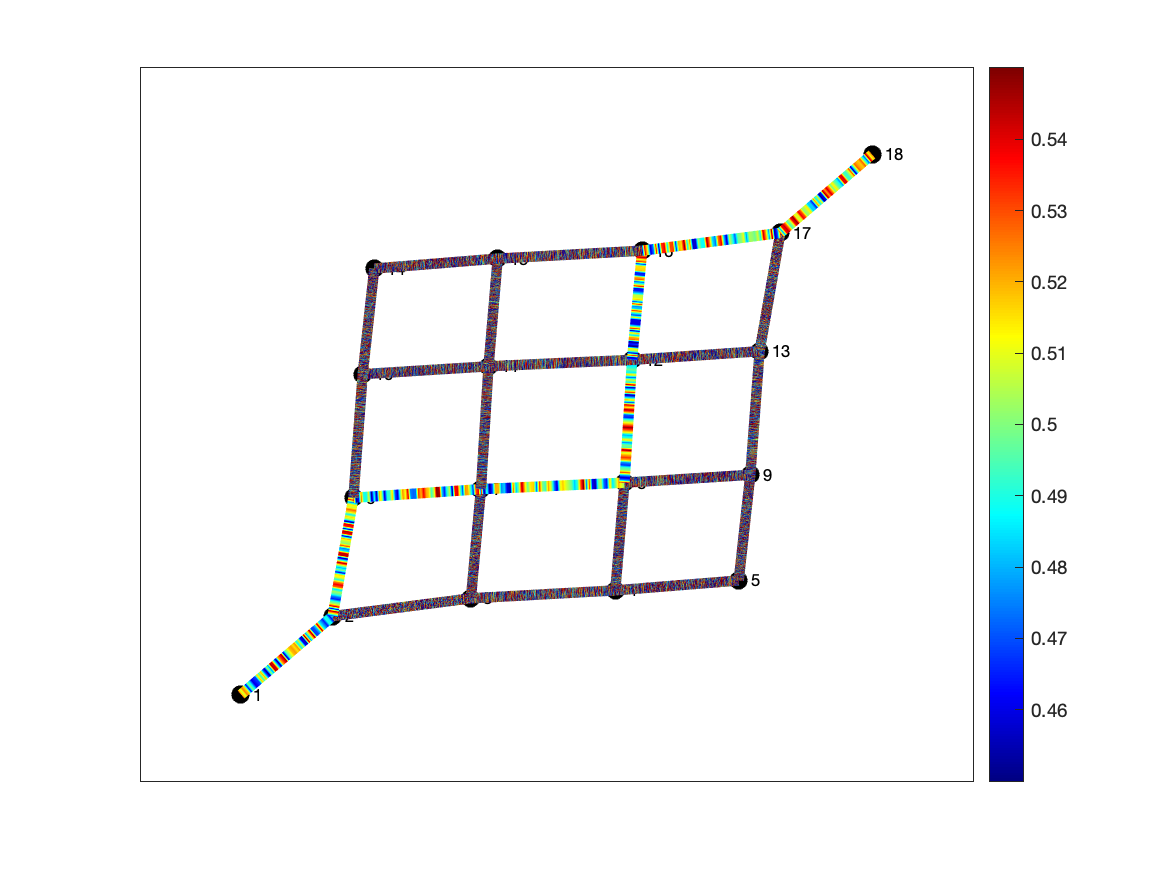}
\caption{initial condition for $\rho$}
\end{subfigure}
\begin{subfigure}[b]{0.45\textwidth}
\includegraphics[scale=0.35]{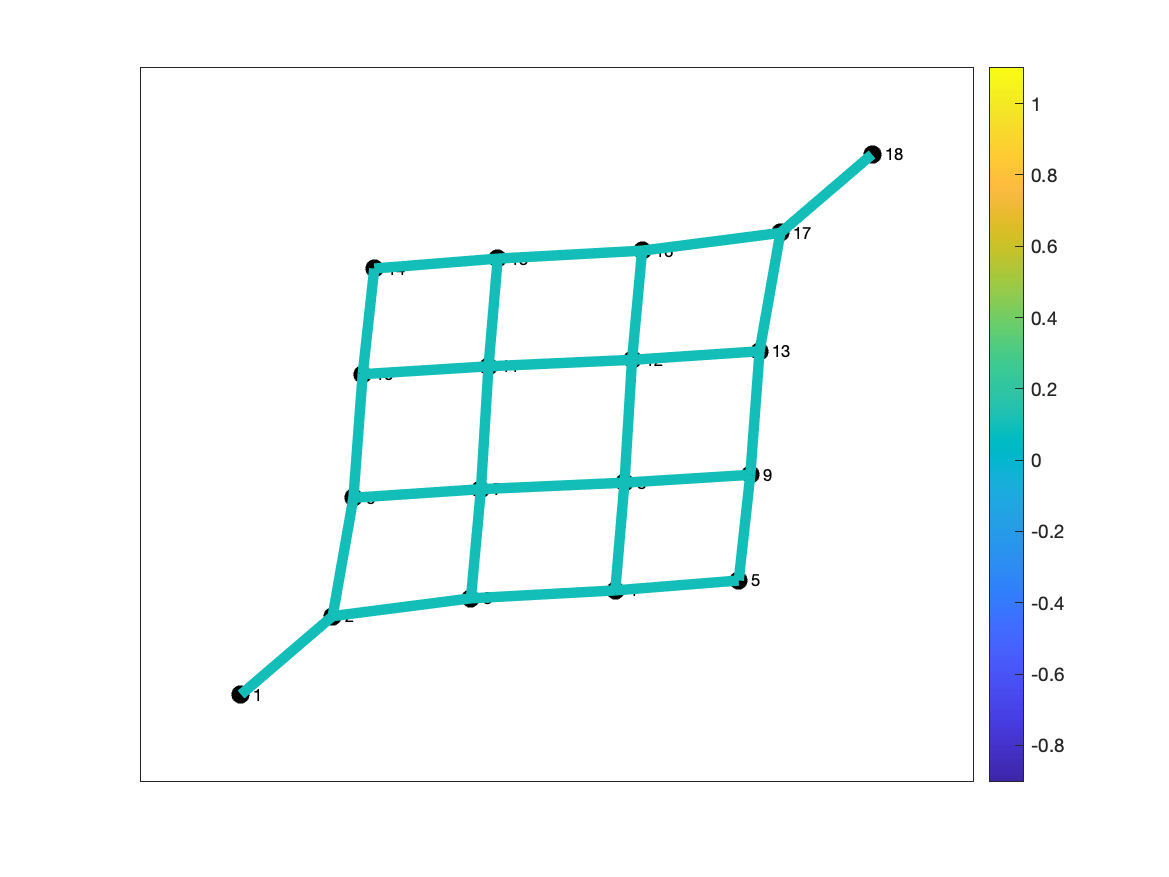}
\caption{initial condition for $q$}
\end{subfigure}
\begin{subfigure}[b]{0.45\textwidth}
\includegraphics[scale=0.35]{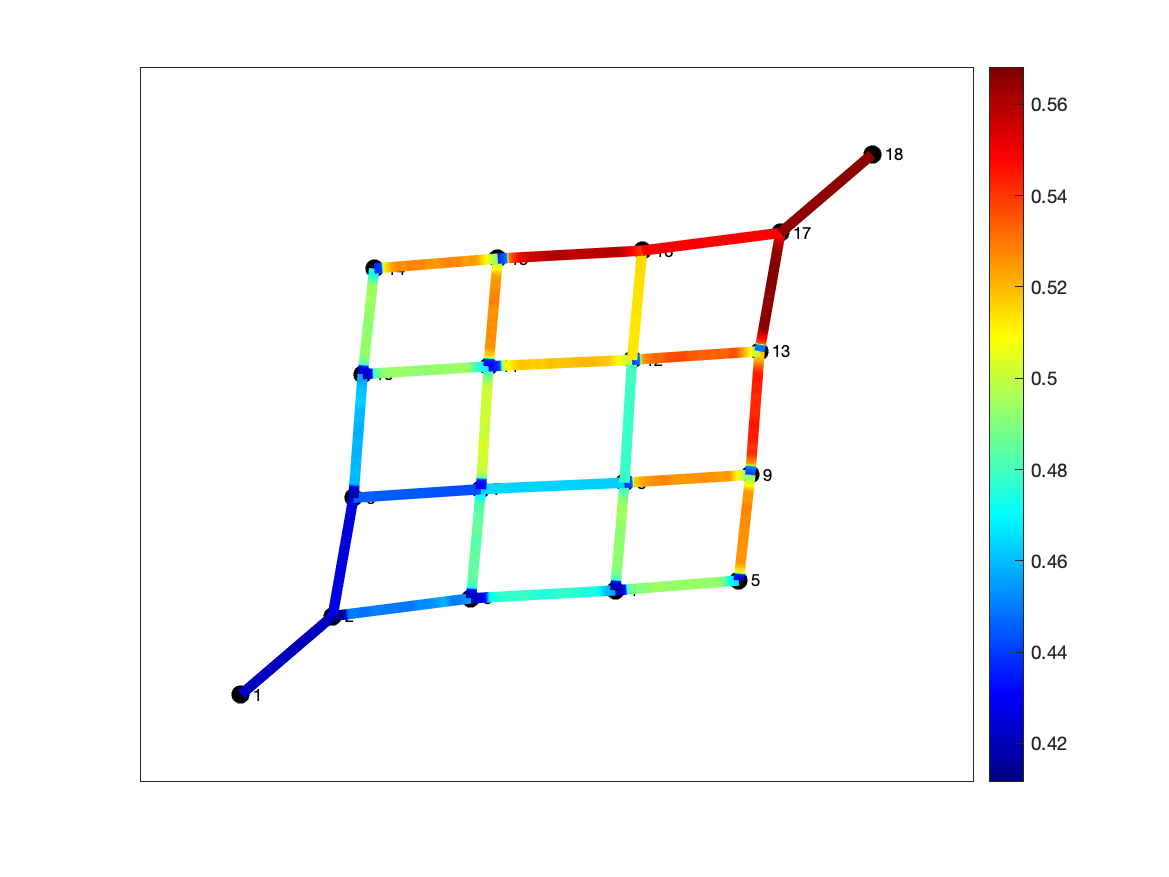}
\caption{Function $\rho$ at time $T=10$}
\end{subfigure}
\begin{subfigure}[b]{0.45\textwidth}
\includegraphics[scale=0.35]{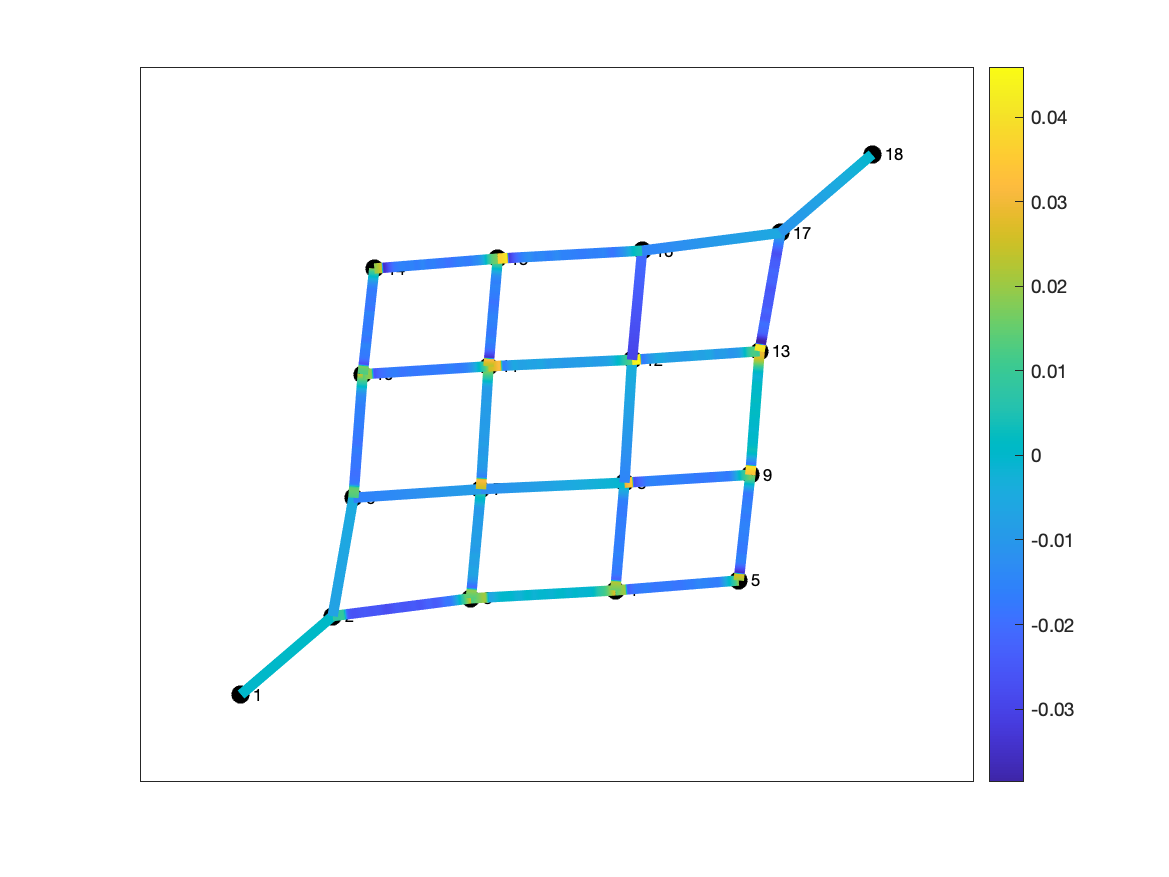}
\caption{Function $q$ at time $T=10$}
\end{subfigure}
\begin{subfigure}[b]{0.45\textwidth}
\includegraphics[scale=0.35]{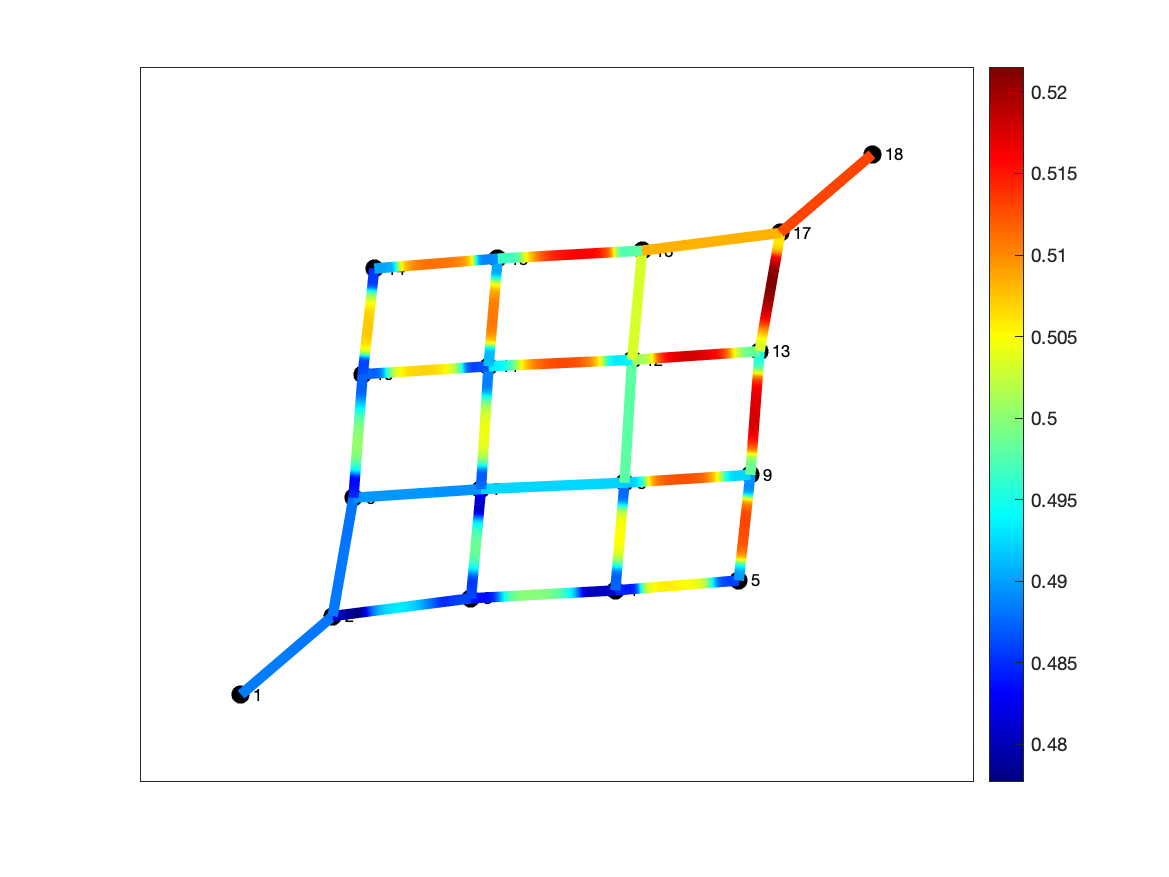}
\caption{Function $\rho$ at time $T=50$}
\end{subfigure}
\begin{subfigure}[b]{0.45\textwidth}
\includegraphics[scale=0.35]{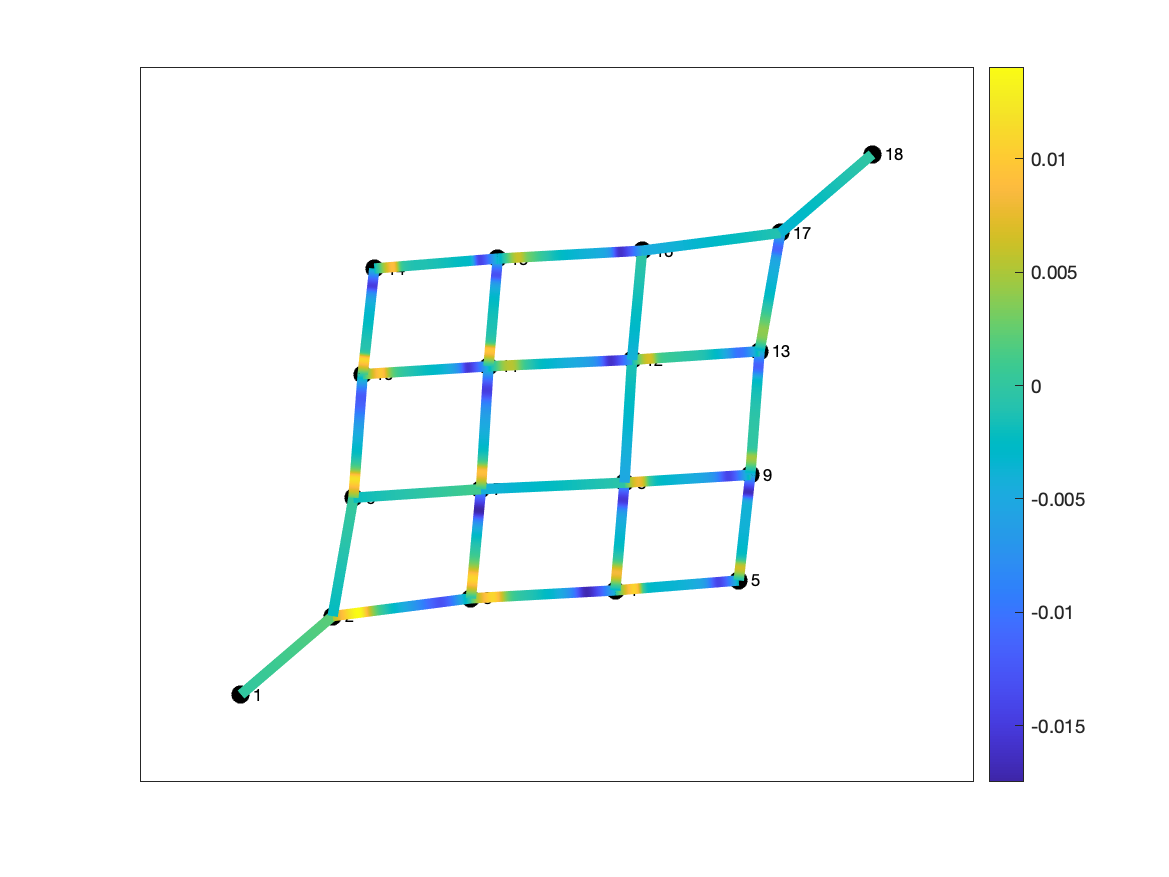}
\caption{Function $q$ at time $T=50$}
\end{subfigure}
\begin{subfigure}[b]{0.45\textwidth}
\includegraphics[scale=0.35]{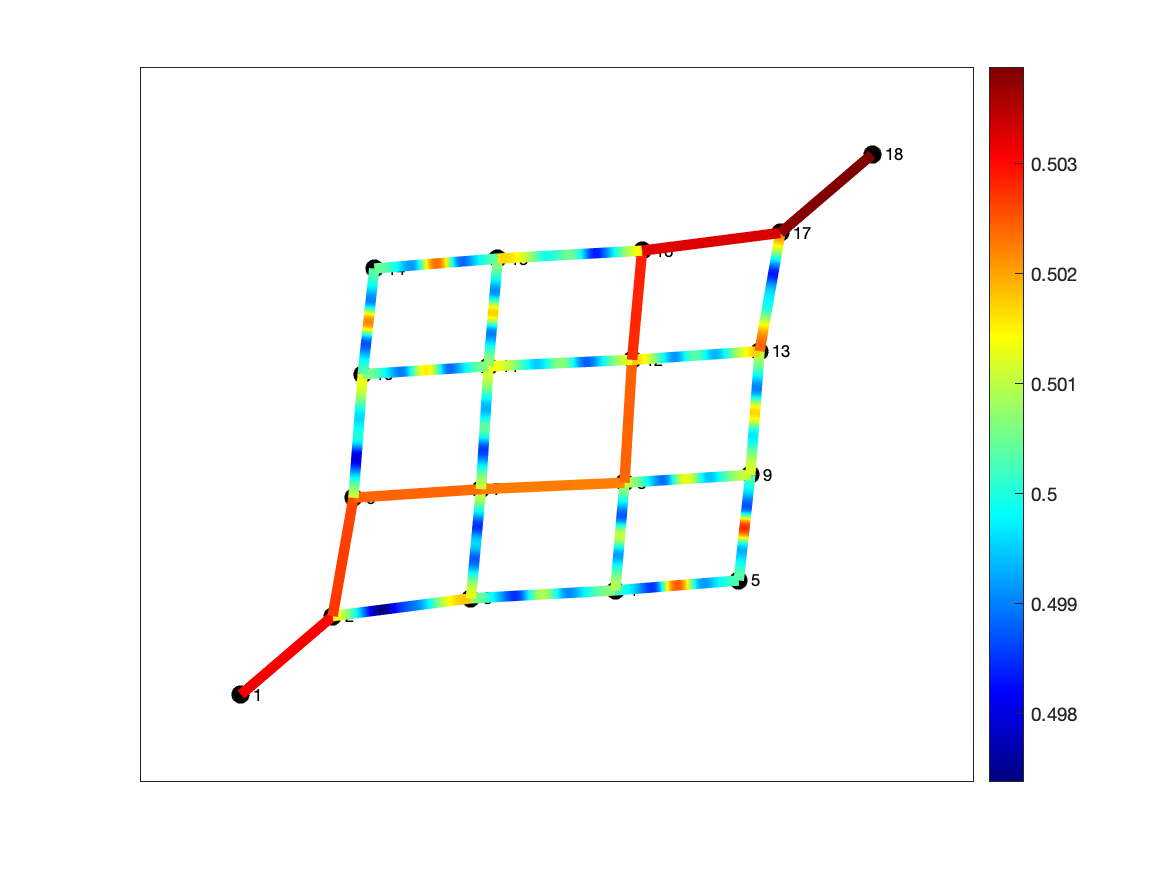}
\caption{Function $\rho$ at time $T=150$}
\end{subfigure}
\begin{subfigure}[b]{0.45\textwidth}
\includegraphics[scale=0.35]{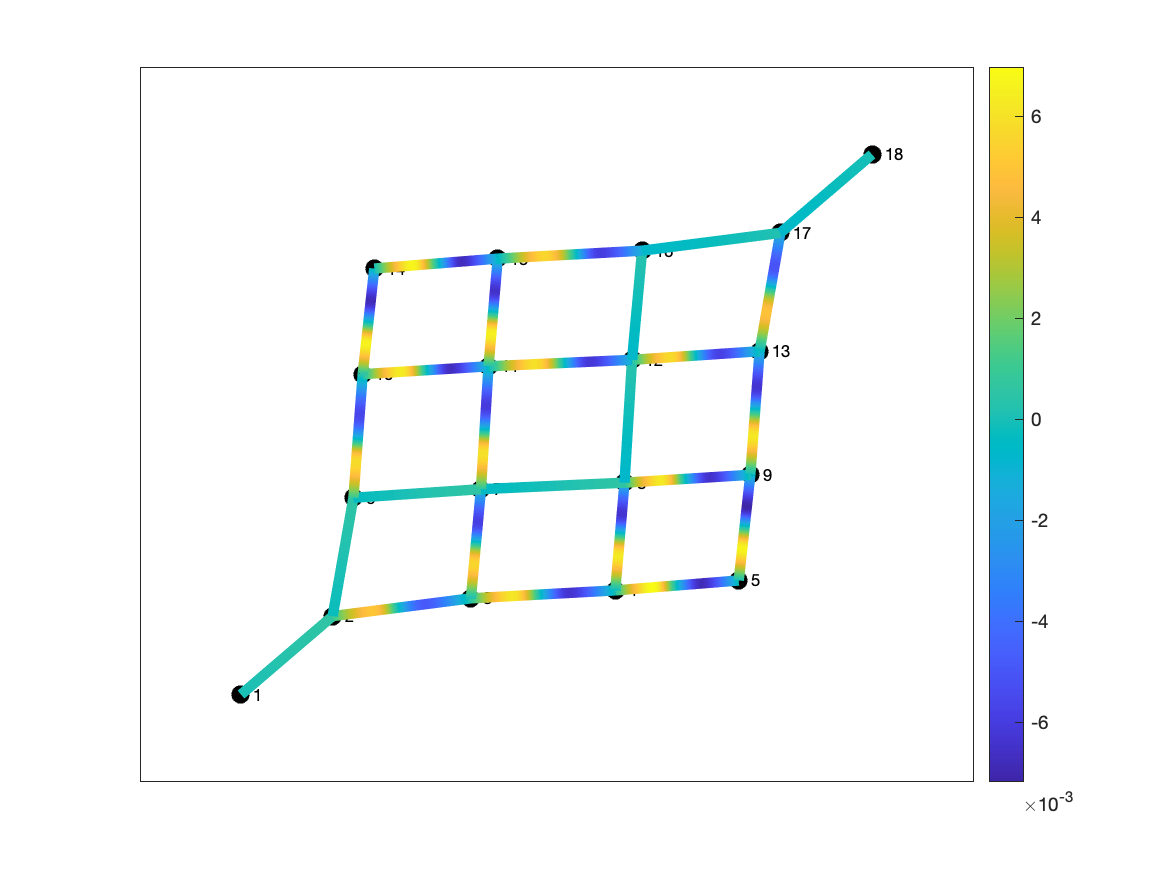}
\caption{Function $q$ at time $T=150$}
\end{subfigure}
\caption{Network 2:  on top, initial condition; in the second row, solution at time $T=10$;  in the third row, solution at time $T=50$;  on bottom, solution at time $T=150$. On the left, function $\rho$; on the right, function $q$.}
\label{Narc_test4}
\end{figure}

\end{description}

\end{document}